\newtheorem{theorem}{Theorem}[section]
\newtheorem{lemma}[theorem]{Lemma}
\newtheorem{corollary}[theorem]{Corollary}
\newtheorem{proposition}[theorem]{Proposition}
\newtheorem{fact}[theorem]{Fact}
\theoremstyle{definition}
\newtheorem{definition}[theorem]{Definition}
\newtheorem{remarks}[theorem]{Remarks}
\newtheorem{example}[theorem]{Example}
\DeclareMathOperator{\possf}{\mathsf{pos}}
\DeclareMathOperator{\negsf}{\mathsf{neg}}
\newcommand{\restrict}{\,{\mathbin{\vert\mkern-0.3mu\grave{}}}\,}
\newcommand{\remove}[1]{}
\DeclareMathOperator{\pow}{\rm pow}
\DeclareMathOperator{\interval}{[0,1]}
\DeclareMathOperator{\conv}{\rm conv}
\DeclareMathOperator{\cl}{\rm cl}
\DeclareMathOperator{\at}{\rm at}
\newcommand{\yesno}{\{0,1\}}
 \title[De Finetti for undergraduates]
{De Finetti for mathematics undergraduates} 
\author{Daniele Mundici}
\address[D. Mundici]{Department of
Mathematics and Computer Science  ``Ulisse Dini'' \\
University of Florence\\
Viale Morgagni 67/A \\
I-50134 Florence \\
Italy}
\email{daniele.mundici@unifi.it }
\date{}
\keywords{De Finetti consistency, de Finetti coherence,
Dutch Book theorem, de Finetti fundamental theorem,
of probability, Boole's problem on probabilistic
inference}
 \subjclass[2010]{Primary: 60A05. 
Secondary: 60A99.}
\begin{document}

\begin{abstract} In 1931  
 de Finetti proved  
 what is known as his  Dutch Book Theorem.
This  result  implies
  that the  finite additivity {\it axiom} 
for the probability
of the disjunction of two incompatible events 
becomes a {\it consequence}  of de Finetti's  logic-operational 
consistency notion. 
Working in the context of boolean algebras we prove
 de Finetti's 
theorem. 
The mathematical background required is little more than that which is taught in high school.
As a preliminary step we prove 
  what de Finetti called ``the Fundamental Theorem of Probability'',
his  main contribution both to  Boole's probabilistic inference problem 
and to its modern 
reformulation known as  the optimization version of the 
probabilistic satisfiability  problem.
In a final section 
 we give a self-contained combinatorial proof of
de Finetti's exchangeability theorem.
 \end{abstract}

\maketitle
\section*{Introduction}
In his 1931 paper \cite{def-fundmath}, 
 de Finetti introduced his celebrated consistency
 notion  (also known as ``coherence''), and proved  
 what today  is known as his  Dutch Book Theorem.
In \cite[$\S$ 16, page 328]{def-fundmath}
he summarized his results as follows
(italics by de Finetti):

\medskip
\begin{quote} 
{\small Dimostrate le propriet\'a fondamentali del 
calcolo classico delle probabilit\'a, ne scende che
tutti i risultati di tale calcolo non sono che
{\it conseguenze} della definizione che abbiamo data
della {\it coerenza}.

\medskip
\noindent
(Having proved the fundamental properties
of the classical probability calculus, it follows that
all its results are nothing else but
{\it consequences} of the definition of
{\it consistency} \/ given in this paper.)
}
 \end{quote}

\noindent
Indeed, a main consequence of his Dutch Book theorem
is  that the finite additivity {\it  axiom} 
for the probability
of the disjunction of two incompatible events becomes a 
{\it consequence} of  de Finetti's  
consistency\footnote{The  Italian adjective  ``coerente'' (resp., the French
``coh\'erent'')
adopted by  de Finetti in his 
original paper  \cite{def-fundmath}
(resp., in his paper \cite{def-poincare}),
is translated  ``consistent'' in the present paper---for
the sake of terminological consistency.
As a matter of fact, 
 ``logical consistency''
is translated ``coerenza logica'' in Italian.
Moreover,
Corollary \ref{corollary:justification}  shows
that de Finetti's (probabilistic) consistency
is a generalization of logical consistency.} 
notion. 
Working in the context of boolean algebras, we offer a
 self-contained   proof of de Finetti's 
theorem. As a preliminary step we prove 
  what de Finetti called ``the Fundamental Theorem of Probability'',
his  main contribution both to  Boole's probabilistic inference problem 
and to its modern 
reformulation known as  the optimization version of the 
probabilistic satisfiability  problem.

In the final part of this paper we will give a self-contained
proof of de Finetti's exchangeability theorem.

\section{De Finetti's  ``Fundamental Theorem of Probability''}
\label{chapter:fundamental}

\pagenumbering{arabic}

In his paper \cite{def-fundmath} de Finetti writes (in his own
italics):

\begin{quote}
\small {Un evento $E$  \`e una proposizione, un'affermazione, che non sappiamo ancora se sia vera o falsa

\medskip
An event $E$ is a proposition, a statement, which we do not yet know whether it is true or false

\hfill  De Finetti,  \cite[\S 7 p. 307]{def-fundmath}
}
\end{quote}

\index{de Finetti!event}

\medskip
\noindent
\begin{quote}
\small {
Un individuo \`e coerente nel valutare le probabilit\`a di certi eventi se qualunque gruppo di puntate $S_1, S_2,\dots , S_n$ un competitore faccia su un insieme qualunque di eventi $E_1,E_2,\dots, E_n$ fra quelli che egli ha considerato, non \`e possibile che il guadagno $G$ del competitore risulti {\it in ogni caso positivo}. 

\medskip
An individual is consistent in 
evaluating the probabilities of certain events 
if for any set of stakes  
$ S_1, S_2, \dots, S_n $ a competitor places
on any set of events 
$ E_1, E_2, \dots, E_n $ 
among those he has considered, it is not possible for the competitor's  $G$ gain to be {\it  positive in any case}.

\medskip
\hfill  De Finetti,  \cite[\S 7 p. 308]{def-fundmath}
}
\end{quote}

 \index{event}
\index{possible world}
\index{de Finetti!consistency notion}
\index{de Finetti!event}

\noindent
This expository style in presenting 
 two of the most basic notions of 
de Finetti's theory  differs from the 
style adopted in this paper. 
The next few pages will be devoted to the 
{\it definition} of events and their outcomes in the context  of
boolean algebras, the {\it motivation} of these definitions, and 
the {\it representation} of events  
in euclidean finite-dimensional space. 
In Section \ref{section:recap} we will 
give concrete examples of events and their outcomes
in our algebraic framework.

%
%

\subsection{Boolean algebras and their homomorphisms}
\label{section:boolean}

While groups  are a mathematical counterpart 
to  ``symmetries'', boolean algebras 
provide a rigorous approach to the imprecise notion
of  ``event''.  Furthermore, 
the homomorphisms of  boolean algebras into
the two-element boolean algebra
$\yesno$ provide a convenient formal counterpart  
to  the   ``possible outcomes'' 
of these events.

\index{logic}
\index{proposition}
\index{boolean!logic}

Boolean algebras also provide an algebraic
counterpart to ``propositions'' in boolean logic, equipped with
the connectives of
negation, conjunction and disjunction.  Although  logic is
not the subject of this paper, the reader will have
various opportunities to see the
 mutual relationships between these two interpretations
of boolean algebras.

%

\bigskip
\noindent
An {\it algebra} is a nonempty set equipped with distinguished
constants and operations.  

\medskip
Following standard practice,
the mathematical neologism ``iff'' stands for ``if and only if''.
Thus for example, an even number is prime iff it equals 2.
Sometimes  we write  
$ \Leftrightarrow$ instead of ``iff''. 
\index{iff, if and only if}
\index{$\Leftrightarrow$, if and only if}

\index{$\wedge$, the infimum or  meet operation} 
\index{$\vee$, the supremum or join operation}

\index{lattice}
\index{least upper bound}
\index{join}
\index{greatest lower bound}
\index{meet}
\index{distributive property}
\index{supremum}
\index{infimum}

\begin{definition}
{\rm A {\em lattice} 
is an algebra $L=(L,\wedge,\vee)$
equipped with a partial 
order such that  any
two elements  $x,y\in L$ have 
a greatest lower bound 
 (alias the {\em infimum}, or {\em meet})  $x\wedge y$
 and a least upper bound
(also known as the {\em supremum}, or {\em join}) $x\vee y$. 
We say that $L$ is {\em distributive}
 if for all $x,y,z\in L$,\,\,\, 
$$
\mbox{$x\wedge(y\vee z)=(x\wedge y)\vee(x\wedge z)$ \,\,\,and 
\,\,\,$x\vee(y\wedge z)=(x\vee y)\wedge(x\vee z)$.}
$$
The {\em underlying order}
 of $L$ is defined by the stipulation
$x\leq y$ iff  $x \wedge y=x.$

 \smallskip
A {\em boolean algebra} 
$A=(A,0,1,\neg, \wedge,\vee)$
  is a distributive lattice
$(A, \wedge,\vee)$ with a smallest element $0$
and a largest element $1$,  equipped with an operation $\neg$
(called {\em complementation})  
 satisfying  $x\wedge \neg x=0$
and $x\vee \neg x=1.$\footnote{All boolean
algebras in this paper will satisfy the nontriviality condition $0\not=1.$}  

An element \,$b\in A$ is said to be 
an {\em atom} 
if it is a nonzero minimal element, in symbols,
 $$b\in \at(A).$$
 }
 \end{definition}
 
 \index{boolean!algebra}
\index{atom}
\index{complementation}
\index{$\neg$, complementation, negation}
\index{underlying order of a boolean algebra}

 \index{$\at(A)$, the set of atoms of $A$}
 
 \begin{example}
 The two-element boolean algebra
$\yesno=(\yesno,\neg, \wedge, \vee)$  is defined
by  $\neg 0=1, \,\,\,\neg 1=0, \,\,\,
a\wedge b=\min(a,b),\,\,\,  a\vee b=\max(a,b)$ for all
$a,b\in \yesno.$

 \end{example}
 
  ``Events'' and their mutual relations
 will be understood as
elements of a boolean algebra
$A$  acted upon by the operations of $A$.

 \index{homomorphism}
 \index{isomorphism} 
 
\begin{definition} 
\label{definition:homomorphism}
{\rm  A {\em homomorphism}
  of a boolean algebra $A=
 (A,0,1,\neg,\wedge,\vee)$ into a boolean
 algebra $A'= (A',0',1',\neg',\wedge',\vee')$ is a 
 function $\eta\colon A\to A'$ 
 that preserves the complemented lattice structure:
 $$\eta(\neg x)=\neg'x, \,\,\,\eta(x\wedge y)
 = \eta(x)\wedge' \eta(y), 
\,\,\, \eta(x\vee y)=
 \eta(x)\vee' \eta(y).$$
It follows that $\eta(0)=0'$ and $\eta(1)=1'$.
  
An {\em isomorphism}
of $A$ onto $A'$ is a one-one function  $\theta$
of $A$ onto $A'$ such that both $\theta$ and its inverse are homomorphisms.
 }
\end{definition}

$$
\fbox{Until further notice, $A
=  (A,  0,1 ,\neg, \vee,\wedge)$ 
denotes  a finite boolean algebra
}
$$

\medskip
\noindent
This restriction will be removed in Section 
\ref{section:infinite}.

\smallskip
Elements $a,b$ of a boolean algebra $A$ are said to
be {\it incompatible} if $a\wedge b=0.$

 \index{dominates}
\index{$x$ dominates $y$,\,\,\,$x\geq y$}

\smallskip

\begin{proposition}
\label{proposition:atomic1} Let   $x,y\in A.$

\smallskip

(i) If $x\not=0$ then $x$ 
{\em dominates} some atom  $b \in \at(A)$.  In symbols, 
    $x\geq b$.

 \smallskip   
    (ii) For every $a\in \at(A)$ exactly one of
$a\leq x$ or $a\leq \neg x$  holds.
 
  \smallskip     
    (iii) $x\leq y$\,\, iff \,\,$x\wedge \neg y=0.$

 \smallskip      
    (iv)   If $x  < y$ then   $y\wedge  \neg x\not=0$.

 \smallskip      
    (v)   $\neg x$ is the
largest  element of $A$  incompatible 
  with $x$.\,\,\,\,Symmetrically, $\neg x$ is the
the smallest  element  $z\in A$ such that 
 $x\vee z=1$.
 
   \index{incompatible elements of a boolean algebra}
  \index{incompatibility}

 \smallskip
 (vi)
 For a function $\phi\colon A\to B$ to be an
isomorphism of boolean algebras $A$ and $B$,
  it is necessary and sufficient that
$\phi$ is onto $B$ and  
for all $x,y\in A$,
\begin{equation}
 \label{equation:ncs}
x\leq y\,\,\, \mbox{  iff  }\,\,\,  \phi(x)\leq \phi(y).
\end{equation}
 \end{proposition}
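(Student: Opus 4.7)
My plan is to dispatch (i)--(v) with short finite/combinatorial arguments using only the boolean axioms, and then to spend the bulk of the effort on (vi), which is the only part with real content.

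For (i), since $A$ is finite, the set $\{z\in A\mid 0<z\leq x\}$ is nonempty (it contains $x$), so it has a minimal element $b$; any strictly smaller nonzero $c$ would lie in this set by transitivity, so $b\in\at(A)$. For (ii), I would expand $a=a\wedge 1=a\wedge(x\vee\neg x)=(a\wedge x)\vee(a\wedge\neg x)$ using distributivity; since $a$ is an atom, each conjunct is either $0$ or $a$, and they cannot both be $0$ (else $a=0$) nor both equal $a$ (else $a\leq x\wedge\neg x=0$). For (iii), the forward direction uses monotonicity ($x\wedge\neg y\leq y\wedge\neg y=0$); the backward direction again uses distributivity: $x=x\wedge(y\vee\neg y)=(x\wedge y)\vee 0=x\wedge y$. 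Part (iv) is then immediate from (iii): if $y\wedge\neg x=0$ then $y\leq x$, contradicting $x<y$.

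For (v), I would apply (iii) with the involution $\neg\neg x=x$: the elements incompatible with $x$ are exactly the $z$ with $z\wedge\neg\neg x=0$, i.e.\ $z\leq\neg x$, so $\neg x$ is the maximum. The dual statement follows by replacing $x$ with $\neg x$ and using de Morgan, or directly: $x\vee z=1$ iff $\neg x\wedge\neg z=0$ iff $\neg x\leq z$.

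The substantive step is (vi). Necessity is trivial since any isomorphism preserves $\wedge$ and $0$, so $x\leq y\Leftrightarrow x\wedge y=x\Leftrightarrow\phi(x)\wedge\phi(y)=\phi(x)\Leftrightarrow\phi(x)\leq\phi(y)$. For sufficiency, I first note that (\ref{equation:ncs}) immediately makes $\phi$ injective (antisymmetry). Next I show $\phi(0)=0_B$ and $\phi(1)=1_B$ using surjectivity: every $w\in B$ equals some $\phi(y)$ with $0\leq y\leq 1$, so $\phi(0)\leq w\leq\phi(1)$. Then for $\wedge$, one direction $\phi(x\wedge y)\leq\phi(x)\wedge\phi(y)$ is pure monotonicity; for the reverse, take $z\in A$ with $\phi(z)=\phi(x)\wedge\phi(y)$ (here surjectivity is essential), deduce $z\leq x$ and $z\leq y$ via (\ref{equation:ncs}), hence $z\leq x\wedge y$ and therefore $\phi(x)\wedge\phi(y)=\phi(z)\leq\phi(x\wedge y)$. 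The case of $\vee$ is symmetric. Finally, preservation of $\neg$ drops out: from $\phi(x)\wedge\phi(\neg x)=\phi(0)=0_B$ and $\phi(x)\vee\phi(\neg x)=\phi(1)=1_B$, the element $\phi(\neg x)$ is a complement of $\phi(x)$ in $B$, and complements in a boolean algebra are unique, so $\phi(\neg x)=\neg\phi(x)$. The main obstacle I anticipate is the preservation of $\wedge$: one needs surjectivity in a non-obvious way to realize $\phi(x)\wedge\phi(y)$ as the image of an element of $A$ and then pull the inequality back via (\ref{equation:ncs}).
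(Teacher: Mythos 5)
Your proof is correct, and for parts (i)--(iv) it is essentially the paper's argument: finiteness of $A$ for (i), distributivity plus atomicity for (ii) and (iii), and (iii) for (iv). (Your (ii) is in fact slightly cleaner: expanding $a=(a\wedge x)\vee(a\wedge\neg x)$ and using minimality of the atom avoids the paper's detour through part (i).) The two places where you genuinely diverge are (v) and the negation step of (vi). For (v) the paper forms the supremum $s$ of all elements incompatible with $x$, checks by distributivity that $s$ is itself incompatible with $x$, and then rules out $s>\neg x$ via (iv); you instead observe that by (iii) and the involution $\neg\neg x=x$ the incompatible elements are exactly those $\leq\neg x$, which is shorter and makes (v) a literal corollary of (iii). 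For (vi) the paper asserts that $\phi$ and $\phi^{-1}$ preserve the lattice operations and then gets preservation of $\neg$ from the order-theoretic characterization in (v); you supply the missing details (surjectivity is indeed the key to $\phi(x)\wedge\phi(y)\leq\phi(x\wedge y)$, exactly as you anticipate) and then obtain $\phi(\neg x)=\neg\phi(x)$ from uniqueness of complements in a distributive lattice rather than from (v). Both routes are sound; yours leans on two standard facts the paper also uses tacitly ($\neg\neg x=x$ and uniqueness of complements, each an easy consequence of distributivity), and in exchange your (vi) is fully detailed where the paper's is telegraphic. One cosmetic remark: to match the paper's Definition \ref{definition:homomorphism} of isomorphism you should add the one-line observation that a bijective homomorphism has a homomorphic inverse (or apply your own argument symmetrically to $\phi^{-1}$, which also satisfies \eqref{equation:ncs}).
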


\begin{proof}  
  (i) If $x$ is an atom we are done. 
 If $x$ is not an atom,
  by definition of minimality there is a nonzero
 $x_1< x$.
 If $x_1$ is an atom we are done. Otherwise
 there is nonzero $x_2<x_1$. Since $A$ is finite, 
any such  chain  $x>x_1>x_2>\dots$  will end
 after finitely many steps
 with a nonzero minimal  element. 
 
 \smallskip
 (ii)
If (absurdum hypothesis) $a\leq x$  and $a\leq \neg x$ 
then by definition of infimum, 
 $a\leq x\wedge \neg x =0$ which contradicts $0\not= a\in
 \at(A)$. Thus {\it at most one} of the inequalities
$a\leq x$ and  $a\leq \neg x$ holds. To show that
{\it at least one} inequality holds, let us assume
 $a\nleq x$, i.e.,   $a\wedge  x <  a $,  with the intent
 of proving $a\leq \neg x$. 
 Since $a$ is minimal nonzero,
 \begin{equation}
\label{equation:crisi1} 
 a\wedge  x =0.
    \end{equation} 
Furthermore,  
\begin{equation}
\label{equation:crisi2}
 a\wedge  \neg x \not=0.
   \end{equation} 
For otherwise,  combining 
$ a\wedge  \neg x =0$ with condition  (\ref{equation:crisi1})
and   the distributivity property of $A$,  we obtain
$
   0=(a\wedge  x)   \vee (a\wedge \neg x) =
   a\wedge (x\vee \neg x)= a \wedge 1 = a,
$
   which is impossible.
 
 By   (i) and (\ref{equation:crisi2}) there is
 an atom $b\leq  a\wedge  \neg x $, i.e.,
 $b \leq \neg x$ and    $b\leq a$.
It follows that  $b=a$, whence $a\leq \neg x,$ as desired.

  \smallskip
 (iii)
 $(\Rightarrow)$  From the assumption 
 $x\wedge y=x$ we have $x\wedge \neg y=
 x  \wedge y \wedge \neg y=0.$\\
  $(\Leftarrow)$
  We first write
  $
  y\vee(\neg y\wedge x)=(y\vee \neg y)\wedge (y\vee x)=
  1\wedge (y\vee x)= y\vee x.
  $
  From our standing hypothesis  $x\wedge \neg y=0$ we obtain
  $y\vee x=  y\vee(\neg y\wedge x)=
y\vee 0=y$ whence  $y\geq x.$

  \smallskip
 (iv)  By (iii), 
  $x\not= y$
   iff  \, either $x\wedge \neg y\not =0$ \,\,or\,\, $y\wedge \neg x\not=0$.
Therefore,   $x  < y$ iff     
 $y\not= x$ and $x \leq y$ iff  
  (either $x\wedge \neg y\not =0$ or $y\wedge \neg x\not=0$) and
  $x\wedge \neg y=0$ iff 
  $y\wedge \neg x\not=0$ and 
  $x\wedge \neg y=0$,
  which implies  
  $y\wedge \neg x\not=0$.

  \smallskip
 (v) Distributivity ensures that the supremum $s$ of 
 (the finite set of) all elements of $A$ incompatible
 with $x$ is also incompatible with $x$.
Therefore,   $s$ is the greatest element   of $A$
 incompatible with $x$.  In particular, since 
  $\neg x$ is incompatible with $x$,
 $s\geq \neg x.$  
 By way of contradiction,  assume  $s >\neg x$.
 Then from (iv) we can write
\,\,\, $
0\not=s \wedge \neg\neg x = s \wedge x=0,
 $
which is impossible.  So $s=\neg x.$

The rest is proved similarly.

\smallskip
(vi)
 If $\phi$ is an isomorphism then it is onto $B$,
 and  both $\phi$ and its inverse
preserve the lattice-order structure, whence (\ref{equation:ncs})
is satisfied.
Conversely, assume  $\phi\colon A\to B$ 
 is  onto $B$ and satisfies 
(\ref{equation:ncs}). It follows that $\phi$ is one-one.
  Further,  both $\phi$ and 
$\phi^{-1}$ preserve the lattice operations.
  By the characterization of $\neg x$ in (v),
$\phi$ and 
$\phi^{-1}$  also preserve the $\neg$ operation.
  \hfill{$\Box$}
 \end{proof}
  The symbol \,\,$\Box$\,\, stands for the end of a proof.

\index{$\Box$, the end-of-proof symbol}
 \index{$\emptyset$, the empty set}

\smallskip
Let   $\emptyset$ be shorthand for the empty set.   
 For any boolean algebra $A$  let 
\begin{equation}
\label{equation:pow}
 \mbox{$
\pow(\at(A))\,\,\,\,\,\,\,\mbox{ (read: ``the powerset of $\at(A)$'')}
$
}
\end{equation}
be the boolean algebra of all subsets of $\at(A)$,
where $0=\emptyset, \,\,\,1 = \at(A)$
and for all  subsets  
$X,Y$ of $\at(A)$,
$$
X\vee Y=X\cup Y, \,\,\,\,\,\,X\wedge Y= X\cap Y,\,\,\,\,\,\,
 \neg X=\at(A)\setminus X = \mbox{ the complement of }X.
$$

\index{$\pow(X)$, the powerset  algebra of a set $X$}

\medskip

 \index{$\downarrow x$, the atoms dominated by $x$}
  
 \begin{proposition}
\label{proposition:sum-of-atoms}
  For any $x\in A$ let\,\,
$
  \downarrow x
  $
  be  the set of atoms  dominated by $x$,
  $$
  \downarrow x=\{a\in \at(A)\mid a\leq x\}.
  $$
We then have

\medskip
(i) 
  $\downarrow y=\emptyset$ iff $y=0.$
 If\,\, $ \downarrow x=\{b_1,\dots,b_l\}\not=\emptyset$
then  $x=b_1\vee\dots\vee b_l$.

\medskip
(ii) The function \,\, $\downarrow$\,\,  
is an isomorphism of $A$ onto   $\pow(\at(A))$.
 \end{proposition}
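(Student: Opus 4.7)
For (i), the first equivalence falls out of Proposition \ref{proposition:atomic1}(i): atoms are nonzero, so $\downarrow 0 = \emptyset$, while if $y \neq 0$ that proposition supplies an atom $b \leq y$. For the representation $x = b_1 \vee \cdots \vee b_l$ when $\downarrow x = \{b_1,\dots,b_l\} \neq \emptyset$, I set $s = b_1 \vee \cdots \vee b_l$. The inequality $s \leq x$ is immediate. To prove $x \leq s$, I argue by contradiction: Proposition \ref{proposition:atomic1}(iii) turns $x \not\leq s$ into $x \wedge \neg s \neq 0$, and Proposition \ref{proposition:atomic1}(i) then supplies an atom $c \leq x \wedge \neg s$. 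From $c \leq x$ we get $c \in \downarrow x$, so $c = b_i$ for some $i$ and hence $c \leq s$; combined with $c \leq \neg s$ this contradicts Proposition \ref{proposition:atomic1}(ii).

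For (ii), the strategy is to apply the isomorphism criterion Proposition \ref{proposition:atomic1}(vi): it suffices to show that $\downarrow$ maps $A$ onto $\pow(\at(A))$ and that $x \leq y$ iff $\downarrow x \subseteq \downarrow y$. Surjectivity: given $S = \{a_1,\dots,a_k\} \subseteq \at(A)$, the natural preimage is $x = a_1 \vee \cdots \vee a_k$ (with $x = 0$ when $S = \emptyset$). The inclusion $S \subseteq \downarrow x$ is immediate; the reverse inclusion is what I expect to be the main technical point. If $b \leq x$ is an atom, distributivity rewrites
$$b \,=\, b \wedge x \,=\, (b \wedge a_1) \vee \cdots \vee (b \wedge a_k).$$
Since $b \neq 0$, some $b \wedge a_i$ is nonzero; as $b \wedge a_i \leq a_i$ and $a_i$ is an atom, minimality forces $b \wedge a_i = a_i$, i.e.\ $a_i \leq b$, and a second appeal to minimality (of $b$) yields $b = a_i \in S$.

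For the order equivalence, the forward direction is just transitivity of $\leq$. For the converse, assume $\downarrow x \subseteq \downarrow y$: if $x = 0$ there is nothing to prove; otherwise part (i) writes $x$ as the join of its atoms, each of which lies in $\downarrow y$ and so is dominated by $y$, whence $x \leq y$. Thus $\downarrow$ meets the hypotheses of Proposition \ref{proposition:atomic1}(vi) and is the desired isomorphism. The only real obstacle in the whole argument is the distributivity computation in the surjectivity step, which together with the double appeal to atom-minimality pins down the identification $\downarrow(a_1 \vee \cdots \vee a_k) = \{a_1,\dots,a_k\}$.
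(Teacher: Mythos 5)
Your proof is correct and follows essentially the same route as the paper: both parts rest on Proposition \ref{proposition:atomic1} (atoms witnessing nonzero elements and failures of $\leq$), and part (ii) is settled via the isomorphism criterion of Proposition \ref{proposition:atomic1}(vi) by checking surjectivity together with the equivalence $x\leq y \Leftrightarrow\ \downarrow x\, \subseteq\, \downarrow y$. If anything, your distributivity computation establishing $\downarrow(a_1\vee\cdots\vee a_k)=\{a_1,\dots,a_k\}$ spells out a detail that the paper asserts as an immediate consequence of part (i).
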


 \begin{proof}  
 By    Proposition \ref{proposition:atomic1}(i),
 the  first statement is trivial.
For the second statement, by  definition of
 supremum,
  $ x\geq \bigvee_{i=1}^l b_i$.
 By way of contradiction,  assume   
 $x > \bigvee_{i=1}^l b_i.$ 
By Proposition     \ref{proposition:atomic1}(iv),\,\,\,
  $x\wedge \neg \bigvee_{i=1}^l b_i\not=0$,  whence
  by Proposition     \ref{proposition:atomic1}(i)
   some atom $b$ is dominated by
 both $x$ and $\neg \bigvee_{i=1}^l b_i.$  
 Since $\{b_1,\dots,b_l\}$ is the list of all atoms dominated
 by  $x$,  we may
 safely assume $b=b_1$. 
 Then $b_1\leq  \neg \bigvee_{i=1}^l b_i=\bigwedge_{i=1}^l \neg b_i$,
whence $b_1\leq \neg b_1.$ Thus 
 $b_1\leq b_1\wedge \neg b_1=0,$
which is impossible.

\medskip
(ii)   From (i) we have
 \begin{equation}
 \label{equation:downarrow}
      \mbox{  for any  } \{b_1,\dots,b_l\}\subseteq \at(A),\,\,\,\,\,\,\,
   \downarrow(b_1\vee\dots\vee b_l)=
   \{b_1,\dots,b_l\}.
   \end{equation}
Thus the  function $\downarrow$ is onto pow(at(A)).
In the light of    Proposition \ref{proposition:atomic1}(vi),
there remains to be proved  that 
$
\mbox{for all } x,y\in A,
 \,\,\,\,x\leq y\,\,\,\mbox{ iff }\,\,\,\downarrow x\,\subseteq\,\,\,   
  \downarrow y.
  $ 
 If   $x\leq y$   then every atom dominated
 by   $x$ is also dominated by   $y$, and hence, 
 $\downarrow x\,\,\,\subseteq\,\,\,   
  \downarrow y$. 
Conversely, if  $ x\nleq y$  then  $x\wedge \neg y\not=0$
  by  Proposition \ref{proposition:atomic1}(iii).
 Proposition \ref{proposition:atomic1}(i) yields
  an atom $a$ such that
$a\leq  x\wedge \neg y$.
Since  $a\leq \neg y$,   then
by Proposition \ref{proposition:atomic1}(ii),  
  $a\nleq y$, whence  $a\notin\,\, \downarrow y.$   
Since $a\leq x,$\,\,\,$a\in \,\,\downarrow x$. In conclusion, 
$\downarrow x\,\,\,\not= \,\,\,   
  \downarrow y.$
\hfill{$\Box$}
  \end{proof}
  
  \index{representation theorem}
 \index{theorem!powerset representation}

Part (ii) in Proposition 
\ref{proposition:sum-of-atoms}
is an  example of
a ``representation theorem''.
Its role is to give the reader a concrete realization of
  every finite boolean algebra $A$ as
  the boolean algebra of all subsets
  of    the set of atoms of $A$,
    equipped with union, intersection and complement.
 
\subsection{The geometry of finite boolean algebras in $\mathbb R^n$}
\label{section:geometry}

\index{geometry of finite boolean algebras}
\index{$\{0,1\}^n$, the vertices of the $n$-cube   $\interval^n$}
\index{$\interval^n$, the unit $n$-cube in $\mathbb R^n$}
\index{product!order}

As usual, $\mathbb R$ denotes the real line, $\mathbb R^2$
the cartesian plane, and 
 $\mathbb R^n$ the  $n$-dimensional euclidean space.
 We let $\interval^n$ denote
 the {\it unit $n$-cube} in $\mathbb R^n$. Then 
 $\{0,1\}^n$  is the set of vertices of $\interval^n.$
 The {\it standard basis vectors} of $\mathbb R^n$
 are denoted  $e_1,\dots,e_n.$
  Throughout this paper,  such basic facts as
 the linear independence of the basis vectors of $\mathbb R^n$
 will be used tacitly.

\index{the boolean algebra $\{0,1\}^n$}
 
\begin{definition} 
\label{definition:product-order}
{\rm The   {\em product order} 
 of  $\{0,1\}^n$  is defined by stipulating that 
 $x\leq y$ if and only if, whenever a coordinate $x_i$ of $x$
is $1$ then so is the  coordinate $y_i$ of $y$. 
}
\end{definition}

Geometrically,   $x <  y$ 
means that there is a path from $x$ to $y$ consisting of
steps along consecutive edges of the unit cube $\interval^n$,
where each step moves away  from the origin. 

\medskip
 A direct inspection yields:
 
   \index{standard basis vectors}
 \index{$e_1,\dots,e_n$, the standard 
 basis vectors of $\mathbb R^n$}

\begin{proposition}
\label{proposition:boolean-algebra}
The product order makes
  $\yesno^n$ into  a boolean algebra
  with  bottom element $0=$   the origin in $\mathbb R^n$ and
  top element  $1= (1,\dots,1)$.  For  any
$x,y \in \yesno^n$ the $i$th coordinate $(\neg x)_i$
of $\neg x$ equals  
$1-x_i$. Further,
$(x\wedge y)_i=\min(x_i,y_i)$ and
$(x\vee y)=\max(x_i,y_i).$  
 The atoms of $\yesno^n$ are the standard basis vectors
$e_1,\dots,e_n,$\,\,\,
$\at(\yesno^n)=\{e_1,\dots,e_n\}.$
\end{proposition}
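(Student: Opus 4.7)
The plan is to verify each boolean algebra axiom componentwise, using the fact that the two-element lattice $\{0,1\}$ already carries the min/max operations and the $1{-}(\cdot)$ complement. First I would check that Definition \ref{definition:product-order} does define a partial order on $\{0,1\}^n$: reflexivity, antisymmetry and transitivity all descend immediately from the order $0<1$ on $\{0,1\}$. Once this is in place, the partial order can be restated in the equivalent form $x\le y$ iff $x_i\le y_i$ for every coordinate $i$, which makes the rest of the verification transparent.

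Next, for fixed $x,y\in\{0,1\}^n$ I would show that the vector $m$ with $m_i=\min(x_i,y_i)$ is the greatest lower bound: $m\le x$ and $m\le y$ hold coordinatewise, and any $z\le x,y$ satisfies $z_i\le\min(x_i,y_i)$ in each slot, hence $z\le m$. A symmetric argument identifies $\max(x_i,y_i)$ as the supremum. The origin is the bottom element because $0\le y_i$ for all $i$, and $(1,\ldots,1)$ is the top element because $x_i\le 1$ for all $i$. Distributivity then reduces to the identity $\min(a,\max(b,c))=\max(\min(a,b),\min(a,c))$ with $a,b,c\in\{0,1\}$, which holds in any totally ordered set (or is seen by checking the eight cases). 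For the complement, I would define $(\neg x)_i=1-x_i$ and verify, coordinate by coordinate, that $\min(x_i,1-x_i)=0$ and $\max(x_i,1-x_i)=1$, so $x\wedge\neg x=0$ and $x\vee\neg x=1$ as required.

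It remains to identify the atoms. If $x=e_j$ for some $j$, then any $z\le e_j$ has $z_i=0$ for $i\ne j$ and $z_j\in\{0,1\}$, so $z$ is either $0$ or $e_j$ itself; thus $e_j$ is a minimal nonzero element. Conversely, if $x\ne 0$ has two coordinates $x_j=x_k=1$ with $j\ne k$, then $e_j$ is strictly below $x$ in the product order and nonzero, so $x$ is not minimal. Hence $\at(\yesno^n)=\{e_1,\ldots,e_n\}$, completing the proposition.

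There is no real obstacle here; the only point that requires a moment's thought is the equivalence between Definition \ref{definition:product-order} (phrased as an implication on $1$-valued coordinates) and the componentwise inequality $x_i\le y_i$ for all $i$, which is immediate once one observes that the hypothesis of the implication is vacuous when $x_i=0$. After this reformulation, every claim of the proposition reduces to a one-line verification in the two-element lattice $\{0,1\}$.
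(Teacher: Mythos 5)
Your proof is correct and is essentially the paper's approach: the paper dismisses this proposition with ``A direct inspection yields,'' and your coordinatewise verification (reformulating the product order as $x_i\leq y_i$ for all $i$, then checking meet, join, bounds, distributivity, complement, and atoms slot by slot in the two-element chain) is precisely that inspection written out in full. Nothing is missing.
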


\index{theorem!geometric representation of a finite boolean
algebra}

\begin{theorem}  
{\rm (Geometric representation 
of finite boolean algebras in $\mathbb R^n$)}
\label{theorem:geometric-isomorphism}
 With  $\{a_1,\dots,a_n\}=\at(A)$ and
$\{e_1,\dots,e_n\}$ the standard basis vectors in 
$\mathbb R^n$, 
let  the function
$\iota \colon A\to \yesno^n$  by defined  by
stipulating that  for all  $x\in A$,   
 \begin{equation}
  \label{equation:iota}
\iota(x)=\bigvee\{e_i\mid a_i\leq x\}
=\sum\{e_i\mid a_i\leq x\}\in  \mathbb R^n.
\end{equation}
Then $\iota$ is an isomorphism of 
$A$   onto    $\yesno^n$, 
$ {\iota}\colon 
A \cong \yesno^n.$
 \end{theorem}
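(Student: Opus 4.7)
The plan is to derive this geometric representation as essentially a relabelling of the powerset representation proved in Proposition \ref{proposition:sum-of-atoms}(ii), and then invoke the criterion for isomorphism from Proposition \ref{proposition:atomic1}(vi).

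First I would note that, by Proposition \ref{proposition:boolean-algebra}, $\{0,1\}^n$ is a boolean algebra whose atoms are precisely the standard basis vectors $e_1,\dots,e_n$, and that a vector $v\in\{0,1\}^n$ equals $\sum_{i\in S} e_i$ where $S=\{i\mid v_i=1\}$. Thus every element of $\{0,1\}^n$ is uniquely a join of atoms, in perfect parallel with Proposition \ref{proposition:sum-of-atoms}(i) applied to $A$.

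Next I would verify the two hypotheses of Proposition \ref{proposition:atomic1}(vi). For surjectivity: given any $v\in\{0,1\}^n$ with $S=\{i\mid v_i=1\}$, set $x=\bigvee_{i\in S} a_i\in A$ (interpreting the empty join as $0$). By Proposition \ref{proposition:sum-of-atoms}(i) we have $\downarrow x=\{a_i\mid i\in S\}$, so $\iota(x)=\sum_{i\in S} e_i = v$. For the order-equivalence: using Proposition \ref{proposition:sum-of-atoms}(ii), $x\le y$ in $A$ iff $\downarrow x\subseteq\downarrow y$, i.e.\ iff $\{i\mid a_i\le x\}\subseteq\{i\mid a_i\le y\}$; by definition of the product order on $\{0,1\}^n$ and the formula \eqref{equation:iota} defining $\iota$, this holds iff $\iota(x)\le \iota(y)$.

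With both conditions of Proposition \ref{proposition:atomic1}(vi) in hand, $\iota$ is an isomorphism of boolean algebras. The only point requiring a moment's care is the equality of the join $\bigvee\{e_i\mid a_i\le x\}$ and the arithmetic sum $\sum\{e_i\mid a_i\le x\}$ displayed in \eqref{equation:iota}: since the $e_i$ are distinct standard basis vectors, their coordinatewise maximum coincides with their coordinatewise sum, which is exactly the characteristic vector of $\{i\mid a_i\le x\}$. I do not foresee any real obstacle here; the whole proof is bookkeeping, with the conceptual content having already been supplied by the powerset representation theorem.
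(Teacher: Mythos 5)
Your proposal is correct and follows essentially the same route as the paper: surjectivity via Proposition \ref{proposition:sum-of-atoms}(i) applied to the join of the atoms indexed by the support of $v$, the order-equivalence via $\downarrow x\subseteq\downarrow y$, and the conclusion via the isomorphism criterion of Proposition \ref{proposition:atomic1}(vi). The only cosmetic difference is that the paper also remarks separately that injectivity follows from the linear independence of the $e_i$, which your argument subsumes in the order-equivalence step.
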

 
 \begin{proof} By  (\ref{equation:downarrow}) and Proposition
  \ref{proposition:sum-of-atoms},\,\,the linear
  independence of the standard basis vectors
  ensures that  $\iota$  is one-one.
  To prove that every $v\in \yesno^n$  is in
  the range of  $\iota$,  let us write 
  \begin{equation}
  \label{equation:secchieta}
 v=e_{n_1}+\dots+e_{n_k}
 \end{equation}
 for a unique subset 
$\{e_{n_1},\dots,e_{n_k}\}$ of
$\{e_1,\dots,e_n\}$. Let $\{a_{n_1},\dots,a_{n_k}\}$ 
be the corresponding set of atoms of $A$, and 
$x_v=a_{n_1}\vee\dots\vee a_{n_k}$. 
By Proposition \ref{proposition:sum-of-atoms}(i),\,\,\,
$\downarrow x_v = \{a_{n_1},\dots,a_{n_k}\}$.
By (\ref{equation:iota}),\,\,
$\iota(x_v)=v,$
which shows that the function $\iota$ is onto
$\yesno^n.$

In view of  Proposition
 \ref{proposition:atomic1}(vi),
there remains to be proved that for all
$x,y\in A$\,\,\, $x\leq y\Leftrightarrow \iota(x)\leq
\iota(y).$  By  Definition \ref{definition:product-order}
and (\ref{equation:iota}),
 for all $i=1,\dots n$ we have
%
$$e_i\leq \iota(x)\Leftrightarrow a_i\leq x. $$
%
As a consequence, 
\begin{eqnarray*}
x\leq y
&\Leftrightarrow&
\,\downarrow x\,  \subseteq\,\,  \downarrow y,
\mbox{\,\, by Proposition \ref{proposition:sum-of-atoms}(ii)} \\
{}& \Leftrightarrow &
\mbox{for all }\,\, a\in \at(A)\,\, \mbox{ such that }  \,\,a\leq x
\,\,\mbox{ we have  }\,\,a\leq y\\
{}&\Leftrightarrow&
\mbox{for all}\,\,  e \in \at(\yesno^n) \mbox{ such that } e\leq \iota(x)
 \mbox{ we have } e \leq \iota(y)\\
{}& \Leftrightarrow&
\iota(x)\leq \iota(y).  \quad\quad\quad\quad\quad
\quad\quad\quad\quad\quad\quad\quad
\quad\quad\quad\quad\quad\quad\quad
\quad\quad\quad\quad\,\,\,\, \mbox{\hfill{$\Box$}}
\end{eqnarray*}
 \end{proof}
 
\index{transpose} 
 
 
 For every  (always column) vector $v\in \mathbb R^n$ we
let $v^T$  denote its transpose. Thus,  e.g., if
$v$ is the column vector   $x \choose y$
 then $v^T=(x,y)$.
For any vector $w$ in $\mathbb R^n$ we let 
$
v^T w 
$
denote the matrix multiplication of $v^T$ and $w$,
i.e., their scalar product in $\mathbb R^n.$

\smallskip
For any two (possibly infinite) sets  $X$ and $Y$, 
by a {\it one-one correspondence} between $X$ and $Y$
we mean an injective function $f\colon X\to Y$ 
with $f(X)=Y.$

 \index{column vector}
 \index{$v^T$, the transpose of $v$}
 \index{$v^T w$, the scalar product of $v$ and $w$}
 \index{scalar product} 
   \index{possible outcome of an event}
 
\smallskip
 We use the notation
$$
\hom(A)
$$
for  the set of  homomorphisms 
of $A$ into the two-element 
 boolean algebra
 $\yesno.$ 
It follows that any   $\eta\in \hom(A)$   satisfies the identities 
 $$\eta(\neg x)=1-\eta(x),\,\,\, \eta(x\wedge y)
 =\min(\eta(x),\eta(y)), 
\,\,\, \eta(x\vee y)= \max(\eta(x),\eta(y)).
$$
Intuitively, the  homomorphisms of $A$
into  $\yesno=\{no, yes\}$
are  the  ``possible outcomes'' of  the ``events''
(i.e., the elements) of $A$.
 
 The most general   example of a homomorphism of
   $A$ into $\yesno$ is given by the following result,
    which in general
   fails when $A$ is an infinite boolean algebra:
  
  \begin{corollary}
\label{corollary:tutto}
{\rm (Geometric representation of $\hom(A)$)}
For any  $A$ we have:

\smallskip
(i)  Let   the function
 $
a  \mapsto  \eta_a 
$
send  every atom $a$ into the  function 
$\eta_a\colon A\to \yesno$  such that
for all $x\in A$,
$
\eta_a(x)=1\,\,\, \mbox{ iff }\,\,\,  a\leq x.
$
 Then  $a\mapsto  \eta_a$
 is a one-one correspondence between $\at(A)$ and  
$\hom(A)$.
 The inverse correspondence sends 
any  $\eta\in \hom(A)$ to the only atom $a_\eta$ of
$A$ such that $\eta(a_\eta)=1.$

 \index{$\hom(A),$ the homomorphisms of $A$  into  $\yesno$} 

\medskip
(ii)
Let  $\{e_1,\dots,e_n\}=\at(\yesno^n)$.
For each $i=1,\dots,n$ let  the function
$\theta_{e_i}\colon \yesno^n\to \yesno$
be defined by  stipulating that for every  $v\in \yesno^n$ 
\begin{equation}
\label{equation:theta}
  \theta_{e_i}(v)= 1\,\,\,\mbox{   iff   }\,\,\, e_i\leq v
  \,\,\,\,\mbox{ in the product order    of }\yesno^n.
\end{equation}
Then the function  $e_i\mapsto \theta_{e_i},
\,\,\,(i=1,\dots,n)$
  is a one-one correspondence 
  between   $\at(\yesno^n)$ and 
$\hom(\yesno^n)$.  
For  any atom   $e$  of $ \yesno^n$,\,\,\,$\theta_e$ 
is the only homomorphism   of $\hom(\yesno^n)$
  such that $\theta_{e}(e)=1.$
The inverse correspondence sends
 every $\theta\in \hom(\yesno^n)$
to the uniquely determined standard basis vector 
$e_\theta \in \at(\yesno^n) \subseteq \mathbb R^n$ such
that $\theta(e_\theta)=1.$

\medskip
(iii)
For   every  $v\in \yesno^n$ and \,\,$i=1,\dots,n$, 
\begin{equation}
\label{equation:scalar}
\theta_{e_i}(v)=e_i^T v = \mbox{$i$th coordinate of v}.
\end{equation}
\end{corollary}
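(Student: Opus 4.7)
The plan is to prove (i) first (the general case), then obtain (ii) as an immediate specialization to $A=\yesno^n$, and finally derive (iii) by unwinding the definition of the product order.

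For (i), I would begin by verifying that each map $\eta_a$ defined by $\eta_a(x)=1\Leftrightarrow a\leq x$ is genuinely a homomorphism. Preservation of $\neg$ is immediate from Proposition \ref{proposition:atomic1}(ii), which guarantees that exactly one of $a\leq x$ and $a\leq \neg x$ holds, so $\eta_a(\neg x)=1-\eta_a(x)$. Preservation of $\wedge$ reduces to the elementary equivalence $a\leq x\wedge y\Leftrightarrow (a\leq x$ and $a\leq y)$. For $\vee$, one direction is trivial; in the reverse direction, if $a\nleq x$ and $a\nleq y$ then Proposition \ref{proposition:atomic1}(ii) yields $a\leq\neg x$ and $a\leq\neg y$, hence $a\leq\neg x\wedge \neg y=\neg(x\vee y)$, which forces $a\nleq x\vee y$.

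Next I would establish the bijection. For injectivity, if $\eta_a=\eta_b$ then $\eta_b(a)=\eta_a(a)=1$, so $b\leq a$, and since both are atoms $b=a$. For surjectivity, fix $\eta\in\hom(A)$. By Proposition \ref{proposition:sum-of-atoms}(i) we have $1=\bigvee\at(A)$, and since $\eta$ preserves $\vee$, $1=\eta(1)=\max\{\eta(a)\mid a\in\at(A)\}$, so some atom $a_\eta$ satisfies $\eta(a_\eta)=1$. If a second atom $b\ne a_\eta$ also satisfied $\eta(b)=1$, then (distinct atoms being incompatible, since $a_\eta\wedge b\leq a_\eta$ with $a_\eta$ minimal nonzero forces $a_\eta\wedge b\in\{0,a_\eta\}$, and $a_\eta\wedge b=a_\eta$ would give $a_\eta\leq b$, whence $a_\eta=b$) we would obtain $0=\eta(a_\eta\wedge b)=\min(1,1)=1$, absurd. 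Thus $a_\eta$ is unique. To identify $\eta$ with $\eta_{a_\eta}$, observe that if $a_\eta\leq x$ then $\eta(x)=\eta(a_\eta\vee x)=\max(1,\eta(x))=1$, while if $a_\eta\nleq x$ then $a_\eta\leq \neg x$ by Proposition \ref{proposition:atomic1}(ii), forcing $\eta(\neg x)=1$ and so $\eta(x)=0$.

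Part (ii) then follows immediately: by Proposition \ref{proposition:boolean-algebra} the atoms of $\yesno^n$ are precisely $e_1,\dots,e_n$, and the definition (\ref{equation:theta}) of $\theta_{e_i}$ matches the definition of $\eta_{e_i}$ given in (i) applied to $A=\yesno^n$. Part (iii) is an unwinding: by Definition \ref{definition:product-order}, $e_i\leq v$ holds precisely when the $i$th coordinate $v_i$ equals $1$, so $\theta_{e_i}(v)=v_i$; on the other hand $e_i^T v=v_i$ by the definition of the scalar product, giving the desired chain of equalities.

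There is no real obstacle here: the delicate step is the uniqueness of $a_\eta$ in the surjectivity argument of (i), which hinges on the incompatibility of distinct atoms together with the fact that $\eta$ preserves $\wedge$ and sends $0$ to $0$; everything else is a routine combination of the facts assembled in Propositions \ref{proposition:atomic1} and \ref{proposition:sum-of-atoms}.
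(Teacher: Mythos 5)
Your proposal is correct and follows essentially the same route as the paper: verify that each $\eta_a$ is a homomorphism, establish injectivity and surjectivity of $a\mapsto\eta_a$ via the nonemptiness and singleton nature of $\{a\in\at(A)\mid\eta(a)=1\}$, specialize to $A=\yesno^n$ for (ii), and unwind the product order for (iii). The only divergences are cosmetic (you derive uniqueness of $a_\eta$ from incompatibility of distinct atoms and preservation of $\wedge$, where the paper uses $b\leq\neg a$ and preservation of $\neg$, and you read off (iii) directly from the coordinates rather than via the decomposition $v=e_{n_1}+\dots+e_{n_k}$), and you usefully fill in the step the paper labels ``Evidently, $\eta=\eta_{a_\eta}$.''
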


\begin{proof}
(i) 
 For all $x,y\in A$,   $\eta_a(x\wedge y)=1$
iff $x\wedge y\geq a$\,\, iff \,\,$x\geq a$ and $y\geq a$\, iff
\, $\eta_a(x)=1$ and $\eta_a(y)=1.$ Thus $\eta_a(x\wedge y)=
\eta_a(x)\wedge \eta_a(y)$. Similarly, $\eta_a$ preserves the
$\vee $ operation.  Preservation of
the   $\neg$ operation follows from Proposition \ref{proposition:atomic1}(ii). 
Therefore,  $\eta_a\in \hom(A)$.
 If $a$ and $b$ are distinct atoms of $A$ then $\eta_a(a)=1$
and  $\eta_b(a)=0$,  because $a\ngeq b$. Thus the function
$a\mapsto \eta_a$ is one-one. 
For  any   $\eta \in \hom(A)$ \,  let 
$B_\eta \subseteq \at(A)$ be the set of atoms  $b$ 
such that  $\eta(b)=1$.
$B_\eta$ is nonempty, for otherwise (absurdum hypothesis),
$$
\mbox{
 $1=\eta(1)=\eta(\bigvee_{i=1}^n a_i)
 =\max \{ \eta(a) \mid a\in \at(A)\}=0$,
 }
 $$
  which is impossible.
We  have just proved that the  function $a\mapsto \eta_a$ 
 is onto $\hom(A)$.
Finally, 
$B_\eta$  cannot contain two distinct atoms  $a,b$.
For otherwise,  (absurdum hypothesis),
Proposition \ref{proposition:atomic1}(ii)
yields  $b\leq \neg a$,  whence
  $0= 1-\eta(a)= \eta(\neg a) \geq \eta(b)=1$, a contradiction.
So precisely one atom $a_\eta$ belongs to $B_\eta.$  
Evidently, 
$\eta=\eta_{a_\eta}.$

\medskip

(ii)  By  Theorem
 \ref{theorem:geometric-isomorphism},
 this  is the special case of (i)  for  $A=\yesno^n$.

\medskip
(iii) For  arbitrary  $v\in \yesno^n$, let
$e_{n_1},\dots,e_{n_k}\in \mathbb R^n$ 
be the basis vectors $\leq v$.
By Proposition \ref{proposition:sum-of-atoms}(i)
and (\ref{equation:secchieta})
we have $v=e_{n_1}+\dots+e_{n_k}$. 
From (\ref{equation:theta})  we obtain
\begin{eqnarray*}
\theta_{e_i}(v)  = 1
 &\mbox{ iff } & 
 e_i\leq v,\,\,\,(\ref{equation:theta}) \\
  {} &\mbox{ iff }&
  e_i\in \{e_{n_1},\dots,e_{n_k}\},\,\,\, \mbox{by Definition
  \ref{definition:product-order}} \\
 {} &\mbox{ iff }&
 e_i^T (e_{n_1}+\dots+e_{n_k}) =1\\
  {} &\mbox{ iff }& e_i^T v=1.\quad\quad \quad \quad\quad \quad  \quad\quad \quad \quad\quad \quad\quad\quad \quad \quad\quad \quad \quad\quad \quad  \Box 
\end{eqnarray*}
\end{proof}

 \subsection{Events, atomic events, and possible worlds}
\label{section:recap}

 \index{event}

Fix an integer $n=1,2,\dots$ and a set $G=\{X_1,\dots,X_n\}$.
Does there exist a largest boolean algebra 
containing $G$ as a generating set?

\index{miniterm}
\index{independent!events}
\index{sequence of independent events}

Take a coin,  toss  it $n$ times,  and record
  the result (head=1 or tail=0) of each toss. 
Suppose for each $i=1,\dots,n,$\,\,  $X_i$ stands for
the event ``the result of the $i$th toss of my coin is head''.
With $\neg, \wedge, \vee$ the usual
connectives of boolean logic, let us consider the
$2^n$ boolean formulas (called {\it miniterms})
\begin{equation}
\label{equation:miniterms}
X_{1}^{\beta_{1}}\wedge \dots, \wedge\,
X_{n}^{\beta_{n}} 
\mbox{ 
with  $X_{i}^{\beta_i}=
X_{i}$ if $\beta_i=1$, and
 $X_{i}^{\beta_i}=\neg X_{i}$ 
if  $\beta_i=0$.
}
\end{equation}

 \noindent
These miniterms record any possible outcome of your
$n$ tosses of a coin.  Each miniterm stands for a 
sequence of $n$ ``independent''  events, in the sense that 
  the occurrence or non-occurrence of $X_i$
does not interfere  with the occurrence or
non-occurrence of $X_j$,\,\,\,($i\not=j$).

Let   $\mathsf F_n$ be the set of all boolean {\it formulas}
in the {\it variables} $X_1,\dots,X_n$, where two
formulas are identified iff they are logically equivalent:
thus for instance, $\neg\neg X_1$ is identified with $X_1$, 
  \,\,\,$X_1\wedge X_2$ is identified with $X_2\wedge X_1,$
  and $X_1\vee(X_2\wedge X_3)$ is identified with
  $(X_1\vee X_2)\wedge (X_1\vee X_3).$\footnote{By assigning
  0 or 1 to the variables in all possible ways and working in
  the two-element boolean algebra $\yesno$ one has a 
  familiar   mechanical
procedure to check if two formulas are logically equivalent.}

The result is the {\it free boolean algebra on the free
generating set $G=\{X_1,\dots,X_n\}$.}

Readers who (like de Finetti) have little propensity for logic
may adopt any of the following two 
alternative definitions of $\mathsf F_n$:

\index{boolean!formulas}
\index{boolean!formulas up to logical equivalence}
\index{logical!equivalence}
\index{free!boolean algebra $\mathsf F_n$}
\index{free!generating set}
\index{$\mathsf F_n$ the free $n$-generator boolean algebra}

\begin{quote}
$G$ generates $\mathsf F_n$ and 
 $
X_{1}^{\beta_{1}}\wedge \dots, \wedge\,
X_{n}^{\beta_{n}}\not=0$ for all 
$(\beta_{1},\dots\beta_n) \in \{0,1\}^n$;
\end{quote}
$$\mbox{or, equivalently,}$$
\begin{quote}
$G$ generates $\mathsf F_n$,   \,\,\,and for every
boolean algebra \,\,$A$\,\,\, and  {\it function} 
$f\colon G\to A$,\,\,\,$f$ uniquely
extends to a {\it homomorphism } of $\mathsf F_n$
into $A$.
\end{quote}
 

\medskip
\noindent
As expected, $\mathsf F_n$  is  the largest
possible boolean algebra containing $G$
as a generating set. For, if $F$ is another boolean algebra
 generated by $G$, then by our last definition of $\mathsf F_n$,
the identity function ${\epsilon}\colon X_i\mapsto X_i$ extends to a
homomorphism  $\tilde \epsilon$ of $\mathsf F_n$ into $F$.
 Now,   $\tilde \epsilon$ is 
onto $F$,  because the $X_i$ generate $F$. 
We conclude that  $|F|\leq |\mathsf F_n|$.

The miniterms of   $\mathsf F_n$  in (\ref{equation:miniterms}) 
are the  $2^n$ atoms  of $\mathsf F_n$.  
Intuitively, they are the ``atomic events'' of $\mathsf F_n$.
As in  Corollary \ref{corollary:tutto},
 each miniterm $t \in \mathsf F_n$   
uniquely determines a
 ``possible world'' of $\mathsf F_n$, i.e., a homomorphism
$\mathsf F_n$ of into $\yesno$, assigning
  1 or 0 to any event   $e \in \mathsf F_n$, according as
$e$ dominates the atom $t$ or is disjoint from $t$.
In Proposition \ref{proposition:atomic1}(ii) it is shown that
this alternative always occurs.
Conversely, any  $\eta\in \hom(\mathsf F_n)$ 
uniquely determines the atom $a_\eta$ given by
the smallest element $a\in  \mathsf F_n$
such that $\eta(a)=1.$ The independence of the events
$X_i$ results in the largest set of possible worlds. 
  \footnote{Kolmogorov calls each atom
  of $\mathsf F_n$  an ``elementary 
  event''. For Boole, the atoms of  $\mathsf F_n$ are
its ``constituents''.
 De Finetti says that each atomic event is a ``case''.
  of the dual space of $\mathsf F_n$.
  An infinite boolean algebra $A$ need not have
atoms.  The set $\hom(A)$  conveniently replaces
the set of atoms of $A$ in any case.} 

\index{Kolmogorov!elementary event}
\index{Boole!constituent}
\index{atom}
\index{atomic event}
\index{elementary event}
\index{constituent}
\index{sample point}
\index{possible world}
\index{homomorphism into $\yesno$}


\bigskip

Now suppose  $n=3$ and  $X_1,X_2,X_3 $ stand
for the following events, where
``wins'' is shorthand for
``wins the next 
FIFA Club World Cup'':
$$\mbox{
$X_1= \mbox{Brazil wins}$,\,\,\, 
$X_2=\mbox{ Spain wins}$,\,\,\, $X_3= \mbox{France wins}$.
}
$$
From the  rules of the FIFA World Cup
it follows that  not all $2^3$ atomic events
of $\mathsf F_3$,  coded by the miniterms  
(\ref{equation:miniterms}),   
can  occur. For instance, the atomic event
$X_1\wedge X_2\wedge \neg X_3$ is impossible,
and so is, a fortiori,  the atomic
event  $X_1\wedge X_2\wedge X_3$.
The impossible atomic events are precisely those
stating that the number of winners is $\geq 2.$  
On the other hand,
$\neg X_1\wedge \neg X_2\wedge \neg X_3$ is possible.
It follows that the boolean algebra $A$ generated by 
 the three events $X_1,X_2, X_3$   is 
 strictly smaller than  the free three-generator algebra 
 $\mathsf F_3$,  
  the largest possible boolean three-generator
  algebra.
\,\, We construct  $A$ by 
deleting  from the set of  atoms of
$\mathsf F_3$ those which  code 
atomic events  forbidden  by the rules of the FIFA cup.
A moment's reflection  shows that the surviving four 
atomic events  in  $A$ are as follows:
$$
\mbox{
$X_1\wedge \neg X_2\wedge \neg X_3$,
$\neg X_1\wedge  X_2\wedge \neg X_3$,
$\neg X_1\wedge \neg X_2\wedge X_3$,
$\neg X_1\wedge \neg X_2\wedge \neg X_3$.
}
$$
As expected,  the boolean algebra
 $A$ has  $2^4=16$ elements/events,  fewer than
the $2^8=256$ elements of $\mathsf F_3$.
Beyond  $X_1,X_2,X_3$ themselves,
 and the four atoms of $A$, 
examples of events of $A$  include    
 $\neg X_1\wedge   X_2$  (which in $A$ is the same  as
   $X_2$),  the impossible event
 $X_1\wedge X_2$  (i.e., the zero element of $A$), the sure event $X_1\vee \neg X_1$ (i.e., the top element 1 of $A$), and a few others. 
 
  As in Proposition 
 \ref{proposition:sum-of-atoms}(i), each event of $A$
 is the disjunction  (algebraically speaking, the supremum)
  of the atomic events it dominates.
 Thus, e.g., in $A$ we have
 $$\neg X_1  =
 (\neg X_1\wedge \neg X_2\wedge X_3)\vee
 (\neg X_1\wedge  X_2\wedge \neg X_3)
 \vee(\neg X_1\wedge \neg X_2\wedge \neg X_3),$$
 while an easy exercise shows that
  in  $\mathsf F_3$\,\,\,the element $\neg X_1$ is the
 supremum  of four atoms.
 
By  Proposition 
 \ref{proposition:sum-of-atoms}(ii), 
 up to isomorphism, every $n$-generator
  boolean
algebra  arises from a similar reduction procedure
of the atoms of a 
 free boolean algebra $\mathsf F_n$.
 
 \index{the most general finite boolean algebra}
 \index{finite!boolean algebra}

\subsection{States}
\label{section:states}
 
  \index{state}
  \index{state!additivity property}
   
\begin{definition}
\label{definition:state}
 {\rm  A {\em state}
 of a   boolean algebra 
 $A$ is a function $\sigma\colon A\to [0,1]$ with $\sigma(1)=1$,
  having the {\em additivity property}:
For all $x,y\in A$\,\,\,  if  $x\wedge y=0$  then 
$ \sigma(x\vee y)=\sigma(x)+\sigma(y).$
}
\end{definition}

\smallskip
\noindent
An easy verification shows that
{\it every homomorphism of $A$ into $\yesno$   is a state}.

\index{state!alias ``finitely additive probability measure''}
\index{probability!finitely additive measure}
\index{probability!Borel measure}
\index{state!of an ordered group with unit}
\index{state!of a C*-algebra}
\index{state!of a quantum system}
 
 \index{regular Borel probability measure}
 \index{Carath\'eodory extension  theorem}
 \index{theorem!Carath\'eodory extension}
 \index{finitely additive probability measure} 

 \medskip
 \begin{center}
  \fbox
{
\parbox{0.96\linewidth}
{
{\bf Historical/Terminological remark
 (for a second reading).}\\[0.1cm]
States of boolean algebras
 are also known as ``finitely additive probability
measures''. 
   Carath\'eodory
 extension  theorem  
yields an affine homeomorphism
of the space  of  states of every finite or infinite
boolean algebra $A$ onto the space of
 regular Borel  probability measures
on the Stone space of $A$.  
Hence  our terminology is preferable,
not only for its conciseness, but also to avoid confusion
between the finite additivity of states at the
 algebraic level of $A$ and the countable additivity of
Borel probability measures at  the topological
  level of the dual Stone space of $A$. The
  specific   choice of the
    term ``state'' rests on the
categorical equivalence $\Gamma$
  between MV-algebras 
    and unital $\ell$-groups,
  whose ``states'',  i.e.,
   unit-preserving monotone homomorphisms, 
   are deeply related to the
  ``states'' of  C*-algebraic quantum systems.
  For every unital $\ell$-group  $(G,u)$,
  setting $A=\Gamma(G,u)=[0,u]$,
  the  restriction function
$\sigma\mapsto \sigma\restrict [0,u]$
 is an affine homeomorphism of the state space of $(G,u)$
onto the state space of $A$.  Since boolean algebras
are precisely  idempotent MV-algebras, every state of
a boolean algebra $B$ uniquely corresponds to the only
  state of the unital $\ell$-group $(H,v)$
associated to $B$ by $\Gamma$,
and also determines  a  state of the
C*-algebra associated to $(H,v)$.
}
}
\end{center} 

\index{affine homeomorphism}
\index{unital $\ell$-group}
\index{state!of a unital $\ell$-group}
\index{unit-preserving monotone homomorphism}
\index{$\Gamma$, the equivalence of MV-algebras
and unital $\ell$-groups}
\index{boolean algebras = idempotent MV-algebras}

\medskip
 The  geometric representation
of every state $\sigma$ of a boolean
algebra $A$  is the object of  the
 following corollary, where $\sigma$ is
 identified 
 with a suitable vector in euclidean space, 
 and the value assigned by $\sigma$
 to an event is the scalar product of  the vector
 representing the state and  the vector
 representing the event.
 
  \index{event}
 
 \medskip
 A (possibly infinite) set $X$ in euclidean space
$\mathbb R^n$ is {\it convex}
if for any two 
points $x,y\in X$ the segment joining $x$ and $y$
is contained  in $X$.
 
 \index{convex!hull}
 \index{convex!combination}
 \index{convex!set}
\index{$\conv(e_1,\dots,e_n)$, the convex hull of 
$\{e_1,\dots,e_n\}$} 
 
 \medskip
 For any set $\{x_1,\dots,x_l\}\subseteq
 \mathbb R^n$, the {\it convex hull}  $\conv(x_1,\dots,x_l)$
 is the set of all {\it convex combinations} of 
 $x_1,\dots,x_l$, i.e., all points $x$ of $\mathbb R^n$
 of the form
 $$
 x=\lambda_1x_1+\dots+\lambda_nx_n,\,\,\,
\,\,\,0\leq \lambda_i\in \mathbb R,
 \,\,\,(i=1,\dots,n),\,\,\,
  \,\,\,\sum_{i=1}^n \lambda_i=1.
 $$
 An element $z\in X$ is {\it extremal} (in $X$)  if whenever
 $z\in \conv(a,b)$ for some $a,b\in X$ then
 $z=a$ or $z=b.$
  
 \index{extremal} 
\index{$\mathsf S$, the   convex hull of $e_1,\dots,e_n$}

\medskip 
\begin{corollary}
\label{corollary:vi-sigma}
{\rm (Geometric representation of the states of $A$)}
Let $\sigma $ be a state of the boolean
algebra $\yesno^n$. Let 
the closed convex set 
$\mathsf S\subseteq \mathbb R^n$
and the vector 
$v_\sigma \in \mathbb R^n$ be respectively defined by
\begin{equation}
\label{equation:vi-sigma}
\mathsf S=\conv(e_1,\dots,e_n)
\,\,\,\,\mbox{ and }\,\,\,\,
v_\sigma=(\sigma(e_1), \dots, \sigma(e_n)).
\end{equation}
Then
\begin{itemize}
\item[(i)]
$\,\,(v_\sigma)_1+\dots+(v_\sigma)_n=1$.
Thus  \,\,\,
$v_\sigma\in \mathsf S$.

	\medskip
\item[(ii)]$\,\,$ 
For all $ v=(v_1,\dots,v_n)\in \yesno^n,\,\,\,
\sigma(v) = v_\sigma^T v.
$
 
\medskip
 \item[(iii)]  $\,\,\,\,\,v_\sigma$ is the only vector in $\mathbb R^n$
 such that for all $v\in \yesno^n$,  $\sigma(v)=v_\sigma^T v.$
 Therefore, 
  any state is uniquely determined by the values it 
  assigns  to the atoms $e_i.$
\end{itemize}
\end{corollary}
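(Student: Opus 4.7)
The plan is to exploit the fact that by Proposition \ref{proposition:boolean-algebra} the atoms of $\{0,1\}^n$ are exactly the standard basis vectors $e_1,\dots,e_n$, so everything reduces to additivity of $\sigma$ on joins of atoms, combined with the representation of an element as the join of the atoms it dominates.

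For (i), I would observe that the $e_i$ are pairwise incompatible (since $e_i \wedge e_j = 0$ for $i\neq j$ in the product order) and that $e_1 \vee \dots \vee e_n = 1$. An easy induction on additivity (extending Definition \ref{definition:state} from two to $n$ pairwise incompatible summands) then gives
\[
1 = \sigma(1) = \sigma(e_1) + \dots + \sigma(e_n) = (v_\sigma)_1 + \dots + (v_\sigma)_n.
\]
Combined with $\sigma(e_i)\in[0,1]$, this exhibits $v_\sigma$ as a convex combination of $e_1,\dots,e_n$, so $v_\sigma\in \mathsf S$.

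For (ii), given $v\in\{0,1\}^n$, I would write $v=e_{n_1}+\dots+e_{n_k}$ as in \eqref{equation:secchieta}, where the $e_{n_j}$ are precisely the basis vectors below $v$. By Proposition \ref{proposition:sum-of-atoms}(i) this same sum, read in the boolean algebra, is the join $v=e_{n_1}\vee\dots\vee e_{n_k}$ of pairwise incompatible atoms. Applying additivity once more,
\[
\sigma(v)=\sigma(e_{n_1})+\dots+\sigma(e_{n_k})=(v_\sigma)_{n_1}+\dots+(v_\sigma)_{n_k}=v_\sigma^T v,
\]
where the last equality just unfolds the scalar product, since the coordinates of $v$ are $1$ exactly at the positions $n_1,\dots,n_k$.

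For (iii), I would specialize (ii) to $v=e_i$: any $w\in\mathbb R^n$ satisfying $\sigma(v)=w^T v$ for every $v\in\{0,1\}^n$ must in particular satisfy $w_i=w^T e_i=\sigma(e_i)=(v_\sigma)_i$ for each $i$, forcing $w=v_\sigma$. The same argument shows that the state $\sigma$ is pinned down by its values on the atoms, since (ii) determines $\sigma$ on all of $\{0,1\}^n$ from $v_\sigma$. No step looks like a serious obstacle; the only thing to be careful about is the routine induction that lifts binary additivity to the $n$-ary join $e_1\vee\dots\vee e_n=1$, which I would either state explicitly or relegate to a one-line remark.
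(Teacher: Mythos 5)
Your proposal is correct and follows essentially the same route as the paper: additivity over the pairwise incompatible atoms whose join is $1$ for part (i), the decomposition of $v$ as the join of the atoms it dominates for part (ii), and specialization to $v=e_i$ for part (iii). The only cosmetic difference is that you flag the routine induction extending binary additivity to $n$-ary joins, which the paper applies tacitly.
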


\begin{proof}
 (i) The top element 1 in the boolean algebra $\yesno^n$ is the
 sum $e_1+\dots+e_n=e_1\vee\dots\vee e_n$. Since
 $e_i\wedge e_j=0$ for $i\not=j$ then $1$ is the supremum
 of incompatible elements of $\yesno^n.$ By additivity,
 $1=\sigma(\bigvee_{i=1}^n e_i)=\sum_{i=1}^n\sigma(e_i),$ 
 whence  
 $v_\sigma\in \mathsf S$. 
 
\medskip
 (ii) For each $i=1,\dots,n$ let us agree to write
 $0e_i=$ the origin in $\mathbb R^n$  and
 $1e_i=e_i.$ The vector  
  $v$ is a linear
 combination  of  the 
 pairwise  orthogonal vectors  $e_1,\dots,e_n$
  with uniquely determined coefficients 1 or 0
  according as $e_i\leq v$ or $e_i\nleq v$.
Further,
$v$ is  the supremum of the $e_i$ 
  with the same coefficients.
By 
Corollary \ref{corollary:tutto}(iii), 
 $
 v_i=\theta_{e_i}(v)\in \yesno,
 $
whence
 $$
 v=\sum_{i=1}^n\theta_{e_i}(v)e_i=
 \bigvee_{i=1}^n
\theta_{e_i}(v)e_i.
$$
By  (\ref{equation:vi-sigma})
and the  additivity property of $\sigma$ we can write
\begin{eqnarray*}
\sigma(v)
&=&
\sigma\left( \bigvee_{i=1}^n \theta_{e_i}(v)e_i\right)
=
\sum_{i=1}^n \sigma(\theta_{e_i}(v)e_i) =
\sum_{i=1}^n \theta_{e_i}(v)\sigma(e_i)
\\
{}
&=&
 \sum_{i=1}^n v_i(v_\sigma)_i =
v^Tv_\sigma
=
v_\sigma^Tv.
\end{eqnarray*}

\smallskip
(iii) Suppose $w\in \mathbb R^n$ satisfies
$w^T v=\sigma(v)$ for all $v\in \yesno^n.$ Then by
 (i)-(ii),  for all $i=1,\dots n,$ we have
$(w^T-v_\sigma^T)e_i=0$ whence  $w_i=(v_\sigma)_i$
and $w=v_\sigma.$
\hfill{$\Box$}
\end{proof}

\subsection{De Finetti's  ``Fundamental Theorem of Probability''}
\label{section:fundamental}

In  his  ``Investigation of the laws of thought", 
  \cite[Chapter XVI, 4, p. 246]{Boole1854},
 \,\,\, Boole writes:
  
  \index{Boole}
  \index{Boole!the object of probability}
  \index{Boole!probabilistic inference problem}
  \index{the object of probability theory}
  \index{probability!its object}

\smallskip
\begin{quote}
{\small ``the object of the theory of 
probabilities might be thus defined.   
Given the probabilities of any events,  of whatever kind, 
to find the probability of some other event connected with them."
}
\end{quote}

\smallskip
\noindent
In his paper \cite{def-fundmath}
de Finetti  gave a  criterion for
the  probability of the new  event to exist.
This is his consistency theorem \ref{theorem:dutch}, 
based on Definition \ref{definition:consistency}.
Furthermore, in 
  \cite[p. 13]{def-poincare}  and on page 112 of  his book
\cite[3.10.1]{def-wiley1},  
de Finetti showed that 
the set of  possible probabilities of the new event
(if nonempty) is a closed interval contained in $\interval.$

 \smallskip
He named his result
``the Fundamental Theorem of Pro\-b\-a\-bi\-l\-ity''.

 \medskip
 \noindent
For our  self-contained proof of this theorem
we prepare the following notation 
and terminology, which will also be used  in a later chapter: 
 A (closed) {\it hyperplane}
 \index{hyperplane}
 $H$ in euclidean space $\mathbb R^n$ is a
 subset of $\mathbb R^n$ of the form
 $\{x\in \mathbb R^n\mid l^T x = w\}$ for some
 nonzero vector $l\in \mathbb R^n$ and
 real number $w$. The sets
  $\{x\in \mathbb R^n\mid l^T x \geq  w\}$ 
   and 
    $\{x\in \mathbb R^n\mid l^T x \leq w\}$  
    are the two {\it (closed) half-spaces}
    \index{half-space}
        \index{closed half-space}
with boundary  $H$.

\medskip 
 
\index{de Finetti!``Fundamental Theorem of Probability''} 
 \index{theorem!de Finetti ``Fundamental Theorem of Probability''}
 \index{$S(A)$, the state space of $A$}

\begin{theorem}
{\rm (De Finetti's ``Fundamental Theorem of Probability'')}
\label{theorem:fundamental}
Let $A$ be a boolean algebra with $n$ atoms 
and state space $S(A)$.
Fix a finite subset   $E=\{h_1,\dots,h_m\}$  
of $A$ along with  a function 
 $\beta\colon E\to \interval$, and let 
$$
S_\beta=\{\sigma\in S(A)\mid \sigma\supseteq \beta\}
=\mbox{be the set of states of $A$  extending $\beta$.}
$$
Next,  for  any $h\in A$ let 
\begin{equation}
\label{equation:sbh}
S_\beta(h)=\{\sigma(h)\mid \sigma\in S_\beta\}
\end{equation}
be the set of possible values assigned
 to  $h$ by all states extending
$\beta.$
Then  $S_\beta(h)$ is either empty, or is a  
closed interval  contained in $\interval$
possibly consisting of a single point.
\end{theorem}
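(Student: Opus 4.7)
The plan is to transfer the problem into euclidean space via the geometric representations already established, where $S_\beta$ becomes a compact convex set and $\sigma\mapsto\sigma(h)$ becomes a linear functional; the result is then a routine consequence of the intermediate value theorem.

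First, by Theorem \ref{theorem:geometric-isomorphism} we identify $A$ with $\yesno^n$, and by Corollary \ref{corollary:vi-sigma} we identify each state $\sigma$ of $A$ with the vector $v_\sigma\in\mathsf S=\conv(e_1,\dots,e_n)$ satisfying $\sigma(v)=v_\sigma^T v$ for all $v\in \yesno^n$. Under this identification, $S(A)$ becomes (is bijective with) the standard simplex $\mathsf S\subseteq \mathbb R^n$. I should note that the correspondence $\sigma\mapsto v_\sigma$ is in fact an affine bijection: any $w\in \mathsf S$ defines, by $v\mapsto w^T v$, a function on $\yesno^n$ which is easily checked to be a state, and by Corollary \ref{corollary:vi-sigma}(iii) this is the inverse correspondence.

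Next I would translate the condition $\sigma\supseteq\beta$ into linear constraints. Viewing each $h_i\in A$ as a vector $h_i\in\yesno^n\subseteq\mathbb R^n$ and writing $w_i=\beta(h_i)$, the condition $\sigma(h_i)=\beta(h_i)$ becomes $v_\sigma^T h_i=w_i$, i.e.\ $v_\sigma$ lies on the hyperplane $H_i=\{x\in\mathbb R^n\mid h_i^T x=w_i\}$. Therefore $S_\beta$ corresponds to the set
\[
K_\beta=\mathsf S\cap H_1\cap\dots\cap H_m,
\]
which, being the intersection of a compact convex set with finitely many closed convex sets, is itself compact and convex (possibly empty).

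Finally, the map $\Phi_h\colon \mathbb R^n\to\mathbb R$, $\Phi_h(x)=h^T x$, is continuous and affine, and by Corollary \ref{corollary:vi-sigma}(ii) we have $\Phi_h(v_\sigma)=\sigma(h)$. Hence
\[
S_\beta(h)=\Phi_h(K_\beta).
\]
If $K_\beta=\emptyset$ then $S_\beta(h)=\emptyset$. Otherwise $K_\beta$ is a nonempty compact convex (hence connected) subset of $\mathbb R^n$, so its image under the continuous map $\Phi_h$ is a nonempty compact connected subset of $\mathbb R$; the only such subsets are closed bounded intervals (including singletons). Since every $\sigma\in S(A)$ takes values in $\interval$, this interval is contained in $\interval$, as required.

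The argument is essentially routine once the geometric dictionary is in place; the only point requiring a bit of care is the affineness of $\sigma\mapsto v_\sigma$ (so that $K_\beta$ really is convex), which falls out cleanly from Corollary \ref{corollary:vi-sigma}.
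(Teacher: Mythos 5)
Your proposal is correct and follows essentially the same route as the paper's own proof: identify $A$ with $\yesno^n$, represent states as vectors in $\mathsf S=\conv(e_1,\dots,e_n)$, realize $S_\beta$ as $\mathsf S\cap H_1\cap\dots\cap H_m$, and take the image of this closed convex set under the continuous map $x\mapsto h^Tx$. Your added remarks (that $\sigma\mapsto v_\sigma$ is an affine bijection onto $\mathsf S$, and that compactness gives boundedness of the image) only make explicit points the paper leaves implicit.
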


\begin{proof}  In view of the representation Theorem
\ref{theorem:geometric-isomorphism},
we can identify $A$ with the boolean algebra
$\yesno^n.$ Then $E$ is  
  a set of  vertices  $h_j\in \yesno^n$ of the
$n$-cube $\interval^n$. For all
$\sigma\in S(A)$ and $w\in \yesno^n$,\,\,\,
the value $\sigma(w)$ is given by the
scalar product $v_\sigma^T w,$ with 
$v_\sigma=(\sigma(e_1), \dots, \sigma(e_n))$
the  vector associated to $\sigma$ by the
geometric representation in
Corollary \ref{corollary:vi-sigma}. 
In particular, 
$v_\sigma$  lies  in the closed convex  set  
$$\mathsf S=\conv(e_1,\dots,e_n),
\mbox{
with $e_1,\dots,e_n$ the standard basis 
vectors of $\mathbb R^n$.}
$$
 For every  $j=1,\dots,m$,  the set    
$\{\sigma\in S(A)\mid
\sigma(h_j)=\beta(h_j)\}$ of states of $A$ extending
the singleton function $h_j\mapsto \beta(h_j)$ 
is  the set of vectors 
$$\{v_\sigma \in \mathsf S\mid \sigma(h_j) =\beta(h_j)\}
=\{v_\sigma\in \mathsf S\mid
v_\sigma^T h_j=\beta(h_j)\}.$$
 This is  the intersection  of $\mathsf S$
 with the hyperplane
   $H_j\subseteq \mathbb R^n$
of all vectors $w\in \mathbb R^n$ such that
$w^Th_j=\beta(h_j).$ 
As a consequence,  the set
$S_\beta\subseteq \mathsf S$
  of states of $A$ extending $\beta$ coincides with
$
 \mathsf S\cap H_1\cap\dots\cap H_m.
$
It follows  that $S_\beta$ 
 is a closed   convex (a fortiori connected) subset of 
$\mathsf S$. 
The set $S_\beta(h)$  in \eqref{equation:sbh}
has the form
$ \{v_\sigma^T  h\in \interval \mid    \beta
\subseteq \sigma \in \mathsf S\}.$ 
Thus $S_\beta(h)$   is the range of the
  continuous  $\interval$-valued function
$v_\sigma\mapsto v_\sigma^T  h$,
where the vector
$v_\sigma$ ranges over the
  closed connected space  $S_\beta$. 
Elementary topology
shows  that $S_{\beta}(h) $  is closed and connected.
In conclusion,   if nonempty, $S_{\beta}(h)$
 is a closed  interval in $\interval$.
\hfill{$\Box$}
 \end{proof}

 \index{PSAT!probabilistic satisfiability}
 \index{PSAT!and Boole's probabilistic entailment problem}
 \index{PSAT!optimization version}
 \index{probabilistic satisfiability}
 \index{optimization version of PSAT}
 \index{boolean!formulas}
  \index{boolean!formulas as codes}
 
  \begin{remarks}
  \label{remark:PSAT}
  {\rm
  Boole's remarks on the object of the theory of
  probabilities are taken up  today in  the
  PSAT  (probabilistic satisfiability)  problem and its optimization version. Here events $h_1,\dots,h_m$ are assigned
 probabilities $p_1,\dots, p_m,$
  and one has to specify the possible values of
  the probability
 $p_{m+1}$  of a new event $h_{m+1}$. 
 For computational purposes, all  events $p_i$ are coded by {\it boolean formulas}, and all $p_1,\dots, p_m,$
are  {\it  rational} numbers.

 Theorem \ref{theorem:fundamental}
  shows that the set 
  of possible values of   $p_{m+1}$, 
  if nonempty, is  a closed interval contained in  $\interval.$
To  provide a fundamental necessary and sufficient condition 
for  this set  to be nonempty, it will take de  Finetti's consistency 
($\sim$ Dutch book)  theorem \ref{theorem:dutch}.
 PSAT is the problem of checking if  this condition
 is valid. PSAT  is
 a generalization of the 
  satisfiability problem SAT for boolean formulas, and has 
  the same computational complexity as SAT. 

PSAT and  its optimization  version
  pertain to a vibrant research area 
in various domains, including 
defeasible reasoning, 
automated deduction,  
formal epistemology, 
 and  uncertainty management.
For more information, also including de Finetti's contributions to
Boole's problem and  its modern reformulation, see  
   {P. Hansen},   {B.  Jaumard},
``Probabilistic Satisfiability'',
in: Handbook of Defeasible
Reasoning and Uncertainty Management
Systems, Vol. 5.
(J. Kohlas, S. Moral, Editors).
Springer, 2000, pp. 321-367.
}
  \end{remarks}

\section{De Finetti's Consistency   Theorem}
\label{chapter:consistency}
 
  \index{lottery}
  \index{lottery!one-event ticket}
 
Lot T. Tery is an honest and experienced manager of a
worldwide  lottery.  Each ticket reads:

 \medskip
\begin{center}
\fbox
{
\parbox{0.52\linewidth}
{
\footnotesize
I, the undersigned seller of this ticket, will buy it back paying the bearer one euro,  if \,Spain wins the next FIFA world cup.

\vspace{0.1cm}

\hfill{(Signed: the seller)}
}
}
\end{center}

\noindent
The price of each ticket is $p$ (euro, to fix ideas). 
 
  If   $p$ is a fair price for you and you decide to buy $N$ tickets, you now pay $Np$    to Lot T. Tery.
  He, the signatory ticket seller, will   pay you 
   $N$  if Spain wins.
 
On the other hand,  if  the ticket price seems too high to you, why not ask  Lot T. Tery   to {\it buy}  $M$  tickets from you?
  What better proof that the  price $p$ is right?
His acclaimed honesty  makes him willing to
exchange  his managerial role with you: 
  he  pays you   $pM$ for  $M$   tickets.
You, the signatory seller of the ticket, 
 will pay him  $M$  if Spain wins.

 As de Finetti  notes on   page 309  of
 his paper  \cite{def-fundmath}, 
   the price $p$ set by any experienced manager  like 
   Lot T. Tery must satisfy the 
trivial  inequalities   $0 \leq p\leq 1$.  Otherwise,  you could 
 bankrupt him, whatever the outcome of  the next FIFA world cup.  As a matter of fact, if $p>1$,  
  selling  him  $Z$ tickets,
 with $Z=$ one zillion,  you will get  $(p-1)Z$  
  if Spain wins  and $pZ$  otherwise. 
Lot T. Tery will get even worse   if he sells 
$Z$ tickets at a   price  $p<0$.  

Therefore, first of all, consistency requires that 
the ticket price be in the range   $\interval.$
Any inconsistency on  Lot T. Tery's part
can be punished by you,  sending  him to ruin, whatever
the outcome of the FIFA cup.

  \index{lottery!consistent ticket price} 
 
What other conditions must  the ticket price $p$  
meet, to keep you from bankrupting Lot T. Tery? 
Definition \ref{definition:consistency} provides  the answer,
even  for the general case where  tickets  for multiple events
and sold/bought.
De Finetti's consistency theorem \ref{theorem:dutch}
gives a characterization of consistency.

\subsection{Consistency}
\label{section:consistency}

\index{consistency}
\index{consistent!book}
Following de Finetti,   we
   define the fundamental notion of a
 consistent   
 assignment of numbers in $\interval$  to 
 ``events''   $h_1,\dots h_m$ understood as elements of  a
finite  boolean algebra $A$.  The finiteness hypothesis
will be dropped in Section 	\ref{section:infinite}.
 
 \smallskip
With  $\{a_1,\dots,a_n\}$ the set of
atoms of  $A$,  let
$\{\eta_{a_1},\dots,\eta_{a_n}\} =
\{\eta_{1},\dots,\eta_{n}\}=\hom(A)$ be the set of  
 homomorphisms of $A$
into the two-element boolean algebra $\yesno$, as in 
 Corollary \ref{corollary:tutto}(i).  

\medskip
\begin{definition}
{\rm (De Finetti,
\cite[pp. 304-305]{def-fundmath})}
\label{definition:consistency}
{\rm 
\,\,\,Let $E =\{h_1,\dots h_m\}$  be a subset of  $A$ and
  $\beta \colon E \to   \interval$  a function.
  Then $\beta$ is said to be {\em inconsistent in $A$}
  \index{inconsistent book}  if
 there exists a function   $ s\colon  E \to \mathbb R$ such that 
\begin{equation}
\label{equation:consistency-finite}
\sum_{j=1}^m s(h_j)(\beta(h_j)-\eta_i(h_j)) < 0
\,\,\mbox{ for every }\,\,  i=1,\dots,n.
\end{equation}
If $\beta$ is not inconsistent in $A$  we say it is {\em consistent
in $A$}.
}
\end{definition}

\index{consistent!book}


\index{betting odd}
\index{betting rate}
\index{probability!as a price}
\index{probability!as a prevision}
\index{stake}

To better understand  Definition \ref{definition:consistency}
and its characterization in  theorem \ref{theorem:dutch}, 
recalling
de Finetti's second citation at the outset of Chapter
\ref{chapter:fundamental},
 let us replace  Lot T. Tery 
  by a bookmaker  named Bookie 
grappling with a clever
bettor  named Betty. 
Bookie posts  ``betting odds'', or ``betting rates''
$$p_1=\beta(h_1),\dots,p_n=\beta(h_m)
\in \interval$$
 on future  events
\,\,$h_1,\dots,h_m\in A$.
As in the example above,   
 $p_j=\beta(h_j)$   is
  the price of each ticket  for a
payoff  of  1 euro   in case $h_i$ 
  occurs, and 0 otherwise.\footnote{Real-life bookmakers
  prefer to post their odds as  $1/p_j\geq 1$
   instead of $p_j$  (always $>0$), guess why.} 
 
 \smallskip
If Bookie's price  $\beta(h_j)$ is deemed
reasonable by Betty,
she   now places a stake $s(h_j)\geq 0$,
 pays  $\beta(h_j)s(h_j)$, 
hoping to win   
$s(h_j)$\,  if  $h_j$ occurs.
It goes without saying that if $h_j$ does not occur
Betty's win will be zero.

\index{swapping the bookie/bettor roles}
 \index{bettor-to-bookie orientation}
\index{balance}
\index{real-life bookmakers}

On the other hand, if  Betty finds  
$\beta(h_j)$  excessive\,\,---\,\,as is almost always the case 
 with bookmakers' odds in real life\,\,---\,
 {\it she can place a stake $s(h_j)<0$.}  So  Bookie 
pays   Betty  $\beta(h_j)\,|s(h_j)|$,  and wins 
 $|s(h_j)|$    if  $h_j$ occurs.  
In this case,  Bookie/Betty's  roles swap.
This never 
happens in real life.

For definiteness, let us make the following stipulation:
\begin{equation}
\label{equation:orientation}
\mbox{\small  All financial  transactions are given the 
 Betty-to-Bookie orientation.}
 \end{equation}
Then for  any positive or negative  stake $s(h_j)$,
the balance of this single  bet  on event $h_j$
is given by 
$s(h_j)(\beta(h_j)- \eta_i(h_j))
\,\,\,\mbox{ in the ``possible world'' }\,\,\eta_i\in \hom(A).$
\footnote{It is understood that
  $\eta_i(h_j)$ equals 1 or 0 according
as $h_j$ occurs or does not occur in  $\eta_i$.}
\,\,\,Since zero bets are possible,
we may assume Betty is betting on all events $h_1,\dots,h_m$.
Bookie's  balance is then  
\begin{equation}
\label{equation:balance of gambler's transaction}
\sum_{j=1}^m s(h_j)(\beta(h_j)- 
\eta_i(h_j)). 
\end{equation}
Bookie's book $\beta$ is   inconsistent
 in the sense of
Definition \ref{definition:consistency}  iff  Betty  
can devise  positive or negative 
stakes  $s(h_1),\dots,s(h_m)\in \mathbb R$ 
that guarantee  her  a minimum profit of  1
\,\,---\,equivalently, one zillion\,---\,  
``re\-gard\-less of the outcome of the events  $h_j$'',
i.e.,  in any ``possible world''     $\eta_i\in \hom(A)$.
Correspondingly, 
 Bookie has a sure loss of at least one(zillion).
 
 \index{event}
 \index{de Finetti!event} 
 \index{inconsistent book} 
 \index{future outcome}
\index{possible world}

The totality  of
``possible outcomes", or   ``possible worlds'',
is made precise by the homomorphisms
$\eta_1,\dots,\eta_n \in \hom(A)$.  
Definition \ref{definition:homomorphism}
ensures  that
the laws of logic hold in each  $\eta_i$.

To view  panoramically   all events
$h_j,\,\,\,(j =1,\dots,m)$ and all possible worlds  $\eta_i,
\,\,(i=1,\dots,n)$,
we prepare  the following
$n\times m$ matrix $M$:

\medskip
\begin{equation}
\label{equation:emme}
M= 
\left(
\begin{array}{cccc}
 \vspace{0.3cm}
\beta(h_1)-\eta_1(h_1)&\quad \beta(h_2)-\eta_1(h_2)&
\,\,\dots\,\,&\beta(h_m)-\eta_1(h_m)
\cr \vspace{0.3cm}
\beta(h_1)-\eta_2(h_1)&\quad\beta(h_2)-\eta_2(h_2)&
\,\,\dots\,\,&\beta(h_m)-\eta_2(h_m)
\cr \vspace{0.3cm}
\dots& \,\dots\,\, & \,\,\dots\,\,&\dots\cr
\beta(h_1)-\eta_n(h_1)&\quad\beta(h_2)-\eta_n(h_2)&
\,\,\dots\,\,&\beta(h_m)-\eta_n(h_m)
\end{array}
\right)
\end{equation}

%
%

 \medskip
\noindent
Let  $s(h_1),\dots,s(h_m)\in \mathbb R$ be
Betty's  stakes. Fix a row   $i = 1,\dots,n$ and a column 
$j  =1,\dots,m$.\,\,\,
 
If $\eta_i(h_j)=0$, i.e., if  $h_j$ does not occur in the
possible world  $\eta_i$,  then the term  \,\,$M_{i,j}$ \,\,in $M$
coincides with    $\beta(h_j)$.  
Recalling \eqref{equation:orientation},
Bookie's  profit/loss  is  $\beta(h_j)s(h_j)\in \mathbb R$.
On the other hand,  if  $h_j$ occurs in the
possible world  $\eta_i$ 
then 
$M_{i,j}=\beta(h_j)-1$  and
Bookie's   profit/loss  is 
$(\beta(h_j)-1)s(h_j)$.    
In either case,  
Bookie's profit/loss for Betty's  stake  $s(h_j)$ on
 event $h_j$  is  
$(\beta(h_j)-\eta_i(h_j))s(h_j)$
 in  the possible world
  $\eta_i.$

Let  now
$s\in \mathbb R^m$ 
be the (always column) vector whose  coordinates
are $s_1=s(h_1),\dots,s_m=s(h_m)$.
Then the  vector $Ms\in \mathbb R^n$
gives the balance 
(\ref{equation:balance of gambler's transaction})
in all possible worlds $\eta_1,\dots,\eta_n$.
Bookie's $\beta$ is inconsistent iff Betty can devise
a vector  $s\in \mathbb R^n$  (equivalently,  $s\in \mathbb Q^n$)  such that 
all coordinates of  $Ms$ are $<0.$

 \subsection{The Consistency  (alias Dutch book)  Theorem}
 \label{section:consistency-theorem}

  \index{Dutch book theorem}
 \index{de Finetti!consistency theorem}
 \index{de Finetti!Dutch book theorem}
\index{theorem!Dutch book}
\index{theorem!de Finetti consistency theorem for finite
boolean algebras}
\index{consistency}
\index{consistency!=extendability to a state}

\begin{theorem} 
{\rm (De Finetti's consistency   theorem,
\cite[pp. 309-313]{def-fundmath})}
\label{theorem:dutch}\,\,\,
For any boolean algebra $A$
with atoms  $a_1,\dots,a_n$ and corresponding
homomorphisms $$\eta_1,\dots,\eta_n$$ of $A$  into
$\yesno$,   
let $E=\{h_1,\dots,h_m\}\subseteq
A$ and $\beta\colon E\to \interval$. Then 
precisely one of the following conditions holds:

\smallskip
\begin{itemize}
\item[(i)] 
$\beta$ is   inconsistent in $A$ in the sense of
Definition \ref{definition:consistency}.  Thus
 there is a function   $ s\colon  E \to \mathbb R$ such that,
 letting   $s_1=s(h_1),\dots,s_m=s(h_m)$,
 every coordinate of the  vector
$M(s_1,\dots,s_m)\in \mathbb R^n$ is $<0$. 
In symbols, 
$$
\sum_{j=1}^m s(h_j)(\beta(h_j)-\eta_i(h_j)) < 0\,\,
\mbox{ for every }\,\,  i=1,\dots,n.
$$
 \item[(ii)]  $\,\,\beta$ is  extendable to a  state $\sigma$ of $A$.
  \end{itemize}

 \medskip
 \noindent
Stated otherwise,  $\beta$ is consistent in $A$
 iff  it is extendable to a state    of $A$.
\end{theorem}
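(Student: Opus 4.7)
The plan is to reduce the theorem to a convex-geometric separation statement in $\mathbb{R}^m$ and then invoke the separating hyperplane theorem. By Theorem~\ref{theorem:geometric-isomorphism} I identify $A$ with $\yesno^n$, so every $h_j$ becomes a $0/1$ vector in $\mathbb{R}^n$, and by Corollary~\ref{corollary:tutto}(iii) the value $\eta_i(h_j)$ equals the $i$th coordinate of $h_j$. By Corollary~\ref{corollary:vi-sigma}, the states of $A$ correspond bijectively with vectors $v=(\lambda_1,\dots,\lambda_n)$ in the simplex $\mathsf{S}=\conv(e_1,\dots,e_n)$ via $\sigma(h)=v^T h$. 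Writing $p_i=(\eta_i(h_1),\dots,\eta_i(h_m))^T\in\yesno^m$ for the $i$th ``possible world restricted to $E$'' and $b=(\beta(h_1),\dots,\beta(h_m))^T\in\interval^m$, the existence of a state extending $\beta$ is equivalent to having $\lambda_i\ge 0$ with $\sum_i\lambda_i=1$ and $\sum_i\lambda_i p_i=b$, i.e., to the membership $b\in C:=\conv(p_1,\dots,p_n)\subseteq\mathbb{R}^m$.

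First I would dispatch the implication (ii)$\Rightarrow\neg$(i). If $b=\sum_i\lambda_i p_i$ is such a convex representation, then for any stake function $s\colon E\to\mathbb{R}$ the numbers $x_i:=\sum_{j=1}^m s(h_j)(\beta(h_j)-\eta_i(h_j))$ satisfy
\[
\sum_{i=1}^n\lambda_i x_i=\sum_{j=1}^m s(h_j)\Bigl(\beta(h_j)-\sum_{i=1}^n\lambda_i\eta_i(h_j)\Bigr)=0,
\]
so the $x_i$ cannot all be strictly negative. Hence $\beta$ is consistent, and the two alternatives (i), (ii) are mutually exclusive.

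The substantive direction is the converse: if $b\notin C$ I must produce an inconsistency witness. The set $C$ is the convex hull of finitely many points, hence a nonempty compact convex subset of $\mathbb{R}^m$ disjoint from $\{b\}$. The separating hyperplane theorem then produces a nonzero vector $s\in\mathbb{R}^m$ and a scalar $\alpha$ with
\[
s^T b<\alpha\le s^T y\quad\text{for every }y\in C.
\]
Specializing $y=p_i$ gives $\sum_{j=1}^m s(h_j)(\beta(h_j)-\eta_i(h_j))<0$ for each $i=1,\dots,n$, where $s(h_j):=s_j$; this is precisely the inconsistency condition of Definition~\ref{definition:consistency}.

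The main obstacle is that the separating hyperplane theorem has not been established earlier in the paper; to stay self-contained at the announced undergraduate level, I would prove the case needed here directly via the nearest-point projection. Choose $q\in C$ minimizing $\|b-y\|$ over $y\in C$, which exists by compactness of $C$, and set $s:=q-b\ne 0$. For any $y\in C$ the whole segment $q+t(y-q)$, $t\in[0,1]$, lies in $C$ by convexity, so the function $t\mapsto\|b-(q+t(y-q))\|^2$ is minimized at $t=0$; differentiating at $t=0^+$ gives $s^T(y-q)\ge 0$. With $\alpha:=s^T q$ this yields $\alpha\le s^T y$ on $C$ while $s^T b=s^T q-\|s\|^2<\alpha$, which supplies the separating hyperplane and completes the proof.
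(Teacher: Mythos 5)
Your proof is correct, but it follows a genuinely different route from the paper's. The paper works in $\mathbb{R}^n$: it forms the $n\times m$ matrix $M$ with entries $\beta(h_j)-\eta_i(h_j)$ and invokes Gordan's theorem of the alternative (Theorem \ref{theorem:gordan}), whose proof in turn requires separating the linear subspace $\range(M)$ from the open negative octant; since the origin lies on the \emph{boundary} of the relevant convex set, that separation needs the supporting-hyperplane theorem (Fact \ref{fact:supporting}) and its limiting argument with sequences of nearest-point projections. You instead dualize: you work in $\mathbb{R}^m$, recast condition (ii) as the membership $b\in C=\conv(p_1,\dots,p_n)$ of the book-vector in the convex hull of the possible-world vectors, and when $b\notin C$ you separate the \emph{point} $b$ from the \emph{compact} convex set $C$ by a single nearest-point projection --- exactly the easy strict-separation statement the paper records as Fact \ref{fact:molteplice}(iii)--(iv), with no boundary-point subtleties and no need for Gordan's theorem or Lemma \ref{lemma:separation}. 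Your mutual-exclusivity argument (the convex combination $\sum_i\lambda_i x_i=0$ forces some $x_i\ge 0$) is the same computation as the paper's, just phrased without the matrix. What the paper's route buys is the explicit isolation of Gordan's theorem as a reusable theorem of the alternative; what yours buys is a shorter, more self-contained path that stays within the elementary half of the separation machinery, plus the conceptually transparent reformulation ``consistent $=$ the book is a mixture of possible worlds.'' Both are valid at the intended level; do note explicitly that $C$ is compact because it is the convex hull of finitely many points, so the minimizer $q$ exists, and that the derivative argument is legitimate because the squared distance is a quadratic polynomial in $t$.
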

\begin{proof}  In view of Theorem 
\ref{theorem:geometric-isomorphism},
by identifying
$A$ with the boolean algebra $\yesno^n$ of the
vertices of the $n$-cube $\interval^n$, each atom 
$a_i$ is geometrically realized as the 
 $i$th standard   basis vector  $e_i$ in $\mathbb R^n.$
Furthermore, each element  $h_j$ is realized as
  a vertex of the $n$-cube $\interval^n.$
Since the isomorphism $\iota$ 
of Theorem \ref{theorem:geometric-isomorphism}
now coincides with  identity, by 
 Corollary \ref{corollary:tutto}(ii) we can write
\begin{equation}
\label{equation:uguaglianze}  
\mbox{$\eta_i=\theta_i=\theta_{e_i}\in \hom(\yesno^n)$
for each
$i=1,\dots,n$.
}
\end{equation}

\medskip
\noindent
{\it Proof that if condition (i) fails then
 condition  (ii) holds.}
  \index{Gordan's theorem}
 \index{theorem!Gordan}
Suppose condition (i)  fails. 
  Gordan's theorem\footnote{A self-contained   proof
  of Gordan's theorem is given  in  \ref{theorem:gordan}.} 
  gives a nonzero column vector $(u_1,\dots,u_n)\in \mathbb R^n$
  with each coordinate $u_i\geq 0$ and $u^TM=0=$
  the origin in $\mathbb R^m.$ 
  Without loss of generality, $u_1+\dots + u_n=1$.
Summing up, 
   \begin{equation}
   \label{equation:rombo}
 u^TM=0, \,\,\,\,\,\,\,0\leq u_i\,\,\,
(i=1,\dots, n),\,\,\,
\,\,\,\,u_1+\dots + u_n=1.
   \end{equation}
For  each  $j=1,\dots,m$\,\,  
the $j$th column  $C_j$  of 
the $n\times m$  matrix $M$  in  (\ref{equation:emme})
has the form 
$C_j=C_{j,{\rm left}}-C_{j,{\rm right}},$
where  
  \begin{equation}
 \label{equation:left}
 C_{j,{\rm left}}=\underbrace{(\beta(h_j),
 \dots,\beta(h_j))}_{\mbox{\tiny $i$  times}}.
 \end{equation}
For each
  $i=1,\dots,n$,   the $i$th term $(C_{j,{\rm right}})_i$  
satisfies the identities 
  \begin{equation}
 \label{equation:right}
 (C_{j,{\rm right}})_i=\eta_i(h_j)=\theta_i(h_j)=
e_i^Th_j=i \mbox{th coordinate of }h_j\in \yesno^n.
\end{equation}
This follows from  
\ref{theorem:geometric-isomorphism}-\ref{corollary:tutto}  and 
\eqref{equation:uguaglianze}.\,\,
By \eqref{equation:rombo}-\eqref{equation:right}, the
 assumption  $u^TM=0$, \,\,\,
 (i.e., $u^TC_{j,\rm right}=u^TC_{j,\rm left}$ for each
 $j=1,\dots,m$)\,\,\, 
 amounts to writing 
\begin{equation}
\label{equation:lurena}
\beta(h_j)=u^T C_{j,\rm right}
=u^T h_j, \,\,\,\mbox{for each}\,\,j =1,\dots,m.
\end{equation}
Let the state $\sigma$
of the boolean algebra $\yesno^n$ be defined by 
$$
\sigma(e_i) =u_i,\,\,\,(i=1,\dots,n).
$$
By Corollary \ref{corollary:vi-sigma}(iii) and 
 (\ref{equation:lurena}),
$
\sigma(h_j)
= u^T h_j
 =\beta(h_j)
$
for each $j=1,\dots,m.$
This shows that $\sigma$ extends $\beta$, whence
condition (ii) holds.  

\medskip
\noindent
{\it Proof that conditions (i) and (ii) are incompatible.}
\,\,\,By way of contradiction, assume both conditions hold.
As  in   Corollary \ref{corollary:vi-sigma}, let 
   the vector  $v_\sigma\in \mathsf S$
 be defined by   $v_\sigma^T w =\sigma(w)$ for all 
 $w\in \yesno^n$. 
In  particular, 
$v_\sigma^T h_j= \sigma(h_j)$ for each 
$j =1,\dots,m$. Since all coordinates of $v_\sigma$ are $\geq 0$
and their sum is equal to  1, then
 $$v_\sigma^TC_{j,\rm left}
=\beta(h_j) \,\,\,\mbox{ for all }\,\,\, j =1,\dots,m.$$
 Further,  by (\ref{equation:right}),
$v_\sigma^TC_{j,\rm right}=v_\sigma^Th_j=\sigma(h_j)$.
Since, by condition  (ii),  $\sigma$ extends $\beta$ then
 $$v_\sigma^TC_{j,\rm left}
=v_\sigma^TC_{j,\rm right}
\,\,\, \mbox{ for all }\,\,\, j=1,\dots,m,$$
whence $v^T_\sigma M=0.$ 
As in   Gordan's theorem \ref{theorem:gordan},
conditions (i) and (ii) imply  the   contradiction
$$
0\,\,\, \not=\,\,\,
 \underbrace{{\,\,\,}v_\sigma^T{\,\,\,}}_{v_\sigma \in \mathsf S}\,\,\,
   \underbrace{ (M(s_1,\dots,s_m))}_{ s_j<0,\,\, 
   \mbox{\tiny for each }j =1,\dots,m}
=\,\,\, (v_\sigma^T M)(s_1,\dots,s_m)\,\,\,=
\,\,\,0. 
$$
We have thus shown that conditions (i) and (ii) are incompatible.

\medskip
\noindent {\it Conclusion.}
If $\beta$ is consistent then condition (i) fails, whence,  by
condition (ii), $\beta$ is extendable to a state.  Conversely, if condition (ii) holds then
$\beta$ cannot satisfy condition (i), i.e., $\beta$ is consistent.
\hfill{$\Box$}
\end{proof}

\index{de Finetti!event} 


\subsection{Interlude: Logical consistency and 
 de Finetti's consistency}
\label{section:digression}
\index{infinite boolean algebra} 
This section requires some  acquaintance with
the syntax and semantics of boolean propositional logic:
formulas, truth-valuations, logical equivalence and
consistency. 
The first pages of any book on mathematical
 logic will provide all necessary background.
Readers not interested in 
the relationships between logical consistency and de
Finetti's consistency can  skip this section 
on a first reading, as
no results proved here will be used  in the rest of 
this paper. 

 \index{logically consistent set} 
 \index{free!boolean algebra $\mathsf F_n$}
 \index{$\mathsf F_n$ the free $n$-generator boolean algebra}
 \index{boolean!formulas as codes}
 \index{boolean!formulas up to logical equivalence}

Let  $\mathsf F_n$ be the free boolean algebra
   over the free generating set $\{X_1,\dots,X_n\}$
   introduced  in
Section \ref{section:recap}.
   Elements of    $\mathsf F_n$ are boolean formulas
 $\phi$  in the variables $X_1,\dots,X_n$ 
   up to logical equivalence. The
   boolean operations naturally
   act on  $\mathsf F_n$ as the boolean
   connectives  act on formulas.
Thus,  e.g., let 
   $\phi$ be a formula  and 
   $\tilde\phi\in \mathsf F_n$
   the (infinite!) set of all formulas logically equivalent to $\phi$.
   Then     for any formula $\psi$ logically equivalent to $\phi$
   the equivalence class $\widetilde{\neg\psi}$
    of  $\neg\psi$ coincides with
    the complement  $\neg\tilde\phi\in  \mathsf F_n$  of the
    equivalence class $\tilde\phi$.   
Any  formula $\psi$ logically equivalent to $\phi$ is said to 
{\it code} the element  $\tilde\phi$ of $ \mathsf F_n$.
    
For each  $i=1,\dots,n,\,\,\,$ 
$X_i$   is a special kind
    of a formula, known as a {\it variable}.
Traditionally,
 the same notation is used for  $X_i$ and for
 the  equivalence class  $\widetilde{X_i}\in  \mathsf F_n$.
  Likewise,  $\neg,\wedge, \vee$ denote both
  the connectives acting on formulas, and the
  operations of the boolean algebra $ \mathsf F_n$.

A  {\it truth-valuation}
is a  $\yesno$-valued function $v$ defined on
  all formulas  $\psi=\psi(X_1,\dots,X_n)$, 
  having the following properties:
  $$
  v(\neg\psi)=1-v(\psi),\,\,\,v(\psi\wedge \phi)=\min(v(\psi),
  v(\phi)),\,\,\,v(\psi\vee \phi)=\max(v(\psi),
  v(\phi)).
  $$
  The non-ambiguity of
  the syntax ensures that $v$ is uniquely determined
  by the values it assigns to the variables   $X_1,\dots,X_n$.
  
\index{variable}
 \index{free!generator}   
 \index{truth-valuation}
 \index{logical!consistency}

%

Let the  formulas
 $\phi_1,\dots,\phi_m$ respectively code elements
  $h_1,\dots,h_m$ of $\mathsf F_n$. 
Then the set   $\{\phi_1,\dots,\phi_m\}$ is said to be
  {\it logically consistent}  if some  truth-valuation 
  assigns value 1 to each formula  $\phi_i.$
In equivalent algebraic terms, 
  $\eta(h_1)=\dots=\eta(h_m)=1$ for some 
  $\eta\in \hom(\mathsf F_n)$.
  
  \index{logically consistent set of formulas}
  

  \begin{corollary}
  \label{corollary:justification} 
\,\,\,  Let  $E=\{h_1,\dots,h_m\}$ be a finite subset
of the boolean algebra $\mathsf F_n$. Let  the function
 $\beta\colon E\to \yesno$  assign the constant
 value $1$ to each element of  $E$.
  Then the following conditions  are equivalent:
  \begin{itemize}
  \item[(i)]  
 $E$ can be coded by a  logically consistent set
 $E'$  of boolean  formulas.
 
 \smallskip
 \item[(ii)] $\,$  $\beta$  is (de Finetti) consistent in the sense of
Definition \ref{definition:consistency}.

 \smallskip
  \item[(iii)]   $\,\,\,\beta$ can be extended to
  a homomorphism  of $\mathsf F_n$ into $\yesno$. 
  
   \smallskip
  \item[(iv)]   $\,\,\,\beta$ can be extended to
  a state of $\mathsf F_n$.
  \end{itemize} 
\end{corollary}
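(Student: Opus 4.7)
The plan is to close the loop by showing \,(iv)$\Rightarrow$(ii)$\Rightarrow$(iv)$\Rightarrow$(iii)$\Rightarrow$(iv), together with (i)$\Leftrightarrow$(iii). Three of the four arrows are immediate or invoke already-proven material: (ii)$\Leftrightarrow$(iv) is exactly the Dutch Book Theorem \ref{theorem:dutch} applied to $A=\mathsf F_n$; (iii)$\Rightarrow$(iv) is the remark just after Definition \ref{definition:state} that every homomorphism into $\yesno$ is itself a state. The non-trivial work concentrates in (i)$\Leftrightarrow$(iii) (a matter of translation between syntax and algebra) and in (iv)$\Rightarrow$(iii), which is where the special assumption that $\beta$ takes the constant value $1$ is used.

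For (i)$\Leftrightarrow$(iii), I would invoke the canonical one-one correspondence between truth-valuations $v$ on boolean formulas in $X_1,\dots,X_n$ and homomorphisms $\eta\in\hom(\mathsf F_n)$: the non-ambiguity of the syntax gives $v(\psi)=\eta(\tilde\psi)$ for every formula $\psi$ whose equivalence class is $\tilde\psi\in\mathsf F_n$, and both $v$ and $\eta$ are determined by their values on the generators $X_i$. Thus a coding set $E'=\{\phi_1,\dots,\phi_m\}$ of $E$ is logically consistent iff some truth-valuation sends every $\phi_i$ to $1$, iff some $\eta\in\hom(\mathsf F_n)$ sends every $h_i$ to $1=\beta(h_i)$, i.e.\ extends $\beta$.

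The main obstacle is the implication (iv)$\Rightarrow$(iii): starting from a \emph{state} $\sigma$ extending $\beta$, one must exhibit a two-valued homomorphism still extending $\beta$. Here the hypothesis $\beta\equiv 1$ is crucial. From $\sigma(h_j)=1$ and the additivity axiom applied to the incompatible pair $h_j,\neg h_j$, I get $\sigma(\neg h_j)=0$ for every $j$. A standard consequence of additivity (write $x\vee y=x\vee(\neg x\wedge y)$ with incompatible summands and use monotonicity) is the subadditivity inequality $\sigma(x_1\vee\cdots\vee x_k)\le \sigma(x_1)+\cdots+\sigma(x_k)$; applying it to $\neg h_1,\dots,\neg h_m$ yields
$$\sigma(\neg h_1\vee\cdots\vee\neg h_m)\le \sigma(\neg h_1)+\cdots+\sigma(\neg h_m)=0,$$
hence $\sigma(h_1\wedge\cdots\wedge h_m)=1$. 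In particular the meet $h:=h_1\wedge\cdots\wedge h_m$ is nonzero, so by Proposition \ref{proposition:atomic1}(i) it dominates some atom $a\in\at(\mathsf F_n)$. The homomorphism $\eta_a$ of Corollary \ref{corollary:tutto}(i) then satisfies $\eta_a(h_j)=1$ for every $j$ (because $a\le h\le h_j$), so $\eta_a$ extends $\beta$, proving (iii). This closes the cycle of equivalences.
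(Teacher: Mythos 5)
Your proposal is correct, but it closes the cycle of equivalences differently from the paper. Both treatments dispose of (ii)$\Leftrightarrow$(iv) by citing Theorem \ref{theorem:dutch} and of (iii)$\Rightarrow$(iv) (resp.\ (iii)$\Rightarrow$(ii)) by the remark that homomorphisms into $\yesno$ are states, and both handle (i)$\leftrightarrow$(iii) by the truth-valuation/homomorphism correspondence. The difference is where the real work goes. The paper proves (ii)$\Rightarrow$(i) by contraposition using the betting semantics itself: if no truth-valuation satisfies $E'$, Betty stakes $s(h_j)=-1$ on every event, collects $m$ up front, and in every possible world at least one $h_j$ is evaluated $0$, so her net profit is at least $1$ in every world and $\beta$ is inconsistent --- an explicit Dutch book. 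You instead prove (iv)$\Rightarrow$(iii) algebraically: from $\sigma(h_j)=1$ for all $j$, subadditivity (which follows from additivity and monotonicity exactly as you indicate) forces $\sigma(h_1\wedge\cdots\wedge h_m)=1$, so the meet is nonzero, dominates an atom $a$ of the finite algebra $\mathsf F_n$ by Proposition \ref{proposition:atomic1}(i), and $\eta_a$ from Corollary \ref{corollary:tutto}(i) extends $\beta$. Your route buys a proof that never re-enters the betting definition once Theorem \ref{theorem:dutch} is invoked, and it actually shows a bit more (every state extending $\beta$ concentrates on the atoms below $h_1\wedge\cdots\wedge h_m$); the paper's route buys an explicit system of stakes witnessing inconsistency, in keeping with the expository thrust of the whole paper. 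Either decomposition is complete and correct.
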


\begin{proof}
 (ii)$\Leftrightarrow$(iv)\,  By  Theorem 
\ref{theorem:dutch}.

\smallskip 
 (i)$\Rightarrow$(iii)\, By hypothesis, some  truth-valuation $v$
 assigns   1 to all formulas in  $E'.$ 
 Equivalently,   $\eta(h_1)=\dots=\eta(h_m)=1$ for some 
  $\eta\in \hom(\mathsf F_n)$.
 Thus,  $\eta$ extends $\beta.$
 
 \smallskip
(iii)$\Rightarrow$(ii)  Every homomorphism of $\mathsf F_n$ into $\yesno$
is a state. Now apply   Theorem 
\ref{theorem:dutch}.

\smallskip
(ii)$\Rightarrow$(i)  Assume (i) does not hold, with the intent
of proving  that $\beta$  is inconsistent in the sense
of    Definition \ref{definition:consistency}. 
 To this purpose,
for each $j=1,\dots,m$,\,\,\,
Betty places the  stake  $s(h_j)=-1$.
As a result, she  now receives $m$   from Bookie.
 In any possible
 world  $\eta\in \hom(\mathsf F_n)$
she will have to return 1  to Bookie for each event
to which $\eta$ assigns the value 1.
  However, at least one event
$h=h_\eta\in E$  will be evaluated
0 by $\eta$, because
 the assumed logical inconsistency  of  $E'$
 entails that no  truth-valuation $v$ assigns 
value 1 to all formulas of  $E'.$ 
 Thus  Betty has a net  profit  $\geq 1$
  in any possible world  $\eta$. 
By Definition
 \ref{definition:consistency}, $\beta$ is inconsistent.
\hfill{$\Box$}
\end{proof}

  Using the fact that
 every boolean algebra $A$ is the homomorphic
 image of some free boolean algebra,
 with some extra work 
  Corollary \ref{corollary:justification}
  can be shown to hold for $A$. 
  

\subsection{De Finetti's Consistency Theorem for all boolean algebras}
\label{section:infinite}
\index{infinite boolean algebra}

Generalizing Theorem \ref{theorem:dutch},  in
Theorem  \ref{theorem:dutch-general}  we  will
 show  that our restriction
to finite boolean algebras is inessential. 
So let us relax this restriction: 
$$
\fbox{
 \mbox{ $A$\,\, henceforth  
stands for an arbitrary (finite or infinite) boolean algebra.  }
}
$$

 \index{$\hom(A),$ the homomorphisms of $A$  into  $\yesno$} 
 \index{consistent!book}

\medskip
\noindent
The set $\hom(A)$ of  homomorphisms
of $A$ into the two-element boolean algebra  $\yesno$ 
is no longer indexed by the atoms of $A$.
Suffice to say that  an infinite boolean
algebra $A$ need not have atoms.
Thus the definition of consistency
gets  the following  form, which is equivalent to
Definition   \ref{definition:consistency}  for finite boolean
algebras, and encompasses the general case 
when the boolean algebra $A$ is infinite:

\index{inconsistent subset of a boolean algebra}
\index{consistent!subset of a boolean algebra}  
\index{inconsistent book}
  
\begin{definition}
\label{definition:consistency-infinite-ba}
{\rm Let  $E =\{h_1,\dots h_m\} $ be a subset of
  a boolean algebra $A$ and 
 $\beta \colon E \to  \interval$
 a function.  Then $\beta$ is said to be
  {\em inconsistent in $A$}
if  there is a function  $ s\colon  E \to \mathbb R$ such that 
 \begin{equation}
\label{equation:inconsistency-general}
\sum_{j=1}^m s(h_j)(\beta(h_j)-\eta(h_j)) < 0
\,\,\mbox{ for every }\eta\in \hom(A).
\end{equation}
If $\beta$ is not inconsistent  in $A$
 we say it is {\em consistent in $A$}.
 }
\end{definition}
The definition of a {\it state} of $A$ is verbatim  the
same as Definition \ref{definition:state}.

\index{state!of a boolean algebra}


\begin{lemma}
\label{lemma:inserimento}
Let  $B$ be a {\em finite} boolean algebra and
$B'$ a subalgebra of $B$.
 
\smallskip
\noindent
(i) The homomorphisms of 
$B'$ into $\yesno$
are precisely the restrictions to $B'$ of the homomorphisms
of $B$ into $\yesno.$

\smallskip
\noindent
(ii) The states of $B'$ are precisely the restrictions to $B'$
of the states of $B$. 
\end{lemma}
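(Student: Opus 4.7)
\medskip
\noindent
\textbf{Proof plan.} The plan is to exploit the geometric/atomic representation (Theorem \ref{theorem:geometric-isomorphism} and Corollary \ref{corollary:tutto}) for \emph{both} $B$ and $B'$, connecting them via a canonical map $\pi\colon \at(B)\to\at(B')$ that sends each atom of $B$ to the unique atom of $B'$ above it. Existence of such $\pi(a)$ uses that $a\neq 0$ together with the decomposition $1=\bigvee_{b'\in\at(B')} b'$ in $B'$ (Proposition \ref{proposition:sum-of-atoms}(i)): from $a = a\wedge 1 = \bigvee_{b'}(a\wedge b')$ and minimality of $a$ in $B$, some $a\wedge b'$ equals $a$, i.e.\ $a\leq b'$. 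Uniqueness is immediate since distinct atoms of $B'$ are incompatible. A second preparatory observation is that $\pi$ is \emph{surjective}: every atom $b'$ of $B'$ is a nonzero element of $B$ and hence dominates some atom $a$ of $B$ by Proposition \ref{proposition:atomic1}(i), and that $a$ satisfies $\pi(a)=b'$ by uniqueness.

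For part (i), one direction is trivial: the restriction of any homomorphism $B\to\yesno$ is a homomorphism $B'\to\yesno$. For the nontrivial direction, given $\eta'\in\hom(B')$ with corresponding atom $b'_{\eta'}$ of $B'$ (Corollary \ref{corollary:tutto}(i)), pick any atom $a\in\at(B)$ with $\pi(a)=b'_{\eta'}$ by surjectivity, and consider $\eta_a\in\hom(B)$. To check $\eta_a\!\restrict\! B'=\eta'$ on $x\in B'$: if $b'_{\eta'}\leq x$ then $a\leq b'_{\eta'}\leq x$, so $\eta_a(x)=1=\eta'(x)$; conversely, if $a\leq x$ with $x\in B'$, Proposition \ref{proposition:atomic1}(ii) applied \emph{inside} $B'$ forces $b'_{\eta'}\leq x$ or $b'_{\eta'}\leq\neg x$, and the latter would give $a\leq b'_{\eta'}\leq\neg x$, contradicting $a\leq x$ and $a\neq 0$.

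For part (ii), again the restriction of any state of $B$ is clearly a state of $B'$, so the content is in the converse. Given a state $\sigma'$ of $B'$, for each $b'\in\at(B')$ choose one atom $a_{b'}\in\at(B)$ with $\pi(a_{b'})=b'$, and define $\sigma$ on $\at(B)$ by $\sigma(a_{b'})=\sigma'(b')$ and $\sigma(a)=0$ for the remaining atoms. Extend by $\sigma(x)=\sum_{a\in\at(B),\,a\leq x}\sigma(a)$. Additivity on incompatible pairs is built in because incompatible elements of $B$ dominate disjoint sets of atoms, and $\sigma(1)=\sum_{b'\in\at(B')}\sigma'(b')=\sigma'(1)=1$. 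To verify that $\sigma$ extends $\sigma'$, fix $x\in B'$ and note that for $a\in\at(B)$ the condition $a\leq x$ is equivalent to $\pi(a)\leq x$: if $a\leq x$ then, as in part (i), Proposition \ref{proposition:atomic1}(ii) applied in $B'$ forces $\pi(a)\leq x$; the reverse implication is immediate from $a\leq\pi(a)$. Hence
\[
\sigma(x)=\sum_{b'\in\at(B'),\,b'\leq x}\sigma'(b')=\sigma'(x),
\]
by additivity of $\sigma'$ together with $x=\bigvee\{b'\in\at(B'):b'\leq x\}$ (Proposition \ref{proposition:sum-of-atoms}(i)).

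The main obstacle is the tiny subtlety in both parts that $B'$, while sitting inside $B$, has its \emph{own} set of atoms which typically refines nothing finer than itself; one must remember to apply Proposition \ref{proposition:atomic1}(ii) \emph{inside} $B'$ (using that complementation in $B'$ agrees with complementation in $B$, since $B'$ is a subalgebra) rather than inside $B$. Once the projection $\pi\colon\at(B)\to\at(B')$ is in place, everything else reduces to bookkeeping with sums of atoms.
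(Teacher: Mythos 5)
Your proposal is correct and follows essentially the same route as the paper: your projection $\pi\colon\at(B)\to\at(B')$ is exactly the paper's partition of $\at(B)$ into the fibers $[a]$ over the atoms $a$ of $B'$, and in both parts you, like the paper, pick one representative atom of $B$ in each fiber to build the extending homomorphism (resp.\ state). The only cosmetic difference is that you verify the extension property directly via Proposition \ref{proposition:atomic1}(ii) applied inside $B'$, where the paper routes the same check through its identity \eqref{equation:vee}.
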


\begin{proof}
(i)
Let 
 $\epsilon \in \hom(B)$. Trivially,  the restriction
 $\epsilon \restrict B'$ of $\epsilon$ to $B'$
 is a homomorphism of $B'$ into $\yesno$.

Conversely, let  $\eta\in \hom(B'),$
with the intent of
extending $\eta$ to a some
$\eta^*\in \hom(B)$. 
By  Corollary
\ref{corollary:tutto}(i) we may write   $\eta=\eta_e$
for a unique atom $e$ of $B'$.
By Proposition \ref{proposition:sum-of-atoms},
each atom $a$ of $B'$ is a nonzero
element of $B$, and dominates a nonempty set
$[a]$ of atoms of $B$.
  If  $a \not= b\in \at(B')$  then $[a]\cap [b]=\emptyset.$ 
  (For  otherwise, some atom $c$ of $B$  is dominated by both
$a$ and $b$, whence  $c$  is dominated by $a\wedge b=0$,
which is impossible.) 
 As a consequence, upon writing  
 $$
c\thickapprox d  \mbox{\,\,\,\,iff $c$ 
and $d$ are dominated by the same atom
$a$ of $B'$, $(c,d\in\at(B))$,}
 $$
we obtain an equivalence relation $\thickapprox$ 
over $\at(B)$.
The  function
 $a\mapsto [a], \,\,a\in \at(B')$  maps 
 $\at(B')$ onto the set of $\thickapprox$-equivalence
 classes.
Furthermore, for  each  $x\in B'$ 
letting $a_1,\dots,a_m$ be the atoms of $B'$ dominated
by $x$, we have the identity
\begin{equation}
\label{equation:vee}
x=  \bigvee\{b\in\at(B)  \mid b\in  ([a_1] \vee\dots\vee[a_m]) \}.
\end{equation}
Turning to our homomorphism $\eta_e\in \hom(B')$,
let us arbitrarily pick an atom  $e^*\in [e]$,  and let 
$\eta_{e^*}\in \hom(B)$ be the corresponding
homomorphism.  By a final application of  Corollary
\ref{corollary:tutto},  for every $y\in B$,\,\,\,
$\eta_{e^*}(y)=1$ iff $y$ dominates $e^*.$
An easy verification using \eqref{equation:vee} shows that
the homomorphism  $\eta^*=\eta_{e^*}$ is
an  extension of $\eta_e.$

\medskip
(ii)   The restriction to $B'$ of
any state of $B$ 
 is a state of $B'$.  
 Conversely, let $\tau$ be a state of $B'.$
 With the notation of (i),
  for each $a\in \at(B')$ arbitrarily pick 
 an atom $a^*\in [a]\subseteq \at(B)$.
 Let  the function $\tau^\circ \colon \at(B)\to \yesno$ be defined by
 $$
 \mbox{$\tau^\circ(a^*)=\tau(a)$ for each $a\in \at(B')$, and
  $\tau^\circ(b)=0$ for all other atoms of $B.$
  }
  $$ 
By  Corollary  \ref{corollary:vi-sigma}(iii),
 any state of the finite boolean algebra
$B$ is uniquely determined by the values it
gives to the atoms of $B$.
It is now easy to verify  that    $\tau^\circ$ uniquely determines
a  state $\tau^*$ of $B$ which extends $\tau.$
   \hfill{$\Box$}
 \end{proof}
 
 \medskip
 
 \begin{lemma}
 \label{lemma:missing} Let $B'$ be a finite
 subalgebra of an {\em infinite}  boolean algebra
 $A$. 
 
 \smallskip
(i)  
 The states of $B'$ are precisely the restrictions to $B'$
of the states of $A$. 

 \smallskip
(ii) 
The homomorphisms of 
$B'$ into $\yesno$
are precisely the restrictions to $B'$ of the homomorphisms
of $A$ into $\yesno.$
 \end{lemma}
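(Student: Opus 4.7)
The plan is to reduce both parts of the lemma to the finite-subalgebra case already established in Lemma \ref{lemma:inserimento}, via a compactness argument in a suitable product space. One direction is immediate for each part: if $\sigma$ (resp.\ $\epsilon$) is a state (resp.\ a homomorphism) of $A$, then its restriction $\sigma\restrict B'$ (resp.\ $\epsilon\restrict B'$) automatically inherits the normalization and additivity (resp.\ the three complemented-lattice identities), hence is a state (resp.\ homomorphism) of $B'$. So the real work lies in extending a given $\tau\in S(B')$ or $\eta\in\hom(B')$ to all of $A$.

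For (ii), I would work inside the compact space $\yesno^{A}$ endowed with the product (Tychonoff) topology. For each pair $x,y\in A$, the three homomorphism identities $\phi(\neg x)=1-\phi(x)$, $\phi(x\wedge y)=\min(\phi(x),\phi(y))$, $\phi(x\vee y)=\max(\phi(x),\phi(y))$ each cut out a closed subset of $\yesno^{A}$; so does the restriction condition $\phi(b)=\eta(b)$, one closed subset for each $b\in B'$. The set of extensions of $\eta$ to a homomorphism of $A$ is the intersection of this (infinite) family of closed sets, so by compactness it suffices to verify the finite intersection property. Given any finite subfamily, I would collect the finitely many elements of $A$ mentioned into a single finite set, take the finite subalgebra $B_{F}$ of $A$ generated by $B'$ together with those elements, apply Lemma \ref{lemma:inserimento}(i) to extend $\eta$ to some $\eta_{F}\in\hom(B_{F})$, and pad $\eta_{F}$ by $0$ on $A\setminus B_{F}$; the resulting element of $\yesno^{A}$ meets every closed condition of the chosen subfamily.

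The non-trivial direction of (i) is entirely parallel, now run in the compact product $[0,1]^{A}$: the state conditions $\phi(1)=1$ and $\phi(x\vee y)=\phi(x)+\phi(y)$ whenever $x\wedge y=0$, together with $\phi(b)=\tau(b)$ for each $b\in B'$, are each closed subsets of $[0,1]^{A}$; any finite subfamily is witnessed by invoking Lemma \ref{lemma:inserimento}(ii) on the finite subalgebra generated by $B'$ and the finitely many elements entering that subfamily, and padding arbitrarily outside.

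The only real care required is the bookkeeping: one must verify that the finitely many elements of $A$ appearing in any finite subfamily of the chosen closed conditions can always be enveloped by a single \emph{finite} subalgebra of $A$ containing $B'$, and that the padded extension does indeed satisfy all the selected closed conditions. Both points rest on the elementary fact that the subalgebra of a boolean algebra generated by a finite set is finite, together with a routine check of the padded values against the homomorphism or state conditions restricted to the finite set. Beyond that, Tychonoff compactness of the product of copies of the finite factor $\yesno$ (or of $[0,1]$) does all the remaining work, and no further obstacle arises.
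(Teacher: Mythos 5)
Your proof is correct and follows essentially the same route as the paper: both reduce the extension problem to the finite case of Lemma \ref{lemma:inserimento} and then invoke Tychonoff compactness of $\interval^{A}$ (resp.\ $\yesno^{A}$) via the finite intersection property. The only cosmetic difference is that you index the closed sets by individual algebraic conditions while the paper indexes them by finite subalgebras $B\supseteq B'$; the two families have the same intersection and the same finite-subfamily witnesses.
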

 
 \begin{proof}
 (i) Let $\sigma$ be a state of $B'.$
 By   Lemma \ref{lemma:inserimento}(ii),  for  every
 {\it finite}  subalgebra $B$ of $A$ containing $B'$,
 $\sigma$ is extendable to a state of $B$. 
Let the  set  $\tilde{S}_{\sigma,B}\subseteq \interval^A$
 be defined by
 $$
 \tilde{S}_{\sigma,B}=\{f\colon A\to \interval
\, \mid\,  f\restrict B \mbox{ is a state of $B$ extending } \sigma\}.
 $$
By  definition
 of  the product topology of $ \interval^A$,
   $\tilde{S}_{\sigma,B}$ is a nonempty   {\it closed}
 subset of the  compact Hausdorff space $\interval^A$.

  For any
 finite family of finite subalgebras $B_1,\dots,B_u$
   of $A$ containing
 $B'$ the intersection  $I$ of the closed sets
 $S_{\sigma,B_1},\dots,S_{\sigma,B_u}$  is nonempty. 
 As a matter of fact, 
 letting $B_{u+1}$ be the subalgebra
  of $A$
 generated by the finite set $B_1\cup \dots \cup B_u$,
 it follows that $B_{u+1}$ is finite 
  (see  Section \ref{section:recap}), and the set
 $I$ contains the nonempty set $\tilde S_{\sigma, B_{u+1}}$. 
Next  let 
  $$
  I^*=\bigcap\{\tilde S_{\sigma,B}\mid B
  \mbox{ a finite subalgebra of $A$ containing $B'$}\}.
  $$
      From the compactness of 
  the Tychonoff cube  $\interval^A$
  it follows that 
$I^*$  is nonempty.
Any element of $I^*$ is a state of $A$ extending $\sigma.$

\medskip
(ii) A routine variant of the proof of (i)
using    Lemma \ref{lemma:inserimento}(i). 
 \hfill{$\Box$}
\end{proof}

\medskip
 
  Theorem \ref{theorem:dutch}  has the following
generalization to every (finite or infinite) boolean algebra $A$:

\index{theorem!de Finetti consistency theorem, general case}
\index{de Finetti!consistency theorem for any boolean algebra}

\index{consistency!=extendability to a state}

 \begin{theorem} 
 {\rm (De Finetti consistency theorem, general case)}
 \label{theorem:dutch-general}
Let $E$ be a finite subset of a 
  boolean algebra
$A$  and $\beta\colon E\to \interval$ a function.
Then  $\beta$ is consistent in $A$
  iff  $\beta$ is extendable  to a state of $A$.
\end{theorem}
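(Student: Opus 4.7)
The plan is to bootstrap from the finite case (Theorem \ref{theorem:dutch}) via a reduction to the finite subalgebra $B'\subseteq A$ generated by the finite set $E=\{h_1,\dots,h_m\}$. Because $E$ is finite, $B'$ is a finite boolean algebra (it is the image of a homomorphism from $\mathsf F_m$, which is finite, into $A$, as discussed in Section \ref{section:recap}). Thus the notions of Definition \ref{definition:consistency} apply to $\beta$ viewed as a function into $B'$, and Theorem \ref{theorem:dutch} is available there.

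The argument has three chained equivalences. First, I would show that $\beta$ is consistent in $A$ (in the sense of Definition \ref{definition:consistency-infinite-ba}) if and only if $\beta$ is consistent in $B'$ (in the sense of Definition \ref{definition:consistency}). For the forward implication, given any $s\colon E\to\mathbb R$ witnessing inconsistency in $B'$, the inequality $\sum_j s(h_j)(\beta(h_j)-\eta'(h_j))<0$ holds for every $\eta'\in\hom(B')$; but the restriction map $\hom(A)\to\hom(B')$, $\eta\mapsto\eta\restrict B'$, is well-defined, and since each $h_j\in B'$ we have $\eta(h_j)=\eta\restrict B'(h_j)$, so the same $s$ witnesses inconsistency in $A$. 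Conversely, if $s$ witnesses inconsistency in $A$, then for every $\eta'\in\hom(B')$ we can lift $\eta'$ to some $\eta\in\hom(A)$ by Lemma \ref{lemma:missing}(ii); once again agreement on $E$ yields $\sum_j s(h_j)(\beta(h_j)-\eta'(h_j))<0$, so $\beta$ is inconsistent in $B'$.

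Second, by the already-proved Theorem \ref{theorem:dutch} applied to the finite algebra $B'$, $\beta$ is consistent in $B'$ if and only if $\beta$ extends to a state of $B'$. Third, by Lemma \ref{lemma:missing}(i), the states of $B'$ are exactly the restrictions to $B'$ of the states of $A$, hence $\beta$ extends to a state of $B'$ if and only if it extends to a state of $A$. Composing the three equivalences gives the theorem.

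There is no real obstacle here: the two lemmas \ref{lemma:inserimento} and \ref{lemma:missing} have already done all the heavy lifting, in particular the use of compactness of the Tychonoff cube $\interval^A$ to lift states from finite subalgebras to $A$, and the atom-by-atom lifting of $\{0,1\}$-homomorphisms. The only mild care required is to verify that the subalgebra $B'$ generated by the finite set $E$ is indeed finite, and to check that the two definitions of ``inconsistent in $A$'' (Definitions \ref{definition:consistency} and \ref{definition:consistency-infinite-ba}) coincide for the finite $B'$; both reduce to the observation that $\hom(B')$ in Definition \ref{definition:consistency-infinite-ba} matches the enumeration $\{\eta_1,\dots,\eta_n\}$ indexed by atoms of $B'$ via Corollary \ref{corollary:tutto}(i).
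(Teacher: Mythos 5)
Your proposal is correct and follows essentially the same route as the paper: reduce to the finite subalgebra $B_E$ generated by $E$, apply Theorem \ref{theorem:dutch} there, and use Lemma \ref{lemma:missing}(i) to lift states and Lemma \ref{lemma:missing}(ii) to lift $\yesno$-homomorphisms back and forth between $B_E$ and $A$. The only difference is cosmetic --- you organize the argument as a chain of three equivalences rather than proving the two implications separately --- and your verifications (finiteness of $B_E$, agreement of the two consistency definitions on a finite algebra) match the paper's.
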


\begin{proof} 
Say $E=\{h_1,\dots,h_m\}.$
The
subalgebra $B_E$ of $A$
 generated by $E$ is finite. (As noted in 
 Section \ref{section:recap},  the number
 of elements of $B_E$ is $\leq 2^m.$)
 
 \medskip
 {\it  $(\Rightarrow)$}
Let   $s\colon E\to \mathbb R$ be Betty's  bet
on the events in $E$. 
Let us agree to say that
any    $\eta=\eta_s\in \hom(B_E)$ such that
\begin{equation}
\label{equation:witness}
\sum_{l=1}^m s(h_l)(\beta(h_l)-\eta_s(h_l)) \geq 0
\end{equation}
is a {\it   witness 
 of  the $s$-consistency of $\beta$ in $B_E$.}  
  Since, by assumption,  $\beta$
 is consistent in $A$, for every  bet
 $t \colon E\to \mathbb R$  there is a witness
 $\theta=\theta_t\in \hom(A)$   of the $t$-consistency of
 $\beta$ in $A$.\footnote{The definition of 
 $\theta_t\in \hom(A)$ witnessing
 the $t$-consistency of $\beta$ {\it in  $A$} 
 is precisely  the same as 
 \eqref{equation:witness}, with $t$ in place of $s.$}
A fortiori,   the restriction
 of  $\theta_t$ to $B_E$ witnesses the $t$-consistency
 of $\beta$ in $B_E$. 
 Since $t$ is any arbitrary bet on 
 $E$,  $\beta$ is consistent in $B_E.$
  By Theorem \ref{theorem:dutch},
  $\beta$ is extendable to a state $\tau$ of $B_E.$
  By    Lemma \ref{lemma:missing}(i), 
$\tau$ is extendable to
 a state $\rho$ of $A.$ A fortiori,  $\beta\subseteq \tau$
is extendable to  $\rho$.

  \index{product!topology}

  \index{Tychonoff cube} 

\medskip
 
{\it  $(\Leftarrow)$} 
Let  $\sigma$ be a state of $A$ extending $\beta$. The
 restriction  $\sigma\restrict
 B_E$ is  a state of $B_E$ extending $\beta.$ 
 By Theorem \ref{theorem:dutch},\,\,
 $\beta$ is consistent in $B_E.$
 We have to prove that $\beta$ is consistent  in $A$.
Arguing by way of contradiction,  let us assume 
Betty can devise a bet   $s\colon E \to\interval$ such that 
$$
\sum_{j=1}^m s(h_j)(\beta(h_j)-\eta(h_j)) < 0
\,\,\mbox{ for every }\eta\in \hom(A).
$$
By Lemma  \ref{lemma:missing}(ii),\,\,\,
$
\sum_{j=1}^m s(h_j)(\beta(h_j)-\epsilon(h_j)) < 0
\,\,\mbox{ for every }\epsilon \in \hom(B_E).
$
By Definition  \ref{definition:consistency-infinite-ba},
 $\beta$ is inconsistent  in $B_E$,  a contradiction.   \hfill{$\Box$}
\end{proof}

\begin{corollary}
\label{corollary:finitely-consistent}
{\it  Let $A$ be 
a boolean algebra. Then  $A$ has a state.
A function   $f\colon A \to \interval$  is 
  a state of $A$ iff $f$ is {\em finitely consistent},
in the sense that every finite restriction of $f$ is
consistent in $A$.}
\end{corollary}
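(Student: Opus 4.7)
The plan is to derive everything from Theorem~\ref{theorem:dutch-general} by applying it to suitably chosen finite restrictions of $f$. First I would settle the existence of a state by exhibiting a trivially consistent assignment. Consider $\beta_0\colon\{1\}\to\interval$ with $\beta_0(1)=1$. Every $\eta\in\hom(A)$ satisfies $\eta(1)=1$, so for any real stake $s(1)$ the quantity $s(1)(\beta_0(1)-\eta(1))=0$, which is never $<0$. Hence $\beta_0$ is consistent in the sense of Definition~\ref{definition:consistency-infinite-ba}, and Theorem~\ref{theorem:dutch-general} extends it to a state of $A$.

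For the forward direction of the equivalence, suppose $f$ is a state and let $E\subseteq A$ be any finite subset. Then the restriction $f\restrict E$ is extendable to a state of $A$, namely $f$ itself, so Theorem~\ref{theorem:dutch-general} immediately gives that $f\restrict E$ is consistent in $A$. For the reverse direction, assume every finite restriction of $f$ is consistent. Since $f$ already takes values in $\interval$, I only need to verify the two defining axioms of a state. To get $f(1)=1$, apply the consistency hypothesis to $E=\{1\}$: Theorem~\ref{theorem:dutch-general} produces a state $\sigma$ of $A$ extending $f\restrict\{1\}$, and $f(1)=\sigma(1)=1$ because every state sends the top element to $1$. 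To check additivity, fix $x,y\in A$ with $x\wedge y=0$ and set $E=\{x,y,x\vee y\}$; by hypothesis $f\restrict E$ is consistent, so Theorem~\ref{theorem:dutch-general} gives a state $\tau$ of $A$ agreeing with $f$ on $E$, and additivity of $\tau$ yields
$$f(x\vee y)=\tau(x\vee y)=\tau(x)+\tau(y)=f(x)+f(y).$$

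There is no serious obstacle to overcome here, since all the analytic work (Gordan's theorem in the finite case, and the Tychonoff-compactness argument of Lemma~\ref{lemma:missing} bridging finite subalgebras and $A$) has already been absorbed into Theorem~\ref{theorem:dutch-general}. The only conceptual point worth underlining is that each axiom of a state involves only finitely many elements of $A$ at a time, which is precisely what lets finite consistency suffice: one simply picks, for each axiom instance, a finite $E$ large enough to witness it, and reads the axiom off the extending state provided by Theorem~\ref{theorem:dutch-general}.
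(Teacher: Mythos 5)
Your proposal is correct and is exactly the derivation the paper intends: the corollary is stated there without proof as an immediate consequence of Theorem~\ref{theorem:dutch-general}, and you supply precisely the routine details (the trivially consistent assignment $1\mapsto 1$ for existence, and a finite witness set $E$ for each instance of the state axioms). The one tacit point is that $\hom(A)\neq\emptyset$ for a nontrivial $A$ --- needed to pass from ``the balance always vanishes'' to ``no stake makes it everywhere negative'' --- and this is secured by Lemma~\ref{lemma:missing}(ii) applied to the two-element subalgebra $\{0,1\}\subseteq A$.
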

\begin{remarks}
\label{remarks:ac}
{\rm
 \,\,\,(i) The proof of  
 Lemma  \ref{lemma:missing} 
rests on  
the compactness of  the Tychonoff cube $\interval^A$. 
In  ZF (Zermelo-Fraenkel)
 set-theory  this
is an equivalent reformulation of  the Axiom of Choice.
The dependence of de Finetti's consistency theorem
 \ref{theorem:dutch-general}  on this axiom is rarely
 made explicit  in the vast  literature on this theorem.

 \smallskip 
  (ii) From Lemmas
  \ref{lemma:inserimento}-\ref{lemma:missing}  
  and Theorem \ref{theorem:dutch-general}
  it follows  that a function 
    $\beta\colon E\to\interval$ is consistent in
a boolean  algebra $A\supseteq E$
iff it is consistent
in the algebra  $B_E$ generated by $E$ in $A$,
 iff it is consistent in any algebra
$A^*$ containing  $A$ as a subalgebra.
Hence the consistency of $\beta$ is largely indifferent 
to the chosen boolean algebra $A\supseteq E$.
However, the specification
of {\it some} ambient boolean algebra  $A$ for the
set $E$ of  ``events'' and their operations
considered in de Finetti's
consistency theorem is necessary to give
a meaning to   expressions such as ``in any possible
case'', or ``in any possible world'', which occur in the
definition of consistency.

 \smallskip 
(iii)  De Finetti's consistency  theorem
has two directions. He proved the
 ($\Rightarrow$)-direction   
  in  \cite[pp.309-312]{def-fundmath},
  and the $(\Leftarrow)$-direction
 in  \cite[p.313]{def-fundmath}.
Nevertheless,  purported 
``converses'' of de Finetti consistency
 ($\sim$ Dutch book)  theorem
exist  in the literature. 
}
  \end{remarks}
  %
%

\index{consistency!does not depend on the ambient algebra}
\index{Tychonoff cube} 
\index{Axiom of Choice}
\index{ZF set theory}
\index{compactness of the Tychonoff cube = Axiom of Choice}
\index{finitely consistent}
\index{state!=finitely consistent function}
%
\subsection{A self-contained proof of Gordan's Theorem}
\label{section:gordan}

\index{Gordan's theorem}
\index{theorem!Gordan}

In this section we give a self-contained proof
of Gordan's theorem, a basic ingredient of the proof
of de Finetti's consistency theorem.

%
%
%
%

\begin{theorem}
{\rm  (Gordan's theorem)}
\label{theorem:gordan}
Let $M$ be a real matrix with $n$ rows and $m$ columns. 
Then precisely one of the following conditions holds:
\begin{itemize}
\item[(i)]   There is a  (column) vector $s \in \mathbb R^m$ such that 
every  coordinate of the   vector $ M s \in \mathbb R^n$  is $< 0$.

\smallskip
\item[(ii)]     $\/$ There is a nonzero vector
$u \in \mathbb R^n$ such that every coordinate of
$u$ is   $\geq 0$ and $u^T M=0 = \mbox{the origin in}\,\,\,
\mathbb R^m.$\,\,\,(As above,
$u^T$ is the transpose of $u$.)
\end{itemize}
\end{theorem}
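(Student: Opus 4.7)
The plan has two parts: show that (i) and (ii) are mutually exclusive, and show that at least one of them holds. The first part is essentially immediate. If both conditions held, then on one hand $u^T(Ms) = (u^T M)s = 0$ by (ii); on the other hand, since $u \geq 0$ has at least one positive coordinate and every coordinate of $Ms$ is strictly negative, the scalar product $u^T(Ms)$ is strictly negative, a contradiction.

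For the substantive direction, I would assume that (i) fails and construct the required $u$. The key geometric object is the convex cone
$$
C = \{Ms + v \mid s \in \mathbb{R}^m,\ v \in \mathbb{R}^n,\ v \geq 0\},
$$
the Minkowski sum of the column span of $M$ and the non-negative orthant of $\mathbb{R}^n$. The failure of (i) entails that $-\mathbf{1} := (-1,\dots,-1) \notin C$: if $-\mathbf{1} = Ms + v$ with $v \geq 0$, then $Ms \leq -\mathbf{1}$, whence $Ms < 0$, reinstating (i).

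Granting for the moment that $C$ is closed, the closest point $p \in C$ to $-\mathbf{1}$ exists. Put $u := p + \mathbf{1}$, which is nonzero because $-\mathbf{1} \notin C$. Applying the minimality of $\|\,p -(-\mathbf{1})\,\|$ to the path $c_t = (1-t)p + tc \in C$ for $c \in C$ and $t \in [0,1]$, expanding $\|c_t - (-\mathbf{1})\|^2$ and letting $t \to 0^+$, one obtains the variational inequality
$$
u^T(c - p) \geq 0 \quad \text{for every } c \in C.
$$
Since $C$ is a cone, $0 \in C$ and $2p \in C$ together force $u^T p = 0$, so $u^T c \geq 0$ for every $c \in C$. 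Taking $c = \pm Ms \in C$ (with $s$ arbitrary) yields $u^T M = 0$; taking $c = e_i \in C$ yields $u_i \geq 0$. Together with $u \neq 0$, this is exactly condition (ii).

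The main obstacle is the closedness of $C$; this is the only non-elementary step. I would handle it by a short self-contained lemma asserting that any finitely generated convex cone $\{\sum_j t_j w_j \mid t_j \geq 0\}$ in $\mathbb{R}^n$ is closed. The standard argument is a Carath\'eodory-type reduction: any point of the cone can be written using a linearly independent subset of the generators, so the cone is a finite union of ``simplicial'' cones, each the image of some $\mathbb{R}^\ell_{\geq 0}$ under an injective linear map and hence closed. In our setting the generators of $C$ are the columns of $M$, their negatives, and the standard basis vectors $e_1,\dots,e_n$, so the lemma applies and the proof is complete.
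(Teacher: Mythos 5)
Your proof is correct, but it takes a genuinely different route from the paper's. The paper separates the \emph{linear subspace} $R=\mathrm{range}(M)$ from the open negative octant $O$: since $R$ and $O$ are disjoint but the origin is a boundary point of $\cl(R+O)$, strict separation is unavailable, and the paper must invoke a supporting-hyperplane theorem at the origin, itself proved by a limiting argument with projections onto the set from a sequence of exterior points (Facts \ref{fact:closure}--\ref{fact:supporting} and Lemma \ref{lemma:separation}). You instead form the closed convex \emph{cone} $C=\mathrm{range}(M)+\mathbb R^n_{\geq 0}$ and observe that failure of (i) places the single point $-\mathbf 1$ strictly outside $C$; a single projection of $-\mathbf 1$ onto $C$ then yields the separating vector $u$ directly, and the standard cone tricks ($0,2p\in C$ give $u^Tp=0$; $\pm Ms\in C$ give $u^TM=0$; $e_i\in C$ gives $u_i\geq 0$) finish the argument with no limit process. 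What each approach buys: yours avoids the boundary-point complication and the supporting-hyperplane machinery entirely, at the cost of having to prove that a finitely generated cone is closed (your Carath\'eodory-type decomposition into simplicial cones is the right way to do this, and is genuinely needed --- a Minkowski sum of closed sets need not be closed in general, so this step cannot be waved away); the paper pays instead with the sequence of projection/limit lemmas but never needs closedness of the cone, since it takes the closure explicitly. Both arguments are complete and elementary; yours is arguably shorter once the closedness lemma is in place, and that lemma is reusable elsewhere in convex geometry, whereas the paper's supporting-hyperplane theorem is likewise of independent interest.
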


\begin{proof}
{\rm Conditions  (i) and (ii) in the statement of
Gordan's theorem 
\ref{theorem:gordan}
 are  incompatible.
For, if both are assumed to hold we have
the contradiction  
\begin{equation}
\label{equation:aut-aut}
0\,\,\,\,\,\,\, \not=\,\,
 \underbrace{{\,\,\,\,\,\,}u^T{\,\,\,\,\,\,}}_{\geq 0\dots\geq 0,\,\,\, u\not=0} 
\,\,\,\, 
   \underbrace{ (Ms)}_{<0\dots<0}\,\,\,
=\,\,\, (u^T M)\,  s\,\,\,=\,\,\,
0.
\end{equation}
There remains to be proved that if 
  (i) fails then (ii) holds.
  Failure of (i) means that
the range $R$  of the linear operator $M\colon \mathbb R^m\to \mathbb R^n$ is disjoint from the  south-west  open octant 
$O$ given by
$$
O= \{v\in \mathbb R^n \mid \mbox{ each coordinate of }
v \mbox{ is } <0 \}.
$$
$R$ is a linear subspace
 of $\mathbb R^n$ containing the origin.  
 The hyperplane separation 
lemma  \ref{lemma:separation}
\footnote{A self-contained proof is given  below.} 
then  yields a nonzero
column  vector $u \in \mathbb R^n$, along  with 
a   hyperplane $H$  \,\,such that 
$$
R\subseteq H= \{x\in \mathbb R^n\mid  u^T  x = 0 \}
\,\,\,\mbox{ and }\,\,\,\,
O\subseteq \{x	\in \mathbb R^n\mid   u^T  x < 0 \}.
$$
The latter  inclusion implies that all 
coordinates of $u$ are $\geq 0.$  
It is now easy to see that $u^TM =0$.
For otherwise,  (absurdum hypothesis),
 there is a vector  $q \in \mathbb R^m$ with
$(u^T M) q\not= 0.$ 
As a member of
$R$,  the vector  $Mq$  is orthogonal to
$u^T,$  whence $u^T(Mq)=0,$ a contradiction.  
We have thus proved  that
if  (i) fails then (ii) holds.}  
\hfill$\Box$
\end{proof}

\index{south-west open octant} 
\index{hyperplane separation}

%
 
 \index{closure of a convex set}
 
 To complete the proof of Gordan's theorem, as well
as of de Finetti's  theorems \ref{theorem:dutch} and
 \ref{theorem:dutch-general},    in 
this  section a routine proof is given of the hyperplane separation lemma  \ref{lemma:separation}. 
\footnote{Some proofs of de Finetti's consistency theorem in the literature  cite a ``hyperplane separation theorem" without 
indicating the specific
  result  needed  for the proof.}

\begin{fact} 
\label{fact:closure}
The closure  $\cl X$ of a convex set 
$X\subseteq \mathbb R^n$ is convex.
\end{fact}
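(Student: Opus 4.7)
The plan is to establish convexity directly from the definition by a sequential approximation argument, exploiting the fact that the convex combination map $(a,b)\mapsto \lambda a + (1-\lambda)b$ is continuous in both arguments (indeed, jointly continuous) on $\mathbb R^n$.

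First I would fix arbitrary points $a, b \in \cl X$ and an arbitrary $\lambda \in [0,1]$, with the goal of showing that the convex combination $z = \lambda a + (1-\lambda) b$ lies in $\cl X$. By definition of closure in $\mathbb R^n$, I can select sequences $(a_k)_{k \geq 1}$ and $(b_k)_{k \geq 1}$ of points of $X$ with $a_k \to a$ and $b_k \to b$ as $k \to \infty$.

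Next, I would invoke the convexity of $X$ to conclude that for every $k$ the point $z_k = \lambda a_k + (1-\lambda) b_k$ belongs to $X$. Then the continuity of the linear operations of $\mathbb R^n$ — specifically, the fact that $\lambda a_k + (1-\lambda) b_k \to \lambda a + (1-\lambda) b$ whenever $a_k \to a$ and $b_k \to b$, which follows from the triangle inequality applied to $\|z_k - z\| \leq \lambda \|a_k - a\| + (1-\lambda)\|b_k - b\|$ — gives $z_k \to z$. Since $z$ is the limit of a sequence of points of $X$, we have $z \in \cl X$, and since $a, b, \lambda$ were arbitrary, $\cl X$ is convex.

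There is no serious obstacle here: the argument is entirely routine and uses only the definition of closure and the elementary continuity of affine combinations in $\mathbb R^n$. The only point worth stating explicitly, for the benefit of undergraduate readers targeted by this paper, is the triangle-inequality estimate above, which makes the continuity of the convex combination map transparent without invoking any auxiliary topological machinery.
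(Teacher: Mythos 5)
Your proof is correct and follows essentially the same route as the paper's: approximate the two endpoints by sequences in $X$, use convexity of $X$ to place each convex combination of the approximants in $X$, and pass to the limit. The only differences are cosmetic — you make the continuity step explicit via the triangle inequality, while the paper separately notes the trivial case $X=\emptyset$ (which your argument handles vacuously anyway).
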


\begin{proof} If $X=\emptyset$ we are done.
If $X\not=\emptyset$, for any  $x,y\in \cl X$
we must prove that the segment $[x,y]=\{(1-\lambda)x+\lambda y
\mid \lambda \in \interval \}$ lies in $ \cl X$. 
By definition of closure,   $X$ contains converging 
sequences  $x_n\to x$ and $y_n\to y$. 
Since $X$ is convex,
the  interval
$[x_n,y_n]$ lies in $X$. The sequence of midpoints
$m_n$   of  $[x_n,y_n]$ 
converges
to the midpoint $m$ of  $[x,y]$. As a limit of 
a convergent sequence
of points of $X$,  $m$ lies in $\cl X$.  More generally,
for each $\lambda\in \interval$ the sequence
$(1-\lambda)x_n+\lambda y_n$ converges to 
$(1-\lambda)x+\lambda y$. So $(1-\lambda)x+\lambda y$
lies in $\cl X$.  As $\lambda$ ranges over
$\interval$, all points of  $[x,y]$ are obtained. They all
lie in $\cl X$. 
\hfill{$\Box$}
\end{proof}

 \index{convex!set}
 \index{$|v|$ the euclidean norm of vector $v$}
\index{$|w|$ the length  of vector $w$}

\medskip
Let us recall that the scalar product
of two (always column) vectors $u,v\in \mathbb R^n$
is given by  matrix multiplication $u^Tv$,
with $u^T$ the transpose of $u$.

For $w\in \mathbb R^n$ we let $|w|$
 denote its euclidean norm,  (or ``length'')  $(w^Tw)^{1/2}$.

\index{scalar product}
\index{matrix multiplication}
\index{transpose}
\index{shortest vector in a closed set}

\begin{fact}
\label{fact:molteplice}  
Let $X\subseteq \mathbb R^n$ be
a nonempty closed convex set.

\smallskip
(i)   $X$ has a unique {\em shortest}  vector, i.e., a
vector $x_*$ with   $|x_*|= \inf\{|x|\,\mid\, x\in X\}$.

\medskip
(ii) Let\,\,\,  $t \in \mathbb R^n.$
Then there is a unique point $u$ in  $X$ 
 such that $|u-t|=\inf\{|x-t|\,\,\mid \,\,x\in X\}$.

\medskip
(iii) Assume $0\notin X.$
Let $x_*$ be the shortest vector in $X$  given by (i).  
Then for all $y\in X$,\,\,
$x_*^Ty> |x_*|^2/2.$

\medskip
(iv)   
Let $x_*$ be the shortest vector in $X$.
Assume  $0\notin X.$ Then the hyperplane  
$$
H=\{z\in
 \mathbb R^n\mid x_*^T z =|x_*|^2/2\}
 $$
{\em strongly separates} the origin from
 $X$.\footnote{i.e.,
  the distance of the origin from $H$ is
  $>0$,    the distance from $H$
  of  every  $y\in X$ is $>0$, and 
   $X$ and the origin
 are  contained 
 in opposite closed half-spaces with boundary  $H$.} 
\,\,\, More generally, for any 
   nonempty closed convex set   $Y\subseteq \mathbb R^n$,  
   point $e\in \mathbb R^n\setminus Y$,
 letting 
$x_*$ be  the unique point of $Y$ closest to $e$, we have 
$
 (x_*-e)^T (x-e) > |x_*-e|^2/2 \mbox{ for each }x\in Y.
$
In other words,
 the hyperplane 
 $$
 K=\{z\in
 \mathbb R^n\mid  (x_*-e)^T (z -e) = |x_*-e|^2/2\}
 $$
  strongly separates $e$ from $K$.
\end{fact}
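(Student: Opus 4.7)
My plan is to prove the four claims in sequence, since each supports the next: (i) produces a unique nearest point to the origin, (ii) follows by translating, (iii) is the first-order (variational) inequality witnessing the minimality of $x_*$, and (iv) is the geometric repackaging of (iii) as a strict separation. The only substantive ingredient is (i), and within (i) only the uniqueness half is non-trivial; once (i) is in hand the remaining claims are bookkeeping.

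For (i) I would split into existence and uniqueness. For existence, set $d = \inf\{|x| \mid x \in X\}$ and pick $x_k \in X$ with $|x_k| \to d$; the sequence is bounded, so Bolzano--Weierstrass supplies a convergent subsequence, whose limit $x_*$ lies in $X$ by closedness and satisfies $|x_*| = d$ by continuity of the norm. For uniqueness, if $y_* \in X$ also achieves the minimum, then convexity places $m = (x_*+y_*)/2 \in X$, so $|m| \ge d$, and the parallelogram identity
\begin{equation*}
|x_* - y_*|^2 \,=\, 2|x_*|^2 + 2|y_*|^2 - |x_* + y_*|^2 \,=\, 4d^2 - 4|m|^2 \,\le\, 0
\end{equation*}
forces $x_* = y_*$. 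Claim (ii) is immediate: apply (i) to the translated set $X - t$, which is still nonempty, closed, and convex; its unique shortest vector has the form $u - t$ for a unique $u \in X$, and this $u$ is the claimed closest point to $t$.

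For (iii), fix $y \in X$. For every $\lambda \in (0,1]$, convexity gives $x_* + \lambda(y - x_*) \in X$, so minimality of $x_*$ yields
\begin{equation*}
|x_*|^2 \,\le\, |x_* + \lambda(y - x_*)|^2 \,=\, |x_*|^2 + 2\lambda\, x_*^T(y - x_*) + \lambda^2 |y - x_*|^2.
\end{equation*}
Dividing by $\lambda$ and sending $\lambda \to 0^+$ gives $x_*^T(y - x_*) \ge 0$, i.e.\ $x_*^T y \ge |x_*|^2$. Since $0 \notin X$ forces $|x_*| > 0$, we obtain the stated $x_*^T y \ge |x_*|^2 > |x_*|^2/2$. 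Claim (iv) then reads off: $x_*^T \cdot 0 = 0 < |x_*|^2/2 < |x_*|^2 \le x_*^T y$ for every $y \in X$, so the hyperplane $H$ lies strictly between the origin and $X$, at distance $|x_*|/2 > 0$ from the origin and at distance at least $|x_*|/2$ from every point of $X$. For the general statement with $e \notin Y$, I would apply what has just been proved to the set $Y - e$ (still nonempty, closed, convex, and avoiding $0$), whose unique shortest vector is $x_* - e$ for the unique $x_* \in Y$ closest to $e$ given by (ii); then (iii) reads exactly $(x_*-e)^T(x-e) > |x_*-e|^2/2$ for every $x \in Y$. The main obstacle is the uniqueness part of (i): it is the only step where strict convexity of $|\cdot|^2$ is actually used, and the parallelogram identity is doing the essential work; everything thereafter is translation or a one-line variational computation.
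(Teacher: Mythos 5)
Your proof is correct and follows essentially the same route as the paper: existence via a convergent subsequence, uniqueness via the midpoint of two hypothetical minimizers (your parallelogram identity is the precise form of the paper's isosceles-triangle remark), (ii) by translating $X$ by $t$, and (iii)--(iv) by comparing $|x_*|$ with points of the segment $[x_*,y]\subseteq X$. The only difference is cosmetic: in (iii) you divide by $\lambda$ and let $\lambda\to 0^+$ directly, obtaining the stronger inequality $x_*^Ty\ge|x_*|^2$, whereas the paper argues by contradiction from the assumption $x_*^Ty\le|x_*|^2/2$; both rest on the same minimality computation.
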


\index{strong separation}

\begin{proof}
(i)  If the origin $0$ belongs to $X$ we have nothing to prove.
Otherwise, let   $\xi$  be  shorthand  
for $ \inf\{|x|\mid x\in X\}$.
Since $X$ is nonempty,  it is no loss of generality
 to assume that
$X$ is bounded. By definition of infimum
  there is a sequence  $x_n\in X$ with
$|x_n|-\xi <1/n.$ There is a convergent subsequence  $x_{n_i}$.
The  point 
$x_*=\lim x_{n_i}$  belongs to $X$, because $X$ is closed.
Since $x\mapsto |x|$ is a continuous real-valued function,
then   
 $\xi =\lim |x_{n_i}| =|\lim x_{n_i}|=|x_*|.$ To prove
the uniqueness of $x_*$ suppose (absurdum hypothesis)
 $y\in X$ is different from 
$x_*$ and  has the same length
as $x_*$. The triangle with vertices $0,x_*,y$ is isosceles.
The midpoint of  the interval $ [x_*,y]$  
 has a  distance $< |x_*|$ from 0, which is
impossible.  

\medskip
(ii)  The translated set $X-t=\{x-t\mid x\in X\}$
is also nonempty closed and convex. 
Distances  $|a-b|$  are
preserved under translation.  Now apply (i).

\medskip
(iii)
Arguing by way of  contradiction, let us assume
that there exists
 $y \in X$ such that 
\begin{equation}
\label{equation:serviva}
x_*^Ty \leq  |x_*|^2/2.
\end{equation}
 The closed interval
$[x_*,y]=\{(1-\lambda)x_*+\lambda y \mid \lambda\in [0,1]\}$  is
contained in $X$. 
For all  $0\leq \lambda\leq 1$ we then have:
\begin{eqnarray*}
|x_*|^2&\leq& |(1-\lambda)x_*+\lambda y|^2,
\,\,\,\mbox{ since $x^*$ is the shortest vector in $X$}\\
{}&\,\,=\,\,& (1-\lambda)^2|x_*|^2 +
\lambda^2|y|^2 +2(1-\lambda)\cdot \lambda\, x_*^Ty\\
{}&\,\,=\,\,&  |x_*|^2 +\lambda^2|x_*|^2
-2\lambda |x_*|^2 
+
\lambda^2|y|^2
+
(1-\lambda)\lambda|x_*|^2, \mbox{ by (\ref{equation:serviva})}.
\end{eqnarray*}
Therefore, 
\begin{eqnarray*}
0&\,\,\,\leq\,\,\, &
 \lambda^2|x_*|^2-2\lambda |x_*|^2 +\lambda^2|y|^2 +
 \lambda |x_*|^2-\lambda^2|x_*|^2\\ 
& \,\,\,=\,\,\, & -\lambda |x_*|^2+\lambda^2|y|^2.
\end{eqnarray*}
We then have  
 $|x_*|^2\leq \lambda |y|^2$ for each $\lambda >0.$
  The only 
possibility is  $|x_*|^2=0$ whence 
$x_*$ coincides with the origin of $\mathbb R^n$, a contradiction.

\medskip
(iv) Immediate from (iii).
\hfill{$\Box$}
\end{proof}

\index{strong hyperplane separation} 
\index{theorem!supporting hyperplane}
\index{supporting hyperplane}
\index{strong separation}

\begin{fact}
{\rm (Supporting hyperplane theorem)}
\label{fact:supporting}
%
%
 Let $X$ be a   closed
 convex subset of
$\mathbb R^n$ having a boundary
point \,\,$b$.
 Then there is a hyperplane $H$ containing $b$,
such that $X$ is contained in  one of the two closed
half-spaces bounded  by $H$.
$H$  is known as a {\em supporting hyperplane} for $X$
at $b$.
\end{fact}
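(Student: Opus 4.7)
The plan is to produce the supporting hyperplane as a limit of the strongly separating hyperplanes supplied by Fact \ref{fact:molteplice}(iv). Since $b$ is a boundary point of the closed set $X$, certainly $b\in X$, while every open ball around $b$ meets the complement $\mathbb{R}^n\setminus X$. Hence one can pick a sequence $e_k\to b$ with each $e_k\notin X$.

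For each $k$, Fact \ref{fact:molteplice}(ii) gives the unique closest point $x_k\in X$ to $e_k$, and Fact \ref{fact:molteplice}(iv) (with $Y=X$ and $e=e_k$) yields $(x_k-e_k)^T(x-e_k)>|x_k-e_k|^2/2$ for every $x\in X$. Since $e_k\notin X$ we have $|x_k-e_k|>0$, so dividing through by $|x_k-e_k|$ gives $u_k^T(x-e_k)>|x_k-e_k|/2$ for every $x\in X$, where $u_k=(x_k-e_k)/|x_k-e_k|$ is a unit vector.

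Because $b\in X$ is itself a candidate for the closest point in $X$ to $e_k$, one has $|x_k-e_k|\leq |b-e_k|\to 0$, and hence also $x_k\to b$. By Bolzano--Weierstrass, the sequence $(u_k)$ on the unit sphere of $\mathbb{R}^n$ has a convergent subsequence; passing to it, one may assume $u_k\to u^*$ with $|u^*|=1$. Letting $k\to\infty$ in the inequality above (using $e_k\to b$ and $|x_k-e_k|/2\to 0$) yields $u^{*T}(x-b)\geq 0$ for every $x\in X$. Thus the hyperplane $H=\{z\in\mathbb{R}^n\mid u^{*T}(z-b)=0\}$ contains $b$, and $X$ lies in the closed half-space $\{z\mid u^{*T}(z-b)\geq 0\}$, which is the desired conclusion.

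The one point that requires care is that the raw separating vectors $x_k-e_k$ themselves collapse to zero as $e_k\to b$; normalizing them to unit length before taking the limit is essential. Compactness of the unit sphere then converts the normalized sequence into a genuine nonzero limiting normal $u^*$, and the strict inequality relaxes harmlessly to $\geq 0$ in the limit, which is exactly what distinguishes a supporting hyperplane from a strongly separating one.
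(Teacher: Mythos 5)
Your proposal is correct and follows essentially the same route as the paper: approximate $b$ by exterior points, take the normalized directions from each exterior point to its nearest point in $X$, extract a convergent subsequence of these unit vectors, and pass to the limit in the strong-separation inequality of Fact \ref{fact:molteplice}(iv). The only (harmless) variation is that you obtain $|x_k-e_k|\to 0$ directly from $|x_k-e_k|\leq|b-e_k|$, whereas the paper deduces it from the separation inequality evaluated at $x=b$; both arguments are equally valid.
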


\begin{proof}    By definition of boundary, $b$ lies in
the closed set  $X$, and
there is a sequence of points $b_n\notin X$ with
$b_n\to b.$  For each $b_n$, let $x_n\in X$ be the closest
point to $b_n$ as given by Fact 
\ref{fact:molteplice}(ii). Let  $u_n$ be the unit vector
$({x_n-b_n})/{|x_n-b_n|}.$ By Fact
\ref{fact:molteplice}(iv),
\begin{equation}
\label{equation:crossedcircle}
\frac{|x_n-b_n|}{2}< u_n^T(x-b_n) \,\,\,  \mbox{ for each $x\in X$,
in particular for $x=b$}.
\end{equation}
The bounded sequence  $u_n$ 
has a convergent subsequence, say
converging to the unit vector $u.$  Then by 
(\ref{equation:crossedcircle}), 
$$
\lim \frac{|x_n-b_n|}{2}\leq\lim u^T_n(b-b_n)=u^T\lim(b-b_n)=0, 
$$
whence
\begin{equation}
\label{equation:lorena}
\lim \frac{|x_n-b_n|}{2}=0.
\end{equation}
 From (\ref{equation:crossedcircle})-(\ref{equation:lorena}),
  for each  $x\in X$ we have
$$
0 \leq \lim\,  u_n^T(x-b_n) =
u^T(x-b)   \mbox{  whence } u^Tb\leq u^T x.
$$
Thus the hyperplane $H=\{x\in \mathbb R^n\mid u^Tx=u^Tb\}$
supports $X$ at $b$. 
\hfill{$\Box$}
\end{proof}

\index{octant}
\index{south-west open octant}
\index{hyperplane separation theorem}
\index{theorem!hyperplane separation}

We are now ready   to prove the specific
hyperplane separation lemma needed for the
proof of Gordan's theorem and, ultimately, for the proof
of de Finetti's consistency theorem.

\begin{lemma}
{\rm (Hyperplane Separation Theorem)}
\label{lemma:separation} 
 Let $O\subseteq \mathbb R^n$ be the  set  of
all vectors whose coordinates are $<0.$
We say that $O$ is 
the {\em south-west open octant}.
Let $R$ be a  linear  subspace
of $\mathbb R^n$
disjoint from $O$.
%
 Then for some vector
$u\in \mathbb R^n$  the hyperplane
$H=\{x\in \mathbb R^n\mid u^T x=0\}$
has the following (separation) property for 
 $O$ and $R$:
 $$
 \mbox{$R\subseteq H$, \,\,\,(i.e., $u^T r=0$ for all $r\in R$).}
 $$
 and
$$
\mbox{$u^T y < 0$ for all $y \in O$, whence 
$H\cap O \mbox{\,\,is empty}.$}
$$ 
\end{lemma}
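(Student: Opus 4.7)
The plan is to produce the desired $u$ as the normal to a \emph{supporting} hyperplane at the origin of a suitable closed convex set. Let $S = R + O = \{r + y \mid r \in R,\ y \in O\}$. Since $O$ is open, each translate $r + O$ is open, so $S = \bigcup_{r \in R}(r + O)$ is open; $S$ is also convex as a sum of convex sets, and by Fact~\ref{fact:closure} the set $\cl S$ is closed and convex.

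The next step is to locate the origin relative to $\cl S$. On one hand, $0 \in \cl S$, because the points $y_n = -(1/n)(1,\dots,1) \in O$ lie in $S$ and tend to $0$. On the other hand, $0 \notin S$: if $0 = r + y$ with $r \in R$ and $y \in O$, then $y = -r$ lies in the linear subspace $R$, contradicting the disjointness of $R$ from $O$.

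The main obstacle is to show that $0$ is a \emph{boundary} point of $\cl S$, equivalently $0 \notin \interior(\cl S)$. I plan to argue by contradiction. Suppose some ball $B_\varepsilon(0) \subseteq \cl S$; then the point $p = (\varepsilon/(2\sqrt{n}))(1,\dots,1)$, all of whose coordinates are strictly positive, lies in $\cl S$, so there exist sequences $r_k \in R$ and $y_k \in O$ with $r_k + y_k \to p$. Since $(y_k)_i < 0$ for every $i$, the identity $(r_k)_i = (r_k + y_k)_i - (y_k)_i$ shows that each coordinate of $r_k$ exceeds the corresponding coordinate of $r_k + y_k$, and hence exceeds $p_i/2 > 0$ for all sufficiently large $k$. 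Then $-r_k$, which belongs to $R$ by linearity, has all strictly negative coordinates, so $-r_k \in R \cap O$, contradicting the hypothesis.

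With $0$ thus established as a boundary point of $\cl S$, Fact~\ref{fact:supporting} supplies a supporting hyperplane $H = \{x \in \mathbb{R}^n : u^T x = 0\}$ for $\cl S$ at $0$, for some nonzero $u \in \mathbb{R}^n$; after replacing $u$ by $-u$ if necessary, I may assume $\cl S \subseteq \{x : u^T x \le 0\}$. To deduce $R \subseteq H$, fix $y_0 \in O$; for every $r \in R$ and every $\lambda \in \mathbb{R}$ we have $\lambda r + y_0 \in S$, hence $\lambda\, u^T r + u^T y_0 \le 0$, and the arbitrariness of $\lambda$ forces $u^T r = 0$. Finally, since $\cl O \subseteq \cl S$, we get $u^T y \le 0$ for every $y \in \cl O$; taking $y = -e_i$ gives $u_i \ge 0$ for each $i$, and because $u \ne 0$ some $u_{i_0} > 0$. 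For any $y \in O$ the scalar product $u^T y = \sum_i u_i y_i$ is then a sum of non-positive terms with the $i_0$-th one, $u_{i_0} y_{i_0}$, strictly negative; so $u^T y < 0$, as required.
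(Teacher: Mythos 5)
Your proof is correct and follows essentially the same route as the paper: form $R+O$, pass to its closure, recognize the origin as a boundary point, and invoke the supporting-hyperplane fact there. You in fact fill in two details the paper leaves implicit --- the explicit verification that $0\notin\interior(\cl(R+O))$, and the derivation of the strict inequality $u^Ty<0$ on $O$ from $u_i\ge 0$ for all $i$ and $u\neq 0$ --- which only strengthens the argument.
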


\begin{proof} The set $R+O=\{r+o\in \mathbb R^n\mid r\in R,
\,\,o\in O \}$ is convex and does not contain the origin. 
Its  closure  $\cl(R+O)$ is convex, by Fact \ref{fact:closure}.
Since $R$ and $O$ are disjoint,  the origin is
a boundary point of $\cl(R+O)$. 

\smallskip 
Fact \ref{fact:supporting} provides
a  supporting hyperplane $H$ for  $\cl(R+O)$ at 0. 

\smallskip
 \noindent
 Since $R$ is linear and  $0\in R\cap H$ then  
 $R$ is contained in $H,$
 \begin{equation}
 \label{equation:inclusion}
 R\subseteq H.
 \end{equation}
By construction, 
 $O$ is contained  in one of the two
  {\it open} half-spaces with boundary  $H$.
The other open half-space  contains a 
 vector
  $u$ orthogonal to $H$ such that 
 $u^T y<0$ for all $y\in O$.
 For any such  $u$, by (\ref{equation:inclusion}),
  we automatically have
 $u^T r = 0$ for all $r\in R.$ 
  \hfill{$\Box$}
\end{proof}

  \index{open half-space} 

\index{theorem!of the alternative}

\medskip
The proof of  Gordan's theorem is now complete,
and so is the proof of 
 de Finetti's consistency theorem \ref{theorem:dutch},
as well as of its generalization  \ref{theorem:dutch-general}
to all boolean algebras.

\begin{remarks} 
{\rm
The main effect of de Finetti's consistency theorem
is summarized  by de Finetti himself in his quote
at the beginning of this paper:
all  the results of probability theory are nothing 
more than consequences of his definition of consistency. 
In particular, the traditional ``{\it axiom}  of additivity for the
probability of incompatible events''
is shown by Theorem
\ref{theorem:dutch}
 to be a {\it consequence of de Finetti's definition}
 \ref{definition:consistency} of  consistency.
In a nutshell:
\begin{equation}
\label{equation:uno}
  \fbox
  {
  \parbox{0.83\linewidth}
{
\center
{
consistency + incompatibility $\Rightarrow$
 \,\,additivity axiom
}\\[0.3cm]
}
}
\end{equation}

}
\end{remarks}

 \bigskip
\section{De Finetti's Exchangeability Theorem}
\label{chapter:exchangeability}

This chapter provides a  self-contained  proof of 
de Finetti's  exchangeability theorem, a seminal result
which he first proved
  in his 1930 paper
 \cite{def-1930} and then  in \cite{def-poincare}. 
We only use the language of  boolean algebras, 
doing without notions such as 
probability  space,   
  random variable,  expectation, 
   conditional, moment,    Radon measure,
martingale, variously present in the  
 literature on this theorem.
In a final section, the
  original formulation of de Finetti's theorem
will  be easily  recovered from our proof. 
The   proof given here, although elementary,  may discourage the reader unfamiliar with long combinatorial calculations. 
Since this chapter is independent of the rest of this 
paper, it can be skipped on
a   first reading.

\subsection{Product  states and exchangeable states}
\label{section:product-exchangeable}

\index{de Finetti!exchangeability theorem}
\index{de Finetti!theorem, by antonomasia}
\index{theorem!de Finetti exchangeability}
   \index{$S(A)$, the state space of $A$}
   \index{state!space $S(A)$ of $A$}
 \index{free!generating set}
 \index{free!boolean algebra  $\mathsf F_\omega$}
\index{$\mathsf F_\omega$, the free countable boolean algebra}

\medskip
Let $A$ be a (finite or infinite) boolean algebra.
  By definition
 of  product topology,   the  set 
$S(A)$ of states of $A$,  
equipped with the restriction of the  product topology of  
  $\interval^A$ is  
  a convex compact subspace  of    $\mathbb R^A$. 
Generalizing the definition of
$\mathsf F_n$ given  in Section  \ref{section:recap},
 let   
 $$
 \mbox{$\mathsf F_\omega$ be the
 {\it free boolean algebra over
the free generating set}  $\{X_1,X_2,\dots\}.$
}
$$
Equivalently,  
 $\{X_1,X_2,\dots\}$ generates $\mathsf F_\omega$,  
and for every $m=1,2,\dots$ and $m$-tuple  
$(\beta_{1},\dots\beta_m) \in \{0,1\}^m$, 
\begin{equation}
\label{equation:t}
 X_{1}^{\beta_{1}}\wedge \dots, \wedge\,
X_{m}^{\beta_{m}}\not=0.\,\,\footnote{As in Section
\ref{section:recap},\,\,\,  $X_{i}^{\beta_i}=
X_{i}$ if $\beta_i=1$, and
 $X_{i}^{\beta_i}=\neg X_{i}$ 
if  $\beta_i=0$.}
\end{equation}

\index{conjunct}
\index{miniterm}

\noindent
Any element  
$t\in \mathsf F_\omega$ of the form 
(\ref{equation:t})
  is said to be  a {\it miniterm}
 of $\mathsf F_\omega.$ Each $X_{i}^{\beta_{i}}$
is  called  a {\it conjunct} of   $t$.
Since, as we have seen,  the set 
$\{X_{1},\dots,X_{m}\}$ freely generates 
 the  free  boolean algebra  
$\mathsf F_m\subseteq \mathsf F_\omega$, it follows that 
$t$ is  also a {\it miniterm } of $\mathsf F_m$.   We  let
$$
\mbox{
$\possf(t)$\,\,\,\,\, (resp., $\negsf(t)$)}
$$
 denote the number of
non-negated (resp., the number of negated)
conjuncts of $t$. Thus, e.g.,  $\possf(t)=\sum_{i=1}^m \beta_i.$
\index{$\possf(t)$, the number of non-negated conjuncts of 
the miniterm $t$}
\index{$\negsf(t)$, the number of negated conjuncts of 
the miniterm $t$}  
 
 \smallskip
Arbitrarily fix $p\in \interval$. Let the function $\mathsf f_p$
assign to every miniterm $t\in \mathsf F_\omega$
 the value $p^{\possf(t)}
(1-p)^{\negsf(t)}$,
\begin{equation}
\mathsf f_p(t)=p^{\possf(t)} (1-p)^{\negsf(t)}.
\end{equation}
In the particular case 
$p=1$, we have  $\negsf(t)=0$. We then set 
$\mathsf f_1(t)=1$.
Likewise,   we set 
$\mathsf f_0(t)=1$.  
Since
$\{X_1,X_2,\dots\}$ 
freely generates $\mathsf F_\omega$, the
 function $\mathsf f_p$ is well defined.
Upon writing 
 every element  $a\in \mathsf F_\omega$ as
a disjunction of miniterms of some free algebra $\mathsf F_m,$
it follows that (the actual choice of $m$ is immaterial, and)
{\it   the
function $\mathsf f_p$ is extendable to a unique state}
$\pi_p$ of $\mathsf F_\omega$,
\begin{equation}
\label{equation:product-state}
\pi_p(a)=\mbox{ unique extension of 
$\mathsf f_p$,\,\,\, \,\,\,\,   $(p\in \interval$)}.
\end{equation} 
For instance, 
$$
\pi_p((X_1\wedge X_2\wedge X_3)
\vee (X_1\wedge X_2 \wedge \neg X_3))=
p^3(1-p)^0+p^2(1-p)=p^2=
\pi_p(X_1\wedge X_2),
$$
in agreement with the identity
$(X_1\wedge X_2\wedge X_3)
\vee (X_1\wedge X_2 \wedge \neg X_3)
= X_1\wedge X_2$. 

\index{product!state}
  \index{state!product}

\medskip
\begin{definition}
\label{definition:product-state}
{\rm For every $p\in\interval$ we say that $\pi_p$ is 
a {\em product state}  of
 $\mathsf F_\omega$.  
The restriction $\pi_p\restrict \mathsf F_m$ of  $\pi_p$ to
  $\mathsf F_m$ is said to be a
  {\em product state}  of $\mathsf F_m.$
 
 \smallskip
An {\em exchangeable state} 
 of $\mathsf F_\omega$
is a state  $\sigma\colon \mathsf F_\omega\to \interval$ such that 
 for every miniterm $t$ the value 
$
\sigma(t)\mbox{ only depends on the pair of integers }
(\possf(t),\negsf(t)).
$
}
\end{definition}

\index{exchangeable state}
\index{state!exchangeable}
\index{product states are interchangeable}

\medskip 
The proof of the following proposition is immediate:

\begin{proposition}
\label{proposition:easy}

\smallskip
(i)  For any  $p\in \interval$  the   product
 state $\pi_p$ is an exchangeable 
 state of $\mathsf F_\omega.$ 
 
 \smallskip
 (ii) Every
 convex combination of product states
 of $\mathsf F_\omega$ in the vector space 
 $\mathbb R^{\mathsf F_\omega}\supseteq \interval^{\mathsf F_\omega}$
 is exchangeable.
  \end{proposition}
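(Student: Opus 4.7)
My plan is to prove both parts by directly unpacking the definitions; as the author signals, nothing deep is at play.

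For part (i), I would observe that by the defining formula \eqref{equation:product-state}, the value of $\pi_p$ on any miniterm $t$ is
$$\pi_p(t)=\mathsf f_p(t)=p^{\possf(t)}(1-p)^{\negsf(t)},$$
which manifestly depends only on the pair $(\possf(t),\negsf(t))$. This is exactly the condition in Definition \ref{definition:product-state} for $\pi_p$ to be an exchangeable state of $\mathsf F_\omega$. The edge cases $p=0$ and $p=1$ are handled by the special stipulations $\mathsf f_0(t)=\mathsf f_1(t)=1$ in the definition, and these, too, trivially depend only on $(\possf(t),\negsf(t))$.

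For part (ii), I would first note the standard (and routine) fact that the set $S(\mathsf F_\omega)$ of states is a convex subset of $\mathbb R^{\mathsf F_\omega}$: if $\sigma_1,\sigma_2$ are states and $0\le\lambda\le 1$, then $\sigma=\lambda\sigma_1+(1-\lambda)\sigma_2$ satisfies $\sigma(1)=1$ and, whenever $x\wedge y=0$, $\sigma(x\vee y)=\lambda\sigma_1(x\vee y)+(1-\lambda)\sigma_2(x\vee y)=\sigma(x)+\sigma(y)$; moreover $\sigma$ takes values in $\interval$. By iteration the same holds for any finite convex combination. Hence any convex combination of product states is indeed a state.

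Next, for such a convex combination $\sigma=\sum_{k=1}^N \lambda_k \pi_{p_k}$ (with $\lambda_k\ge 0$ and $\sum_k\lambda_k=1$) and any miniterm $t$, one computes
$$\sigma(t)=\sum_{k=1}^N\lambda_k\,\pi_{p_k}(t)=\sum_{k=1}^N\lambda_k\, p_k^{\possf(t)}(1-p_k)^{\negsf(t)},$$
which depends on $t$ only through the pair $(\possf(t),\negsf(t))$. By Definition \ref{definition:product-state}, $\sigma$ is therefore exchangeable. There is no genuine obstacle here; the only thing to be mildly careful about is the preservation of the state axioms under convex combinations, which I would dispatch in a single line as above.
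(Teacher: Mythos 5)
Your proof is correct and is precisely the ``immediate'' verification the paper has in mind (the paper in fact omits the proof entirely, declaring it immediate before the statement). Unpacking Definition \ref{definition:product-state} on miniterms for (i), and checking that a convex combination of states is a state whose miniterm values still depend only on $(\possf(t),\negsf(t))$ for (ii), is exactly the intended argument.
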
 
  
  \subsection{The Exchangeability Theorem}
  \label{section:exchangeability}

 \medskip 
 \noindent
 In his 1930 paper   \cite{def-1930} 
 de Finetti vastly extended  Proposition
 \ref{proposition:easy}(ii)
 with his characterization of exchangeable states.
In our  boolean algebraic language,
``de Finetti theorem'' by antonomasia is as follows:

\index{exchangeability theorem!boolean formulation}
\index{theorem!exchangeability, boolean formulation}

 \begin{theorem}
 \label{theorem:exchangeability}
 Let $\mathsf F_\omega$  be the free boolean
 algebra over the  free generating set $\{X_1,X_2,\dots\}$.
 Let    $\sigma$ be a state of $\mathsf F_\omega$.
 Then $\sigma$ is exchangeable iff it 
   lies in the closure of the
set of convex combinations  of product 
states of $\mathsf F_\omega$
in the vector space
$\mathbb R^{\mathsf F_\omega}$
 endowed with the restriction of the product topology.
 \end{theorem}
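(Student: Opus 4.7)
The plan is to handle the ``if'' direction by a closure argument and the ``only if'' direction by a direct approximation: I will construct explicit finite convex combinations $\tau_N$ of product states (extracted from the values of $\sigma$ on miniterms of $\mathsf F_N$) converging to $\sigma$ in the product topology, so that no weak-limit extraction of measures on $[0,1]$ is needed.

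For the ``if'' direction, I would first observe that for every pair of miniterms $t,t'$ with $(\possf(t),\negsf(t))=(\possf(t'),\negsf(t'))$ the set $\{\sigma\in\mathbb R^{\mathsf F_\omega}:\sigma(t)=\sigma(t')\}$ is closed in the product topology; hence the exchangeable states form a closed subset of $\mathbb R^{\mathsf F_\omega}$. Combined with Proposition \ref{proposition:easy}(ii), this yields the ``if'' direction at once.

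For the ``only if'' direction, fix an exchangeable $\sigma$ and set $c_{k,n}:=\sigma(t)$ for any miniterm $t$ with $\possf(t)=k$ and $\possf(t)+\negsf(t)=n$. The first step is to derive the combinatorial identity
$$c_{k,n}\;=\;\sum_{j=k}^{N-n+k}\binom{N-n}{j-k}\,c_{j,N}\qquad(N\geq n),$$
by expressing a miniterm $t\in\mathsf F_n$ as the pairwise disjoint supremum in $\mathsf F_N$ of its $2^{N-n}$ refinements through the generators $X_{n+1},\dots,X_N$, and applying additivity of $\sigma$. Setting $b_{j,N}:=\binom{N}{j}c_{j,N}$ (so $\sum_j b_{j,N}=\sigma(1)=1$ by exchangeability and additivity applied to the partition of $1$ into atoms of $\mathsf F_N$), the identity rewrites as
$$c_{k,n}\;=\;\sum_{j=0}^N \frac{(j)_k\,(N-j)_{n-k}}{(N)_n}\,b_{j,N},\qquad (x)_r:=x(x-1)\cdots(x-r+1).$$

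The heart of the argument is the elementary asymptotic
$$\frac{(j)_k\,(N-j)_{n-k}}{(N)_n}\;=\;(j/N)^k\,(1-j/N)^{n-k}+O_{n,k}(1/N),$$
uniformly in $j\in\{0,\dots,N\}$, which follows from expanding each falling factorial around its leading power. Substituting yields $c_{k,n}=\sum_{j} b_{j,N}\,\pi_{j/N}(t)+O_{n,k}(1/N)$ for every miniterm $t$ with $\possf(t)=k,\,\possf(t)+\negsf(t)=n$. Hence the finite convex combination of product states
$$\tau_N\;:=\;\sum_{j=0}^N b_{j,N}\,\pi_{j/N}$$
satisfies $|\tau_N(t)-\sigma(t)|=O_{n,k}(1/N)$ for every such $t$. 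Since every $a\in\mathsf F_\omega$ is a finite disjunction of miniterms of some $\mathsf F_n$, additivity gives $\tau_N(a)\to\sigma(a)$ for all $a\in\mathsf F_\omega$, which is precisely convergence in the product topology, placing $\sigma$ in the closure of convex combinations of product states. The main obstacle is the uniform asymptotic on the falling-factorial kernel: once that bookkeeping is in hand, the rest is boolean additivity plus the fact that every element of $\mathsf F_\omega$ lives in some $\mathsf F_n$.
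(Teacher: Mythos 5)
Your proof is correct, and while it follows the same overall strategy as the paper's --- represent the restriction of $\sigma$ to $\mathsf F_N$ by nonnegative weights indexed by the number of positive conjuncts, then show the resulting hypergeometric-type kernel converges to the binomial kernel $\pi_{j/N}$ --- both of its main steps are executed by a genuinely different and cleaner route. The paper obtains the weights $\lambda_{N,K}$ by asserting (without proof) that the states $\xi_{N,K}$ are exactly the extremal exchangeable states of $\mathsf F_N$ and that every exchangeable state of $\mathsf F_N$ is a convex combination of them; you instead derive the identity $c_{k,n}=\sum_j\binom{N-n}{j-k}c_{j,N}$ directly, by splitting a miniterm of $\mathsf F_n$ into its $2^{N-n}$ pairwise incompatible refinements in $\mathsf F_N$ and applying additivity plus exchangeability, which produces the same weights $b_{j,N}=\binom{N}{j}c_{j,N}=\lambda_{N,j}$ with no appeal to extremality. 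Second, your uniform estimate $\bigl|(j)_k(N-j)_{n-k}/(N)_n-(j/N)^k(1-j/N)^{n-k}\bigr|=O_{n,k}(1/N)$, obtained by telescoping products of numbers in $[-1,1]$, replaces the paper's bound via the quantities $c_i,d_j$, which degenerates near $K=k$ and $K=N-n+k$ and therefore forces a three-range splitting of the sum with an auxiliary parameter $\eta$ inside a proof by contradiction; your version is shorter and quantitatively sharper, giving an explicit rate and an explicit approximating sequence $\tau_N\to\sigma$ pointwise. Finally, you supply an actual argument for the $(\Leftarrow)$ direction --- exchangeability is a closed condition, being an intersection of sets cut out by continuous coordinate projections, so it is inherited by limits of the convex combinations of Proposition \ref{proposition:easy}(ii) --- where the paper only writes ``Easy now.''
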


 \begin{proof} 
 Self-contained proofs 
of deep results -- the target of this paper --  can be
long and  challenging.
As with  the consistency theorem,
 the reader's patient  study will be rewarded
with   knowledge of another far-reaching 
  de Finetti theorem.
  
\smallskip
$(\Rightarrow)$-direction.
Arbitrarily fix  $n=1,2,\dots$.
 The restriction
 $\sigma_n=\sigma\restrict \mathsf F_n$ is an
  {\it exchangeable state of $\mathsf F_n,$}
in the sense that   for every miniterm $t$ of $\mathsf F_n$, the value
 $\sigma_n(t)$ only depends on the pair of integers $(\possf(t),\negsf(t))$.
 For all integers $N> n$ and 
$K=0,\dots,N$ let  $\xi_{N,K}$ be the   
state of the free boolean algebra
$\mathsf F_N$ assigning the value
 $1/{N\choose K}$ to each miniterm 
 $u$ of $\mathsf F_N$ with  $\possf(u)=K$,
 and assigning 0 to the remaining $2^N-{N\choose K}$ 
 miniterms of $\mathsf F_N$.

%
%
\index{extremal}

We can easily verify  that  
 $\xi_{N,K}$  is  {\it extremal}
   in the convex set of
   exchangeable
 states of $\mathsf F_N.$
In other words,   $\xi_{N,K}$  cannot be expressed as
 a nontrivial convex combination of two distinct
 exchangeable states of $\mathsf F_N$.
 Furthermore,   $\mathsf F_N$ has
 no other extremal exchangeable states beyond 
 $\xi_{N,0},\dots,\xi_{N,N}.$

Since the restriction $\sigma_N=
 \sigma\restrict \mathsf F_N$  is an
 exchangeable state of $\mathsf F_N$  there are
  real numbers
 $$\lambda_{N,0},\dots,\lambda_{N,N}\geq 0
\,\,\, \mbox{ with }\,\, \sum_{l=0}^n \lambda_l =1$$
 such that  $\sigma_N$ agrees
 over $\mathsf F_N$ with
 \index{convex!combination}
 the convex combination $\sum_{K=0}^N
  \lambda_{N,K} \cdot \xi_{N,K}$
in the finite-dimensional  vector space
$\mathbb R^{\mathsf F_N}.$ 
Since the restriction function  
$$\psi\in S(\mathsf F_N)\mapsto \psi\restrict
\mathsf F_n\in S(\mathsf F_n)$$
 is linear,  the state  
$\sigma_n=\sigma_N\restrict \mathsf F_n
=\sigma\restrict \mathsf F_n$ agrees
over $\mathsf F_n$
with the convex combination
$\sum_{K=0}^N
  \lambda_{N,K} \cdot \xi_{N,K}\restrict \mathsf F_n.$ 
    In particular, 
\begin{equation}
\label{equation:erre}
\sigma(r)=
\sum_{K=0}^N
  \lambda_{N,K} \cdot \xi_{N,K}(r),\,\,\,
    \mbox{ for each miniterm } r \in \mathsf F_n 
    \mbox{ and }N>n. 
\end{equation}
For some
$n$-tuple
of bits $(\beta_1,\dots,\beta_n)\in \{0,1\}^n$, let 
the miniterm $t\in \mathsf F_n$ be defined by 
$$
t=X_1^{\beta_1}\wedge\cdots\wedge
X_n^{\beta_n}.
$$  
Suppose the miniterm $w=
X_1^{\beta'_1}\wedge\cdots\wedge
X_N^{\beta'_N}\in \mathsf F_N$ satisfies 
$t\geq w$. 
Since the set  $\{X_1,X_2,\dots,X_N\}$ freely generates
 $\mathsf F_N$ then $\beta_1=\beta'_1,\dots,\beta_n=\beta'_n$, whence
 in particular
 $$\possf(t)\leq \possf(w)\leq N-\negsf(t)=N-n+\possf(t).$$ 
  For each $K=\possf(t),\,\dots,N-n+\possf(t)$, we have
  $$
  \mbox{
 $v\leq t$ for precisely
 $ {N-n \choose K-\possf(t)}$  
   miniterms $v$ of $\mathsf F_N$ with $\possf(v)=K$.
   }
   $$ 
   As we already know,
   for any such 
   $v$,\,\,\, 
 $\xi_{N,K}(v)$ coincides with $1/{N\choose K}$. 
Since $t$ equals the disjunction 
of the  miniterms $u$ of $\mathsf F_N$ 
satisfying  $t\geq u$,
then 
$$
\xi_{N,K}(t)=
 \left\{
 \begin{array}{cl}
 \frac{{N-n \choose K-\possf(t)}}{{N\choose K}} 
& \,\,\,  \mbox{if}\,\, K = \possf(t),\dots,N-n+\possf(t) \\[0.3cm]
 0\,\,\,  &\,\,\,  \mbox{if}\,\,
    K =0, \dots,   \possf(t)-1, \,\,N-n+\possf(t)+1,  \dots,  N. 
  \end{array}
  \right.
 $$
Thus by (\ref{equation:erre}), 
 for any miniterm $t$ of $ \mathsf F_n$ 
and  $N>n$  we can write
\begin{equation}
\label{equation:sigma(t)}
 \sigma(t)=
  \sum_{K=\possf(t)}^{N-n+\possf(t)}
  \lambda_{N,K} \cdot 
  \frac {{N-n \choose K-\possf(t)}  }{{N\choose K}}\,.
\end{equation}

 \bigskip
 \noindent{\it Claim 1:}  
 For  every $n=1,2,\dots$
  and $\epsilon >0$  there is $N>n$
 such that for every miniterm $t\in \mathsf F_n$,\,\,  
 letting 
$$
\mbox{$k$ be shorthand for $\possf(t) $},
$$  
we have
$$
 \left| \sigma(t)-
 \sum_{K=0}^{N}\lambda_{N,K}\,\pi_{K/N}(t)\right|
 =
  \left| \sigma(t)-
  \sum_{K=0}^{N}\lambda_{N,K}(K/N)^k(1-K/N)^{n-k}\right|
   <\epsilon.
$$

By  way of contradiction, suppose
there are   $n$ and 
 $\epsilon>0$ such that for every  $N>n$ there is a miniterm 
$u \in \mathsf F_n$ satisfying
$
 \left| \sigma(u)-
 \sum_{K=0}^N \lambda_{N,K}\pi_{K/N}(u)\right|\geq\epsilon.
 $
 Since  $\mathsf F_n $ is finite there is 
 $\epsilon>0$ and $n$,   together with a miniterm $t\in \mathsf F_n$
 such that for infinitely many  $N$,\,\,\,  
$ \left| \sigma(t)-\sum_{K=0}^{N}\lambda_{N,K}\pi_{K/N}(t)\right|
\geq\epsilon.$
 %
%

 \smallskip
\noindent
{\it Case 1:} $\possf(t) \notin\{0,n\}$. 
 By (\ref{equation:sigma(t)}) we can write
\begin{eqnarray*}
 \sigma(t) &=&
  \sum_{K=k}^{N-n+k}
  \lambda_{N,K}  \cdot 
  \frac {{N-n \choose K-k}  }{{N\choose K}}=
  \sum_{K=k}^N
  \lambda_{N,K} \cdot  
  \frac{(N-n)!}{(K-k)!(N-n-(K-k))!} \cdot \frac{K! (N-K)!}{N!}
\\[2mm]
&=&
  \sum_{K=k}^{N-n+k}
  \lambda_{N,K}   \cdot
\frac{(K-k+1) \cdots K}{(N-n+1)\cdots N}
\cdot{(N-n-(K-k)+1)\cdots (N-K)}
\\[2mm]
 &=& 
  \sum_{K=k}^{N-n+k}
  \lambda_{N,K} \cdot 
 \frac{(K-k+1)\cdots K}{(N-n+1)\cdots (N-n+k)}
 \cdot
 \frac{(N-K-n+k+1)\cdots (N-K)}{(N-n+k+1)\cdots N}
\\[3mm]
 &=&
  \sum_{K=k}^{N-n+k}
  \lambda_{N,K} \cdot 
  \prod_{i=1}^k \frac{K/N-(k-i)/N}{1-(n-i)/N}\,\,\,\cdot\,\,\,
    \prod_{j=1}^{n-k} \frac{1-K/N-(n-k-j)/N}{1-(n-k-j)/N}.
\end{eqnarray*}

\noindent
Since  $k=\possf(t) \notin\{0,n\}$ and 
$\lambda_{N,K}\leq 1$,  
%
then
\begin{eqnarray*}
0 &=&\lim_{N\to \infty}\,\,\sum_{K=0}^k
 \lambda_{N,K}({K}/{N})^k
\left(1-{K}/{N}\right)^{n-k}\\
{}&=&\lim_{N\to \infty}\sum_{K=N-n+k}^N
 \lambda_{N,K}\left(K/N\right)^k
\left(1-{K}/{N}\right)^{n-k}.
\end{eqnarray*}   
%
%
%
%
So our absurdum hypothesis equivalently states that  
there is
 $\epsilon>0$ together with a miniterm $t\in \mathsf F_n$
 such that
 $$
 \left| \sigma(t)-\sum_{K=k}^{N-n+k}
 \lambda_{N,K}\pi_{K/N}(t)\right|\geq\epsilon
$$
 for infinitely many  $N$.
For 
$ i=1,\dots,k$ and  $j=1,\dots, n-k)$ let the rationals $c_i$
and $d_j $ be defined by
$$
c_i=\left(1-\frac{k-i}{K}\right)/\left(1-\frac{n-i}{N}\right)
$$
and
$$
d_j=\left(1-\frac{n-k-j}{N-K}\right)/
\left(1-\frac{n-k-j}{N}\right).
$$
Then 
\begin{eqnarray*}
\epsilon 
&\leq&
  \sum_{K=k}^{N-n+k}
 \lambda_{N,K}
 \left|
   \prod_{i=1}^k \frac{K/N-(k-i)/N}{1-(n-i)/N}\cdot
    \prod_{j=1}^{n-k} \frac{1-K/N-(n-k-j)/N}{1-(n-k-j)/N}
    -  \pi_{K/N}(t) \right |\\
&\leq&
  \sum_{K=k}^{N-n+k}
  \lambda_{N,K}\cdot
 \left|
   \prod_{i=1}^k c_i \cdot {K/N}\cdot
    \prod_{j=1}^{n-k} d_j \cdot (1-K/N)
-
(K/N)^k(1-K/N)^{n-k} \right |\\
&=& \sum_{K=k}^{N-n+k}     \lambda_{N,K}\cdot 
(K/N)^k\cdot(1-K/N)^{n-k} \cdot
\left| 1- \prod_{i} c_i  \prod_{j} d_j\right |\\
&\leq&
\sum_{K=k}^{N-n+k}     \lambda_{N,K}\cdot 
(K/N)^k\cdot(1-K/N)^{n-k} \cdot
\left(1- \left(1-\frac{k}{K}\right)^k
\cdot\left(1-\frac{n-k}{N-K}\right)^{n-k}\right ),  
\end{eqnarray*}
because for all $i,j$ \,\,\, 
  $
  1-\frac{k}{K}\leq c_i\leq 1\,\,\,\mbox{  and  }\,\,\,
  1-\frac{n-k}{N-K}\leq d_j\leq 1.
  $  
%

 \medskip
Fix $\eta>0$. For all sufficiently large $N$ we have
$k< \lfloor \eta N\rfloor  < \lceil N-\eta N\rceil < N-n+k$, where
$\lfloor x \rfloor$  (resp., $\lceil x\rceil $) 
is the largest integer $\leq x$
(resp., the smallest integer $\geq x$). 
As a consequence,

\begin{eqnarray*}
\epsilon\,\, 
&\leq&\,\,
\sum_{K=k}^{\lfloor \eta N\rfloor}    \lambda_{N,K}\cdot 
(K/N)^k\cdot(1-K/N)^{n-k} \cdot
\left(1- \left(1-\frac{k}{K}\right)^k
\cdot\left(1-\frac{n-k}{N-K}\right)^{n-k}\right)
\\
&+&
\sum_{K=\lfloor \eta N\rfloor +1}^{\lceil N-\eta N\rceil}   
 \lambda_{N,K}\cdot 
(K/N)^k\cdot(1-K/N)^{n-k} \cdot
\left( 1- \left( 1-\frac{k}{K}\right)^k
\cdot\left(1-\frac{n-k}{N-K}\right)^{n-k}\right)
\\
&+&
\sum_{K={\lceil N-\eta N\rceil} +1}^{N-n+k}    \lambda_{N,K}\cdot 
(K/N)^k\cdot(1-K/N)^{n-k} \cdot
\left( 1- \left(1-\frac{k}{K}\right)^k
\cdot\left(1-\frac{n-k}{N-K}\right)^{n-k}\right)
\end{eqnarray*}
\begin{eqnarray*}
&\leq& \,\,
\sum_{K=k}^{\lfloor \eta N\rfloor }    
\lambda_{N,K}\cdot 
\left(\frac{\eta N}{N}\right)^k\cdot 1\cdot
1\\
&+&  
\sum_{K=\lfloor \eta N\rfloor +1}^{\lceil N-\eta N\rceil}    
 \lambda_{N,K}\cdot 
1 \cdot 1  \cdot
\left( 1- \left(1-\frac{k}{ \eta N}\right)^k\cdot
\left(1-\frac{n-k}{N-(N-\eta N)}\right)^{n-k}\right)
\\
&+&  
\sum_{K=\lceil N-\eta N\rceil+1}^{N-n+k}   
 \lambda_{N,K}\cdot 
1 \cdot\left (1-\frac{N-\eta N+1}{N}\right)^{n-k} \cdot
\,\,1\\
&\leq&\,\,  
\sum_{K=k}^{\lfloor \eta N\rfloor }    
\lambda_{N,K}\cdot 
\left[   \left(\frac{\eta N}{N}\right)^k\right]
+ \sum_{K=\lfloor \eta N\rfloor +1}^{\lceil N-\eta N\rceil}   
   \lambda_{N,K}
 \cdot
\left[ 1- \left(1-\frac{k}{\eta N}\right)^k\cdot
\left(1-\frac{n-k}{\eta N}\right)^{n-k}\right] 
 \\
&+&
\sum_{K=\lceil N-\eta N\rceil+1}^{N-n+k}   
 \lambda_{N,K} \cdot
  \left[\left (\eta-\frac{1}{N}\right)^{n-k} \right].
\end{eqnarray*}

 \smallskip
\noindent
Since $\epsilon$ is fixed
we may choose  $\eta>0$ so small  that
for all sufficiently large $N$,

\begin{eqnarray*}
\epsilon &\leq& \sum_{K=k}^{N-n+k}     \lambda_{N,K}\cdot 
\left(
\left(\frac{\eta N}{N}\right)^k +
\left( 1- \left(1-\frac{k}{\eta N}\right)^k\cdot
\left(1-\frac{n-k}{\eta N}\right)^{n-k}\right) +
\left(\eta-\frac{1}{N}\right)^{n-k} 
\right)\\
&\leq&
 \sum_{K=k}^{N-n+k}    
  \lambda_{N,K}\cdot \frac{\epsilon}{1000}
 \leq  \frac{\epsilon}{1000},
 \mbox{ which is impossible.}
\end{eqnarray*}

\smallskip
\noindent
{\it Case 2:} $\possf(t) \in \{0,n\}$.  
Then a routine simplification of the
 proof of Case 1 again yields a contradiction.

\medskip 
Having thus settled our claim, from
$\mathsf F_1\subseteq \mathsf F_{2}\subseteq\cdots$ it follows  that 
for every $\epsilon>0$ and $n=1,2,\dots,$ there is
$N>n$ together with  a convex combination
of product states of $\mathsf F_N$
which agrees with
$\sigma$ over all miniterms of $\mathsf F_n$ up to  
 an error
$<\epsilon$.
Every   set
$\{a_1,\dots,a_z\}\subseteq \mathsf F_\omega$
 is contained in some
finitely generated free boolean algebra 
 $\mathsf F_n$, and hence
each  $a_i$ is a disjunction
of  miniterms of   $\mathsf F_{n}.$
In conclusion,
\begin{quote}
{\it For all $\{a_1,\dots,a_z\}\subseteq 
\mathsf F_\omega$ and
  $\epsilon >0$ there is
a convex combination
of product states of $\mathsf F_{\omega}$
agreeing with  $\sigma$ over 
$\{a_1,\dots,a_z\}$   up to an error $<\epsilon$.}
\end{quote}
By definition
of the product topology of the
compact space
$\interval^{\mathsf F_\omega}$,
\,\,this amounts to saying that 
$\sigma$ belongs
to the closure of the
set of convex combinations  of product states of
 $\mathsf F_\omega$
in the vector space  
$\mathbb R^{\mathsf F_\omega}$ 
equipped with the product topology.  

\medskip
$(\Leftarrow)$-direction. Easy now.
\hfill{$\Box$}
 \end{proof}
%
%
%
 
\subsection{De Finetti's  formulation of the Exchangeability Theorem}
\label{section:original}
Fix an exchangeable state $\sigma$
of the  boolean algebra  $\mathsf F_\omega$ freely generated by
$X_1,X_2,\dots.$
The  proof of Theorem \ref{theorem:exchangeability}  
yields
 integers   $0 < N_1<N_2<\dots,$ and for each $i=1,2,\dots$
  real numbers
 $\lambda_{N_i,0},\dots,\lambda_{N_i,N_i}\geq 0$
 summing up to 1,  along with  
 Borel probability measures
 \index{regular Borel probability measure}
 \index{probability!Borel measure}
  \index{probability!regular Borel measure}      
  $\mu_{i}$ on  $\interval$ 
such that
 for any miniterm $t\in \mathsf F_\omega$  
 \begin{eqnarray*}
  \sigma(t)&=&\lim_{i\to \infty}
  \sum_{K=0}^{N_i}\lambda_{N_i,K}\,(K/N_i)^{\possf(t)}
  \,(1-K/N_i)^{{\negsf(t)}}\\
&=&  \lim_{i\to \infty}
 \sum_{K=0}^{N_i}
  \lambda_{N_i,K} \,\pi_{K/N_i}(t)\\
  &=&\lim_{i\to \infty} \int_{\interval}
 p^{\possf(t)}\,(1-p)^{\negsf(t)}\,
   {\rm d} \mu_i(p).
  \end{eqnarray*}


\index{exchangeability theorem!original formulation}
  \index{regular Borel probability measure}
 \index{probability!Borel measure}
  \index{probability!regular Borel measure}

\smallskip
\noindent
Elementary measure theory  
will now yield  the original formulation \cite{def-1930} of
 de Finetti's 
exchangeability  theorem  by letting $P([0,1])$ be the 
 compact metric space
  of Borel probability measures on
 $\interval$ equipped with the weak topology.
  %
 %
The sequential compactness of  $P([0,1])$ yields  
  a subsequence of the  $\mu_{i}$
converging to some 
$\mu\in P(\interval)$.   Thus    
      for every miniterm  $t\in \mathsf F_\omega$
 $$ 
 \sigma(t) 
=  \int_{\interval}
 p^{\possf(t)}\,(1-p)^{\negsf(t)}\,{\rm d} \mu(p) 
 =  \int_{\interval} \mathsf f_p(t)\,{\rm d} \mu(p) =
  \int_{\interval} \pi_p(t)\,{\rm d} \mu(p).
    $$
Intuitively: 
{\it Any  exchangeable sequence 
of $\{yes,no\}$-events
$X_1,X_2,\dots$  is a ``mixture"\footnote{i.e., a
 sort of a   ``weighted average''
computed  by an integral.} of 
sequences of independent and identically
 distributed (i.i.d.) Bernoulli random variables.}
 
  \index{event}
  \index{de Finetti!event}

\index{mixture of i.i.d. Bernoulli random variables}
\index{independent!identically distributed 
Bernoulli random variables}
\index{Bernoulli random variable}

\begin{remarks}
{\rm 
De Finetti proved
the  {\it existence} of $\mu$  
  in the Appendix
   of \cite[pp.124-133]{def-1930}  using  characteristic functions, 
   the tools available  to him when 
he communicated this result  in  the 1928
International Congress of Mathematicians in Bologna. 
   In all his subsequent papers  on exchangeability
   he abandoned this formalism and used  
distribution functions, probability measures, 
 moments, and laws of large numbers.   
  In  \cite[Chapter 4, \S 31, p.121]{def-1930},
  the free generating set  $\{X_1,X_2,\dots, X_n \} $
  of $ \mathsf F_n$   is said to constitute  a 
   ``class of equivalent events''. 
   
   \index{class of equivalent events}
   \index{de Finetti!equivalent events}
      \index{equivalent events}
      
   A comprehensive advanced account of 
   the ramifications of exchangeability is given
   by  D.J. Aldous in   
``Exchangeability and related topics''.
 In: \'Ecole d'  \'Et\'e de Probabilit\'es de
Saint-Flour XIII--1983. Lecture  Notes in Mathematics, 
  Vol. 1117, Springer,
Berlin,  (1985), pp. 1--198. 
%
Further extensions of de Finetti's theorem to quantum 
states have been applied in other research
areas, including  quantum information theory.
%
 

}
 \end{remarks}

 \bibliographystyle{plain}


\end{document}